\documentclass[11pt,reqno]{amsart}
\usepackage{amssymb,amsfonts,amsmath,bm}
\usepackage{hyperref}
\usepackage{psfrag,graphicx,color}
\usepackage{mathrsfs}
\usepackage{graphics}
\usepackage{subfigure}
\usepackage{enumerate,enumitem}
\usepackage{pgfplots}
\usepackage{cleveref}
\usepackage{cite}
\usepackage{diagbox}
\usepackage{hyperref}
\usepackage{changes} 

\usepackage{booktabs}
\usepackage{multirow,array,tabularx}

\usepackage[margin=1.1in, marginparwidth=1.1in, marginparsep=0.05in]{geometry}

\DeclareGraphicsExtensions{.eps}
\usepackage{amssymb,amsmath,graphicx,amsfonts,euscript,mathrsfs}
\usepackage{hyperref}
\usepackage{ulem}

\newtheorem{theorem}{Theorem}[section]
\newtheorem{lemma}{Lemma}[section]

\newtheorem{proposition}{Proposition}[section]
\newtheorem{remark}{Remark}[section]
\theoremstyle{definition}

\newtheorem{example}[theorem]{Example}

\usepackage{amsopn}

\DeclareMathOperator{\Span}{span}

\def\R{{\mathbb R}}

\numberwithin{equation}{section}
\numberwithin{figure}{section}

\newcounter{marnote}

\providecommand{\det}{\operatorname{det}} 

\providecommand{\det}{\operatorname{det}}

\newcommand{\diff}{{\rm d}}

\newcommand{\mcP}{\mathcal{P}}
\newcommand{\mcT}{\mathcal{T}}
\newcommand{\msC}{\mathsf{C}}
\newcommand{\vep}{\varepsilon}
\newcommand{\rd}{{\rm d}}
\newcommand{\dx}{{{\rm d}x}}
\newcommand{\set}[1]{\left\{ #1 \right\}}

\newcommand{\jump}[1]{\left[ #1 \right]}

\newcommand{\bbR}{\mathbb{R}}

\newcommand{\abs}[1]{\left\vert #1 \right\vert}
\newcommand{\bv}[1]{\big\vert #1 \big\vert}

\allowdisplaybreaks

\begin{document}

\title[FEM for the perfect conductivity and linear elastic]{High-order finite element method \\for perfect conductivity and linear elasticity \\with nearly touching inclusions} 

\author[]{Buyang Li}
\address{\hspace*{-12pt}Buyang Li: 
Department of Applied Mathematics, The Hong Kong Polytechnic University, Hung Hom, Hong Kong. {\it E-lail address}: {\tt buyang.li@polyu.edu.hk}}

\author[]{\,\,Haigang Li}
\address{\hspace*{-12pt}Haigang Li: School of Mathematical Sciences, Beijing Normal University, Laboratory of Mathematics and Complex Systems, Ministry of Education, Beijing 100875, People's Republic of China.
{\it Email address}: {\tt hgli@bnu.edu.cn}}

\author[]{\,\,Yonglin Li}
\address{\hspace*{-12pt}Yonglin Li:  
School of Mathematics and Statistics, Wuhan University, Wuhan 430072, People's Republic of China. {\it Email address}: {\tt yonglin.li@whu.edu.cn}}

\author[]{\,\,Peihao Zhang}
\address{\hspace*{-12pt}Peihao Zhang: School of Mathematics and Statistics, Henan University, Kaifeng 475004, People's Republic of China.
{\it Email address}: {\tt phzhang@henu.edu.cn}}

\subjclass[2010]{65N12; 
                 65N15; 
                 65N30; 
                 35B40  
                 }


\keywords{
  Perfect conductivity problem, 
  linear elasticity problem, 
  asymptotic estimates, 
  high-order finite element method, 
  error estimates
}

%
\begin{abstract}

In perfect conductivity and linear elasticity problems, the electric field and stress always become highly concentrated within narrow regions between adjacent perfect (rigid) inclusions, and blow up as the distance between inclusions approaches zero. The design of high-order numerical methods with rigorous error analysis for such concentration problems remains open. In this paper, we present the first high-order finite element method for solving these problems. Our approach is based on asymptotic estimates of high-order derivatives of solutions, employing a graded mesh and auxiliary basis functions specifically designed from these derivative estimates. We prove that the proposed method converges with an $H^1$-error bound of $O(h^p)$ and the error bound is independent of the distance (possibly approaching zero) between inclusions, where $h$ is the mesh size and $p$ is the degree of finite elements. Numerical examples in both two and three dimensions are presented to demonstrate the convergence rates of the numerical solutions. In particular, the blow-up behaviors of the gradients of solutions are demonstrated when the inclusions approach each other.
\end{abstract}

\maketitle



\section{Introduction}\label{sec:intr}
In high-contrast composite materials, when inclusions approach near-touching, physical fields such as stress or electric field  may become arbitrarily large within the narrow inter-inclusion regions. Precise characterization of this field concentration is critically important for both numerical simulation and partial differential equation theory. For fiber-reinforced composites containing densely packed elastic inclusions, stress concentration may induce material failure \cite{BASSL1999}. Similarly, conducting inclusions can generate significantly enhanced electric fields, enabling applications in subwavelength imaging and sensitive spectroscopy \cite{YuAmmari}.

For simplicity, we consider a bounded domain $D\subset\R^n$, $n\in\{2,3\}$, containing two smooth convex inclusions $D_1\subset D$ and $D_2\subset D$, which may be nearly touching, as shown in Figure \ref{figure:domain}. The voltage potential $u$ in  conductivity problem satisfies the boundary value problem
\begin{align}\label{PDE2}
  \begin{cases}
    \nabla\cdot\left(a_{k}(x) \nabla u\right)=0 & \text { in } D, \\ 
    u=\varphi& \text { on } \Gamma:=\partial D, 
  \end{cases}
\end{align} 
where $a_{k}(x)$ is the (renormalized) conductivity in different materials and defined by 
$$
  a_{k}(x)= 
  \begin{cases}
    k &\text{for}\,\, x\in D_{1} \cup D_{2} ,\\ 
    1 &\text{for}\,\, x\in \Omega:= D \backslash \overline{D_{1} \cup D_{2}}, 
  \end{cases}
$$
The equation \eqref{PDE2} is equivalent to finding the minimizer $u\in H^1(D)$ with $u|_{\Gamma} = \varphi$ of the energy functional 
\begin{align*}
  E_{k}[u]:=\frac{k}{2} \int_{D_{1} \cup D_{2}}|\nabla u|^{2}+\frac{1}{2} \int_{\Omega}|\nabla u|^{2} .
\end{align*}
It has been proved in \cite{AmmariKangLLZ2007,BV2000,LV2000,DongLo2019} that the electric field $\nabla u$ is bounded uniformly with respect to the distance $\varepsilon:={\rm dist}(D_1,D_2)$ provided that $k$ is away from $0$ and $\infty$.

However, when $k \to \infty$, the $L^{\infty}$-norm of $\nabla u$ becomes very large in the narrow region between two inclusions and blows up as $\varepsilon \to 0$. The limiting case $k\rightarrow\infty$ of this problem is known as the perfect conductivity problem, which characterizes the asymptotic behaviour with respect to $\varepsilon$. 
Investigating such asymptotic behaviour has been a long-standing interest in material science as well as computational mathematics; see \cite{AmmariBonnetierTV2015,AmmariKangLim2005,BASSL1999,BonnetierTriki2013,Dong2012,KLY,Keller1963,Kim2018,LV2000,Milton2002,McPhedran1986,MM1987,MPM1988} and the references therein. 
\begin{figure}[t]
  \centering
  \begin{tikzpicture}[scale=0.75, semithick]
    \draw (0, 0) circle(2.4) +(1.3, 1.5) node {\small$\Omega$};
    \draw (2.2, 0) node {\small$\Gamma$};
    \draw (0,  0.74) circle [x radius=1, y radius=0.7] node {\small$D_1$} +(1.25, 0) node {\small$\Gamma_1$};
    \draw (0, -0.74) circle [x radius=1, y radius=0.7] node {\small$D_2$} +(1.25, 0) node {\small$\Gamma_2$};
  \end{tikzpicture}
  \caption{The domain $\Omega=D\backslash \overline{D_1\cup D_2}$.}
  \label{figure:domain}
\end{figure}
To minimize the energy in this limiting case $k\rightarrow\infty$, the solution must satisfy $\nabla u = 0$ in each inclusion $D_j,\,j=1,2$.
By denoting 
$$
  H_{c}^{1}(\Omega)=\left\{v \in H^{1}(\Omega) : v = \text{constant on }\Gamma_j \text{ for }j=1,2\right\},
$$ 
the perfect conductivity problem can be written as finding $u \in H_c^1(\Omega)$ with $u|_\Gamma=\varphi$ that minimizes the energy functional
\begin{align*}
  E_{\infty}[u]:=\frac{1}{2} \int_{\Omega}|\nabla u|^{2}, 
\end{align*}
which can be rewritten by 
\begin{align}\label{PDE}
  \begin{cases}
    \Delta u=0 &\text {in}\,\, \Omega, \\ 
    u=c_{j} &\text {on}\,\, \Gamma_j:=\partial D_j,\, j=1,2, \\ 
    \int_{\Gamma_j} \partial_{\nu}u =0 & j=1,2, \\ 
    u=\varphi  &\text {on}\,\, \Gamma,
  \end{cases}
\end{align} 
where $c_1$ and $c_2$ are free constants to be determined by the condition 
$\int_{\Gamma_j} \partial_{\nu}u = 0$ with $j=1,2$, and $\partial_\nu$ denotes the outward normal derivative on $\Gamma_j$. 

The asymptotic estimates for the gradient of the solution to the perfect conductivity problem have been established for circular inclusions in \cite{KLY,KLY2,LWX2019} and for general convex smooth inclusions in \cite{Li20,LLY}. In particular, by denoting $x'=(x_1,\cdots,x_{n-1})$, if the boundaries of the two inclusions can be described (locally in a neighborhood of the origin) by the following two graphs: 
\begin{align}\label{local-geometry}
  {\tilde\Gamma_j}=\{x\in\R^n:x_n=\phi_j(x')\;\, \mbox{for}\;\,|x'| < 1\},\,\,\,j=1,2, 
\end{align}
with 
\begin{align}
  \phi_1(0)=-\phi_2(0)=\frac\varepsilon2,\; \nabla\phi_1(0)=\nabla\phi_2(0)=0 \;\;\mbox{and}\quad \rho\big(\nabla^2(\phi_1-\phi_2)(0)\big)\ge \lambda_0, \label{geo2}
\end{align} 
where $\rho(\cdot)$ denotes the smallest eigenvalue of a matrix and $\lambda_0$ is some fixed positive constant. The narrow domain between two inclusions is defined by
\begin{align}
  \Omega_{r}:=\{x=(x',x_n)\in\Omega: |x'|\le r,\; \phi_2(x') \leq x_n \leq \phi_1(x') \} \quad \mbox{for}\,\,\,r > 0. \label{def:Or}
\end{align} 
The vertical distance between $D_1$ and $D_2$ is given by $\delta(x'):=\phi_1(x')-\phi_2(x'),\ |x'| \leq r$. 
Applying Taylor's theorem, we have
\[
\delta(x') = \delta(0) + \nabla\delta(0)\cdot x' + \frac12 (x')^T \nabla^2\delta(0) x' + O(|x'|^3), \quad |x'|\le r.
\]
It follows from \eqref{geo2} that, there exists a constant $C_0>0$ such that
\begin{equation}\label{djx}
   C_0^{-1}(\varepsilon + |x'|^2)\leq \delta(x')\leq C_0(\varepsilon + |x'|^2).
\end{equation} 

Some integral equation methods and expansion methods have proven effective in computing solutions to conductivity and linear elasticity problems in composite materials under various settings; see the fast-multipole integral equation methods \cite{GM1994}, the fast-multipole iterative schemes \cite{Helsing1995,Helsing1996}, the method of images \cite{CG1997,CG1998}, the boundary integral method (for acoustic modelling of high-contrast media) \cite{whgbs22}, and the hybrid basis scheme \cite{CTD2016}, which specifically addresses the challenge posed by nearly touching disk-shaped inclusions in two dimensions. 
These methods mainly focus on the two-dimensional conductivity problem, involving either mildly close inclusions of general geometry or nearly touching disk-shaped inclusions. A spectral Galerkin approximation of an integral equation formulation was proposed in \cite{LSPMSB-2018} for spherical inclusions in three dimensions. The method has spectral convergence for smooth solutions, while the error analysis for close-to-touching inclusions (when the solution is asymptotically singular) still remains open. Rigorous error estimates of the integral equation methods for the asymptotical singular solutions of the perfect conductivity problem, with close-to-touching inclusions of general geometry, still remain open in three dimensions.

The FEM is a widely used numerical technique for solving partial differential equations in domains with complex geometries. However, the existing error analyses of FEM do not address the perfect conductivity problem with close-to-touching inclusions. The FEMs for the conductivity problem with bounded $k$ and large $\vep$ have been well studied in the literature. For example, optimal-order $L^2$ and $H^1$ error estimates for the corresponding elliptic interface problems were established in \cite{babuvska1970finite,chen1998finite,huang2002mortar,sinha2006priori, sinha2009finite,li2010optimal}. 
For perfect conductivity problems (i.e., $k\to\infty$) with possibly close-to-touching inclusions, the first rigorous error estimate of linear FEM uniformly with respect to $\varepsilon$ was established in \cite{LBLH}, where graded meshes and singular basis are designed based on asymptotic estimates of the solution as $\vep \to 0$. These techniques lead to an $H^1$ error bound of $O(h)$ independent of $\vep$. However, the special basis constructed in the paper is limited to low-order convergence, even if faster mesh refinement is used.
Graded meshes are frequently employed in FEM to improve the accuracy of solutions to problems with point singularities \cite{btw22,fm17,Kopteva19}. In contrast, the singularity in the present problem arises from the $l$-th derivative of the solution in the narrow region between inclusions, which scales as $\varepsilon^{-l/2}$ (in two dimensions) and then blows up as $\varepsilon \to 0$. To obtain $\varepsilon$-independent error estimates, the specially designed graded meshes \cite{LBLH} are introduced in this paper, together with  asymptotic estimates of high-order derivatives of solution in the narrow region, providing an effective approach to handle the singular behavior. An alternative way for treating the singularities is the adaptive FEM \cite{cxz09}; however, $\varepsilon$-independent error estimates were not established either.

In this paper, we design high-order convergent methods for the problem through rigorous error analysis, for which a critical component is the derivation of asymptotic estimates for high-order partial derivatives of the solution --- results which are of independent interest and have not been established yet. To fully resolve the singularities in these high-order derivatives, we introduce a set of carefully constructed auxiliary basis functions, which are more intricate than their linear-element counterparts, providing high-order approximations to the solution. The main results of this paper, i.e., the asymptotic regularity estimates of the solution and the design of high-order FEM based on the asymptotic regularity estimates, are presented in the following two sections. 
The proof of the error estimate for the proposed FEM is presented in Section \ref{sec:errorest}. Section \ref{sec:tests} provides numerous numerical examples validating the theoretical results. The proof of the asymptotic estimates of solution is deferred to Section \ref{sec:highorderest}. 
Appendix \ref{app:w_l} contains some lengthy and technical proofs, and an example of explicit construction of auxiliary functions in Appendix \ref{app:aux}, for ease of reference.

\section{Asymptotic regularity estimates}\label{section:regularity}

In this section, we construct auxiliary functions which can capture all the singularities of the solution to \eqref{PDE}. The auxiliary functions are used to establish asymptotic estimates of high-order partial derivatives of the solution, and to design numerical method with high-order convergence and error bound independent of $\varepsilon$. 

For the simplicity of notation, we denote by $C$ a generic positive constant which may be different at each occurrence but is always independent of the distance parameter $\varepsilon$ and the mesh size $h$ of FEM. 
We also denote by $A\lesssim B$ the statement ``$A\le CB$ for some constant $C$'', and denote by $A\sim B$ the statement ``$C^{-1}B\le A\le CB$ for some constant $C$''.

\subsection{Decomposition of the solution}\label{sec:decomp}

We consider the following Dirichlet boundary value problems:
\begin{equation}\label{equ-v1}
\begin{cases}
  \Delta v_{1}=0&\mbox{in}~\Omega,\\
  v_{1}=1&\mbox{on}~ \Gamma_1,\\
  v_{1}=0&\mbox{on}~\Gamma\cup\Gamma_2,
\end{cases}
\qquad\mbox{and}\qquad 
\begin{cases}
      \Delta v_{b}=0&\mbox{in}~\Omega,\\
      v_{b}=c_{2}&\mbox{on}~\Gamma_1\cup\Gamma_2,\\
      v_{b}=\varphi&\mbox{on}~\Gamma.
\end{cases}
\end{equation}
Proceeding to the solution structure, we decompose the solution of \eqref{PDE} as 
\begin{equation}\label{decom}
u(x',x_n)=(c_{1}-c_{2}) v_{1}(x',x_n)+v_{b}(x',x_n) \quad\mbox{for}~x\in\Omega.
\end{equation}
By standard elliptic regularity theory, $\|\nabla^lu\|_{L^\infty(\Omega\setminus\Omega_{1/4})}$ is bounded for all $l\ge1$. Consequently, it suffices to analyze the singular behavior of $\nabla^lu$ in $\Omega_{1/4}$. For the difference $(c_{1}-c_{2})$, it has been proved in \cite[Propositions 2.2, 2.3]{BLY} that 
\begin{equation}\label{c1c2est}
    |c_1-c_2|\lesssim \sqrt{\varepsilon}\quad \text{for}~n=2,\quad |c_1-c_2|\lesssim |\log\varepsilon|^{-1}\quad \text{for}~n=3.
\end{equation}
While, for $v_{b}$, since $v_{b}=c_{2}$ on both $\Gamma_1$ and $\Gamma_2$, indicating no potential difference, \cite[Theorem 1.1]{LLBY} implies that for any $k,t \geq 0$, there exists a constant $\mu \in (0,1)$ such that
\begin{equation}\label{W2infty-vb}
  \big|{\nabla^{k}(v_{b}(x)-c_2)}\big|
  \lesssim \mu^{\frac{1}{\sqrt{\delta(x')}}}
  \lesssim \delta(x')^{t} 
  \lesssim 1 \quad \text{for }x \in \Omega_{1/2}.
\end{equation}

Based upon these estimates, we construct in the next subsection an auxiliary function $\tilde v_l$ in $\Omega_{1/4}$ to capture the singular behavior of the $l$-th order derivatives of $v_1$ defined in \eqref{equ-v1}. This construction will subsequently enable rigorous asymptotic regularity analysis and facilitate the design of high-accuracy numerical methods for the problem.

\subsection{Construction of auxiliary functions (to capture the singularities of $\nabla^{l}v_1$, $l\ge 1$)}\label{sec:aux}

It has been shown in \cite{LBLH} that the function
\begin{equation}\label{def-bar-v1}
	\bar{v}_{1}(x',x_n):=\frac{x_n-\phi_{2}(x')}{\delta(x')} \quad\text{for}~x=(x',x_n)\in\Omega_{1/2} , 
\end{equation}
satisfies that $|\nabla({v}_{1} -\bar{v}_{1})|\lesssim 1$ in $\Omega_{1/4}$, thereby capturing first-order derivative singularities of $v_1$. To extend this idea to high-order singularities, we define $f_1(x',x_n) :=-\Delta\bar{v}_1=-\Delta_{x'}\bar{v}_1$ and iteratively construct: for $k \geq 2$ and $x \in \Omega_{1/2}$,
\begin{align}
{\bar v}_k(x',x_n) &:=-\int_{\phi_{2}(x')}^{\phi_{1}(x')} G(y,x_n) f_{k-1}(x',y) dy, \label{eq:vk}\\
f_k(x',x_n) &:=f_{k-1}(x',x_n)-\Delta\bar{v}_k, \label{eq:fk}
\end{align}
where $G(y,x_n)$ is the Green's function for $-\partial_{x_nx_n}$: 
\begin{align}\label{defgreenfunction3d}
	G(y,x_n)=\frac{1}{\delta(x')}
  \begin{cases}
		(\phi_{1}(x')-x_n)(y-\phi_{2}(x')), \quad \phi_{2}(x')\le y\le x_n,\\
		(\phi_{1}(x')-y)(x_n-\phi_{2}(x')), \quad x_n\le y\le \phi_{1}(x') .
	\end{cases}
\end{align}
The definition \eqref{eq:vk} directly yields boundary properties
\begin{equation}\label{scode}
	{\bar v}_k(x',x_n)=0 \quad \text{on}~ \Gamma^+_{1/2}\cup\Gamma^-_{1/2} 
\end{equation} 
and $\partial_{x_nx_n}{\bar v}_{k}(x',x_n)=f_{k-1}(x',x_n)$, which, combined with \eqref{eq:fk}, implies
\begin{equation}\label{scfest}
	f_k(x',x_n)=-\Delta_{x'}\bar{v}_k \quad \text{in}~\Omega_{1/2}.
\end{equation}
The composite auxiliary function is then defined as
\begin{align}\label{tvl}
\tilde v_l(x',x_n) := \sum_{k=1}^{l} {\bar v}_k(x',x_n) , 
\end{align}
satisfying the following relation
\begin{align}\label{eq:tvl}
  -\Delta \tilde v_l = f_l = -\Delta_{x'}\bar{v}_l(x',x_n) \quad\text{in }\Omega_{1/2}; \quad \tilde v_l = \bar v_1 = v_1 \quad \text{on }\Gamma^\pm_{1/2}.
\end{align}

The following proposition establishes that $\tilde v_l$ captures the $l$-th-order derivative singularities of $v_1$, and its proof is postponed to Section \ref{sec:highorderest} due to its technical nature.

\begin{proposition}\label{thm:v1est}
Let $D$ be a smooth domain and let $\varphi\in C^{l,\gamma}(\Gamma)$ for some integer $l\ge 1$ and $0< \gamma<1$. Then, for convex and smooth inclusions $D_1$ and $D_2$ satisfying the conditions in \eqref{local-geometry}--\eqref{geo2}, the solution of \eqref{equ-v1} satisfies the following estimates: for $x \in \Omega_{1/4}$,
	\begin{alignat}{2}
		&|\nabla^{l}{v}_{1}(x) | + |\nabla^l {\tilde v}_q(x)| \lesssim \delta(x')^{-\frac{l+1}{2}} & \quad &\text{for}~\, q \ge 1, \label{est:v1} \\
    &|\nabla^{s}({v}_{1}-\tilde{v}_l)(x)| \lesssim \delta(x')^{l-s} &&\text{for}~\, 0 \leq s \leq l. \label{est:v1-tv1}
	\end{alignat}
\end{proposition}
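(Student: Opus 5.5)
The plan has two parts: explicit derivative bounds for the auxiliary functions $\bar v_k$, and an energy/rescaling argument for the remainder $w_l:=v_1-\tilde v_l$.

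\textbf{Step 1: derivative bounds for $\bar v_k$ and $f_k$.} By induction on $k$ I will show that, in $\Omega_{1/2}$,
\begin{equation*}
|\nabla^{s}\bar v_k|\lesssim \delta(x')^{k-1-\frac{s}{2}}\ \text{ for } s\ge 0,\qquad |\nabla^{s} f_k|\lesssim \delta(x')^{k-1-\frac{s}{2}}\ \text{ for } s\ge0 .
\end{equation*}
The base case is explicit, using $|\nabla_{x'}^j\phi_i|\lesssim |x'|^{\max(2-j,0)}$ and $|x'|\lesssim\delta^{1/2}$ from \eqref{djx}. In particular each $x'$-derivative of $\delta$ costs a factor $\delta^{-1/2}$, and so does each $x_n$-derivative. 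From these one gets $|\nabla^s\bar v_1|\lesssim\delta^{-s/2}$ and $|\nabla^s f_1|\lesssim \delta^{-1-s/2}$.

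For the inductive step, change variables $y=\phi_2+t\delta$ with $t\in[0,1]$. Then \eqref{eq:vk} becomes $\bar v_k=-\delta^2\int_0^1 g(t,\tau)f_{k-1}(x',\phi_2+t\delta)\,dt$ with $\tau=(x_n-\phi_2)/\delta$ and $g$ a fixed kernel. Differentiating, each derivative again costs at most $\delta^{-1/2}$, which gives $|\nabla^s\bar v_k|\lesssim \delta^{2}\cdot\delta^{(k-2)-1-s/2}$. Finally $f_k=-\Delta_{x'}\bar v_k$ loses a factor $\delta^{-1}$.

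Rather than tracking every term, I will phrase this through a class of functions of the form $\delta^{m}P(x',\tau)$, where $P$ is smooth with $\partial_{x'}$-derivatives scaling like $|x'|$-weights. This class is stable under the operations above. In particular $|\nabla^l\tilde v_q|\lesssim\delta^{-l/2}\le\delta^{-(l+1)/2}$, and $|f_l|\lesssim\delta^{l-2}$ with $|\nabla^s f_l|\lesssim\delta^{l-2-s/2}$.

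\textbf{Step 2: estimate the remainder $w_l$.} By \eqref{eq:tvl}, $w_l$ satisfies $-\Delta w_l=-f_l$ in $\Omega_{1/2}$ and $w_l=0$ on $\Gamma^\pm_{1/2}$. I follow the scheme of \cite{LBLH,LLBY}:

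(a) A global energy bound. Take a cutoff of $\tilde v_l$ near $\Omega_{1/2}$ and use the known bound $\|\nabla(v_1-\bar v_1)\|_{L^\infty(\Omega_{1/4})}\lesssim1$ to obtain $\int_{\Omega_{1/2}}|\nabla w_l|^2\lesssim 1$.

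(b) Local energy on thin cylinders. On $\Omega_s(z')=\{|x'-z'|<s\}$, apply the Caccioppoli-type iteration with Poincaré's inequality in $x_n$ (thickness $\sim\delta(z')$ for $s\lesssim\sqrt{\delta(z')}$). This gives
\[
\int_{\Omega_{\sqrt{\delta(z')}}(z')}|\nabla w_l|^2\lesssim \delta(z')^{n}\,\|f_l\|^2_{L^\infty}\cdot\delta(z')^{\,2}\lesssim \delta(z')^{n+2(l-1)} .
\]

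(c) Rescaling. Map $\Omega_{\delta(z')}(z')$ to a unit-size domain via $x\mapsto (x-z)/\delta(z')$. The rescaled boundaries are uniformly smooth, because $\nabla\phi_i$ is small on this scale. Apply interior and boundary Schauder/$W^{s,p}$ estimates up to order $s\le l$, using the Dirichlet condition and the bounds on $\nabla^{s}f_l$. Scaling back then yields $|\nabla^s w_l(x)|\lesssim\delta^{l-s}$.

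The bound for $\nabla^l v_1$ in \eqref{est:v1} follows from $\nabla^l v_1=\nabla^l\tilde v_l+\nabla^l w_l$.

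\textbf{Main obstacle.} The delicate point is the iteration in (b)--(c). The right-hand side $f_l$ is only $O(\delta^{l-2})$ in sup norm, and its higher derivatives are worse, so each order of derivative must be balanced against powers of $\delta$. A $\delta$-scale rescaling alone loses exactly $\delta^{-1}$ per derivative, which is consistent with $\delta^{l-s}$ only if the $L^\infty$ bound on $w_l$ itself is $O(\delta^l)$.

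To get that, I will first prove $|w_l|\lesssim\delta^{l}$ by a maximum-principle/barrier argument in $\Omega_{1/4}$. The barrier is $C\delta^{l}$ corrected by a term of the type $\bar v_{l+1}$, which solves $\partial_{x_nx_n}\bar v_{l+1}=f_l$ with size $\delta^{l}$. The barrier handles the mismatch $\Delta_{x'}$ of order $\delta^{l-1}$ via one more step of the construction, so effectively I compare $w_l$ with $\bar v_{l+1}$ plus $w_{l+1}$. If the barrier is awkward, I will fall back on the energy iteration with $l$ replaced by $l+1$, using Step 1 for $\bar v_{l+1}$. After this, the rescaled Schauder estimates give all the bounds.
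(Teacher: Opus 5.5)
\textbf{Verdict: genuine gap in Step 1.} Step 2 of your plan is essentially the paper's argument. It uses a global energy bound, then a Caccioppoli iteration on thin cylinders with Poincar\'e in $x_n$, then a $\delta(z')$-rescaling with boundary $W^{k,p}$ estimates, as in Lemmas \ref{jbnl} and \ref{w1infty}. Getting the global energy bound from the known $|\nabla(v_1-\bar v_1)|\lesssim 1$ plus $|\nabla\bar v_k|\lesssim 1$ for $k\ge 2$ is a harmless alternative to the paper's maximum-principle/cutoff argument. The problem is Step 1, which rests on a false scaling rule.

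\textbf{The scaling rule in Step 1 is wrong.} An $x_n$-derivative costs $\delta^{-1}$, not $\delta^{-1/2}$. Already $\partial_{x_n}\bar v_1=1/\delta(x')$. Your own Green's-function representation gives $\partial_{x_n}^2\bar v_k=f_{k-1}$, which is of order $\delta^{k-3}$, while $\bar v_k$ is of order $\delta^{k-1}$. Consequently the following claims are all false: $|\nabla^s\bar v_1|\lesssim\delta^{-s/2}$, $|\nabla^l\tilde v_q|\lesssim\delta^{-l/2}$, and $|\nabla^s f_l|\lesssim\delta^{l-2-s/2}$. Two checks:
\begin{itemize}
\item $\partial_{x_n}\tilde v_q=\delta^{-1}+O(1)$, consistent with the lower bound $|\nabla v_1|\gtrsim\delta^{-1}$ in Appendix \ref{app:aux}.
\item With $\delta=\varepsilon+|x'|^2$, one has $|\partial_{x_n}f_1|=|\Delta_{x'}(1/\delta)|=2(n-1)\varepsilon^{-2}$ at $x'=0$.
\end{itemize}

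\textbf{The correct bookkeeping needs one more idea.} The bound is anisotropic, $|\nabla_{x'}^j\partial_{x_n}^s\bar v_k|\lesssim\delta^{k-1-s-j/2}$, but this alone is not enough. If $P$ in your class $\delta^mP(x',\tau)$ were merely smooth in $\tau$, you would only get $|\partial_{x_n}^l\bar v_1|\lesssim\delta^{-l}$. The missing idea is that $\bar v_k$ is a polynomial of degree $2k-1$ in $x_n$. This holds because $\partial_{x_n}^2\bar v_k=f_{k-1}=-\Delta_{x'}\bar v_{k-1}$ and $\Delta_{x'}$ does not raise the $x_n$-degree. Hence $\partial_{x_n}^s\bar v_k\equiv 0$ for $s\ge 2k$. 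Maximizing the exponent's magnitude, i.e.\ minimizing $k-1-s-(l-s)/2$ over $s\le\min(l,2k-1)$, then gives exactly $|\nabla^l\bar v_k|\lesssim\delta^{-(l+1)/2}$. This is where the exponent in \eqref{est:v1} comes from (cf.\ \eqref{scxydest2}--\eqref{zx2dv1l1}). With this fix, the correct bound $|\nabla^m f_l|\lesssim\delta^{l-2-m}$ is precisely what Step 2(c) needs, because the rescaling weights $\nabla^m f_l$ by $\delta^{2+m}$.

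\textbf{Your ``main obstacle'' is not an obstacle.} Step (b) gives $\int|\nabla w_l|^2\lesssim\delta(z')^{2l+n-2}$. This holds on the cylinder of radius $\delta(z')$, not $\sqrt{\delta(z')}$ as you wrote. The larger cylinder has volume of order $\delta^{(n+1)/2}$, so the bound cannot hold there. Combined with Poincar\'e in $x_n$, this energy bound already gives $\|W\|_{L^2(Q_1)}\lesssim\delta(z')^{l}$ for the rescaled function. That averaged smallness is all the local $W^{k,p}$ estimate needs to produce $|\nabla^s w_l|\lesssim\delta^{l-s}$, and it is exactly how the paper closes the argument. No pointwise bound $|w_l|\lesssim\delta^l$, barrier, or passage from $l$ to $l+1$ is needed, so that part of your plan can be dropped.
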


\begin{remark}
  For some special domains, the auxiliary functions $\tilde v_l=\sum_{k=1}^l \bar v_k$ can be constructed explicitly. 
  In particular, to facilitate direct reference by readers, an example of explicit auxiliary functions for high-order derivatives is provided in SM2 in the supplementary material.
\end{remark}

\subsection{Regularity estimates (with explicit dependence on $\varepsilon$)}\label{subsec:regularity}
We substitute Proposition \ref{thm:v1est} into decomposition \eqref{decom} and incorporate estimates \eqref{djx}, \eqref{c1c2est}, and \eqref{W2infty-vb} to obtain the following theorem on the regularity of the solution $u$, which is one of the main results of this paper. 

\begin{theorem}\label{thm_highorder}
Under the assumptions of Proposition \ref{thm:v1est}, 
the solution to \eqref{PDE} satisfies, for all $x\in\Omega_{1/4}$ and $l\ge 1$, that
\begin{align}\label{regularity}
    |\nabla^{l} u| \lesssim 
    \left\{
  \begin{aligned}
      \displaystyle
      &\frac{1}{(\varepsilon+|x_1|^2)^{\frac l 2}}&&\mbox{if}\,\,\,n=2,\\[2pt]
      \displaystyle
      &\frac{1}{|\log\varepsilon|(\varepsilon+|x'|^2)^{\frac{l+1}{2}} }
      &&\mbox{if}\,\,\,n=3,
  \end{aligned}\right.
\end{align}
and
\begin{align}\label{regularity2}
\| u - (c_{1}-c_{2}) \tilde v_{l} - c_2 \|_{W^{l,\infty}(\Omega_{1/4})}\lesssim 1. 
\end{align}
\end{theorem}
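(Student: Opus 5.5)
The plan is to substitute the decomposition \eqref{decom} into both estimates and bound each piece using results already stated. Since $v_b-c_2$ and $c_2$ are harmless, we write
\[
u-c_2=(c_1-c_2)v_1+(v_b-c_2)\quad\text{in }\Omega .
\]
By \eqref{W2infty-vb}, every derivative $\nabla^k(v_b-c_2)$ is bounded by $\delta(x')^t$ for arbitrary $t\ge0$ in $\Omega_{1/2}$. In particular it is uniformly bounded and negligible compared with the claimed right-hand sides. So everything reduces to estimating $(c_1-c_2)\nabla^l v_1$ in the first claim, and $(c_1-c_2)(v_1-\tilde v_l)$ in the second.

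For \eqref{regularity}, combine \eqref{est:v1}, namely $|\nabla^l v_1|\lesssim \delta(x')^{-(l+1)/2}$, with \eqref{c1c2est} and the two-sided bound \eqref{djx}.
\begin{itemize}
\item When $n=2$, we get $|c_1-c_2|\,|\nabla^l v_1|\lesssim \sqrt{\varepsilon}\,(\varepsilon+|x_1|^2)^{-(l+1)/2}$. Since $\sqrt{\varepsilon}\le(\varepsilon+|x_1|^2)^{1/2}$, this is bounded by $(\varepsilon+|x_1|^2)^{-l/2}$.
\item When $n=3$, the factor $|\log\varepsilon|^{-1}$ simply multiplies $(\varepsilon+|x'|^2)^{-(l+1)/2}$, which is exactly the claimed bound.
\end{itemize}
In both cases the remaining bounded term from $v_b$ is absorbed, because the right-hand sides are bounded below by a positive constant on $\Omega_{1/4}$. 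This uses $\varepsilon+|x'|^2\lesssim1$ there and $|\log\varepsilon|\gtrsim$ const for small $\varepsilon$; for $\varepsilon$ away from zero, standard regularity gives the bound directly.

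For \eqref{regularity2}, write
\[
u-(c_1-c_2)\tilde v_l-c_2=(c_1-c_2)(v_1-\tilde v_l)+(v_b-c_2).
\]
The $W^{l,\infty}$ norm involves derivatives of orders $s=0,\dots,l$. By \eqref{est:v1-tv1}, $|\nabla^s(v_1-\tilde v_l)|\lesssim\delta(x')^{l-s}\lesssim1$, because $\delta\lesssim1$ on $\Omega_{1/4}$ and $l-s\ge0$. Together with $|c_1-c_2|\lesssim1$ from \eqref{c1c2est}, the first term is bounded. The second term is bounded by \eqref{W2infty-vb} with $t=0$.

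No step is a genuine obstacle, since all the analytic difficulty sits in Proposition \ref{thm:v1est}. The only points needing care are the $n=2$ bookkeeping, where the factor $\sqrt\varepsilon$ cancels half a power of $\delta$, and checking that the lower-order and $v_b$ contributions are dominated by the stated singular bounds, uniformly in $\varepsilon$.
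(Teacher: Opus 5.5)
Your route is the paper's own: it derives the theorem in one line by inserting Proposition~\ref{thm:v1est} into \eqref{decom} together with \eqref{djx}, \eqref{c1c2est} and \eqref{W2infty-vb}. Your $n=2$ bookkeeping and your proof of \eqref{regularity2} are correct.

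The step that fails is absorbing the $v_b$ term when $n=3$. To absorb an $O(1)$ quantity you need $|\log\varepsilon|^{-1}(\varepsilon+|x'|^2)^{-(l+1)/2}\gtrsim 1$ on all of $\Omega_{1/4}$. That requires an \emph{upper} bound on $|\log\varepsilon|$; the fact $|\log\varepsilon|\gtrsim 1$ points the wrong way. At points with $|x'|\sim 1/4$ the right-hand side is $\sim|\log\varepsilon|^{-1}\to 0$. Meanwhile \eqref{W2infty-vb} only gives $|\nabla^l(v_b-c_2)|\lesssim \mu^{1/\sqrt{\delta(x')}}$, which is an $\varepsilon$-independent constant there.

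This is not merely a weak estimate. Take two unit balls symmetric about $x_3=0$ inside a ball $D$ centred at the origin, with $\varphi=x_1$. Oddness in $x_1$ forces $c_1=c_2=0$, so $u=v_b$. Convergence to the nontrivial touching-case solution, plus unique continuation, shows $|\nabla u|$ stays bounded below at some fixed point with $1/8<|x'|<1/4$ as $\varepsilon\to0$. So the three-dimensional bound as written cannot hold uniformly on $\Omega_{1/4}$, and the paper's one-line derivation passes over the same point.

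What your argument does establish for $n=3$ is
\[
|\nabla^l u(x)|\lesssim \frac{1}{|\log\varepsilon|(\varepsilon+|x'|^2)^{\frac{l+1}{2}}}+1,\qquad x\in\Omega_{1/4}.
\]
This weaker form is what the error analysis in Section~\ref{sec:errorest} actually uses. There only $\delta(x')^{-(p+2)/2}$ enters, without the logarithmic factor, and $\delta(x')^{-(p+2)/2}+1\lesssim\delta(x')^{-(p+2)/2}$ on $\Omega_{1/4}$.

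If you want the form without the $+1$, use the full decay $\mu^{1/\sqrt{\delta(x')}}$ in \eqref{W2infty-vb} rather than mere boundedness. It yields the stated bound on the smaller region where $\mu^{1/\sqrt{\delta(x')}}\le|\log\varepsilon|^{-1}$, e.g.\ $|x'|\lesssim(\log|\log\varepsilon|)^{-1}$.
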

For $l =1, 2$, the estimates in \eqref{regularity}--\eqref{regularity2} are consistent with those in \cite{Li20,LLY}. Thereby, Theorem \ref{thm_highorder} extends the asymptotic estimates in \cite{Li20,LLY} to arbitrary high-order partial derivatives. Recently, the analogue estimates for the Lam\'{e} system has been established in \cite{DLTZ} with the same pointwise upper bounds as in \eqref{regularity}.

The regularity estimates in \eqref{regularity}, combined with the auxiliary function $\tilde v_l$ and its property  \eqref{regularity2}, will be instrumental in the next section for designing a high-order $\varepsilon$-independent convergent FEM for problem \eqref{PDE}.

\section{Design of high-order FEM}\label{sec:FEM}
In this section, we design graded meshes and auxiliary basis functions, as well as finite element spaces and corresponding FEMs  for the perfect conductivity problem. Extension to the linear elasticity problem is also discussed.  

\subsection{Graded mesh}\label{sec:mesh}

\begin{figure}[htbp]
  \centering
  \begin{tikzpicture}[scale=4.5, semithick]
    \clip (-1,-0.3) rectangle (1,0.3);
    \draw (0,  0.75) circle [x radius=1, y radius=0.7];
    \draw (0, -0.75) circle [x radius=1, y radius=0.7];
    \draw (-0.7,-0.25) -- (-0.7,0.25); 
    \draw (0.7,-0.25) -- (0.7,0.25);
    \draw (-0.8,0) node {\small$\Omega_0$};
    \draw (0.8,0) node {\small$\Omega_0$};
    \draw (-0.4,0.16) node {\small$\Gamma_1$};
    \draw (0.4,-0.16) node {\small$\Gamma_2$};
    \draw (-0.2,-0.06) -- (-0.2,0.06);
    \draw (0.2,-0.06) -- (0.2,0.06); 
    \draw (0,0) node {\small$\Omega_*$};
    \draw (0,-0.25) node {\small$|x'| < 1$};
    \draw [dashed] (-0.7,-0.25) -- (-0.15,-0.25);
    \draw [dashed] (0.7,-0.25) -- (0.15,-0.25);
  \end{tikzpicture}
  \caption{The partition in the region between two inclusions.}
  \vspace{5pt}
\end{figure}

Recalling the definition of $\Omega_r$ in \eqref{def:Or}, and without loss of generality, we let $\Omega_0 := \Omega\setminus\overline{\Omega_1}$ denote the subdomain that is triangulated with uniform mesh size $h>0$. 
To design the graded meshes, we introduce two parameters $\alpha > 0$ and $\kappa \geq 1$, where $\alpha$ represents the rate of mesh refinement, and $\kappa=O(1)$ is a fixed constant which represents the minimum number of triangles/tetrahedra along the $x_n$-axis in the narrow region. 

For given $\varepsilon$ and $h$, we divide the problem into two cases according to whether $\varepsilon \geq (\kappa h)^{\frac{1}{1-\alpha/2}}$ or $\varepsilon < (\kappa h)^{\frac{1}{1-\alpha/2}}$. 

{\bf Case 1:} 
  $\varepsilon \geq (\kappa h)^{\frac{1}{1-\alpha/2}}$. In this case, we define 
  \[
    \Omega_* := \{x \in \Omega_1 : |x'| \leq \varepsilon^{1/2}\}.
  \]
  We triangulate the entire domain $\Omega$ by isoparametric simplicial elements of degree $q$ (cf. \cite{lenoir86}) and with local mesh size 
  \begin{align}\label{eq:hbar1}
    \hbar(x) = 
    \begin{cases}
      h,  & \text{if } x \in \Omega_0, \\
      \varepsilon^{\alpha/2} h,  & \text{if } x \in \Omega_*, \\
      |x'|^\alpha h,      & \text{otherwise.} 
    \end{cases}
  \end{align}
  Denote by $\mcT_h$ the triangulation of $\Omega$ and by $\Omega_h$ the triangulated approximate region, i.e.,
  \[
    \Omega_h = \text{interior of } \bigcup\nolimits_{\hat K\in \mcT_h} \hat K. 
  \]
According to \cite[Section 5.1]{lenoir86}, there exists a one-to-one Lipschitz continuous map $\Phi_h: \overline{\Omega_h} \rightarrow \overline{\Omega}$. We denote by $\Gamma_h = \Phi_h^{-1}(\Gamma)$ and $\Gamma_{j,h} = \Phi_h^{-1}(\Gamma_j)$ the isoparametric approximations to $\Gamma$ and $\Gamma_j=\partial D_j$, respectively. 

{\bf Case 2:} 
$\varepsilon < (\kappa h)^{\frac{1}{1-\alpha/2}}$. In this case, we define 
  \begin{align}\label{def-Omega-s}
    \Omega_* := \{x \in \Omega_1 : |x'| \leq (\kappa h)^{\frac{1}{2-\alpha}}\}
  \end{align}
and triangulate only the subdomain $\Omega_*^c = \Omega\setminus\overline{\Omega_*}$ by isoparametric simplicial elements of degree $q$ and with local mesh size 
  \begin{align}\label{eq:hbar2}
    \hbar(x) = 
    \begin{cases}
      h,      & \text{if } x \in \Omega_0, \\
      |x'|^\alpha h,      & \text{if } x \in \Omega_*^c\setminus \overline{\Omega_0}. 
    \end{cases}
  \end{align}
  Denote by $\mcT_h$ the triangulation of $\Omega_*^c$ and by $\Omega_{*,h}^c$ the triangulated approximate region. Let $\Phi_h$ be the map from $\overline{\Omega_{*,h}^c}$ to $\overline{\Omega_*^c}$ in this case. Let $\Gamma_h = \Phi_h^{-1}(\Gamma)$. The approximated boundaries in this case are denoted by $\Gamma_{j,h} = \Phi_h^{-1}(\Gamma_j \cap \Omega_*^c)$ for $j=1,2$. The numerical solution in the non-triangulated domain $\Omega_*$ will be approximated by a linear combination of some auxiliary functions, see details in the next section.

The approximated boundary of $\Gamma=\partial D$ in both cases is denoted by $\Gamma_h:=\Phi_h^{-1}(\Gamma)$.
From \cite[Proposition 2]{lenoir86}, suppose the boundaries $\Gamma$ and $\Gamma_j,\,j=1,2$ are smooth, there hold
\begin{align}\label{est:iso}
  \|\Phi_h - \mbox{id} \|_{L^\infty} \lesssim h^{l+1}, \;\; 
  \|D\Phi_h - I\|_{L^\infty} \lesssim h^l, \;\;\mbox{and}\;\; 
  \|\det(D\Phi_h) - 1\|_{L^\infty} \lesssim h^l.
\end{align}
The sample graded meshes for two cases, with maximal mesh size $h=0.2$ and refinement parameters $\alpha=1$, $\kappa=2$, are shown in Figure \ref{fig:sample_mesh}. 
\begin{figure}
  \centering
  \includegraphics[width=0.8\textwidth]{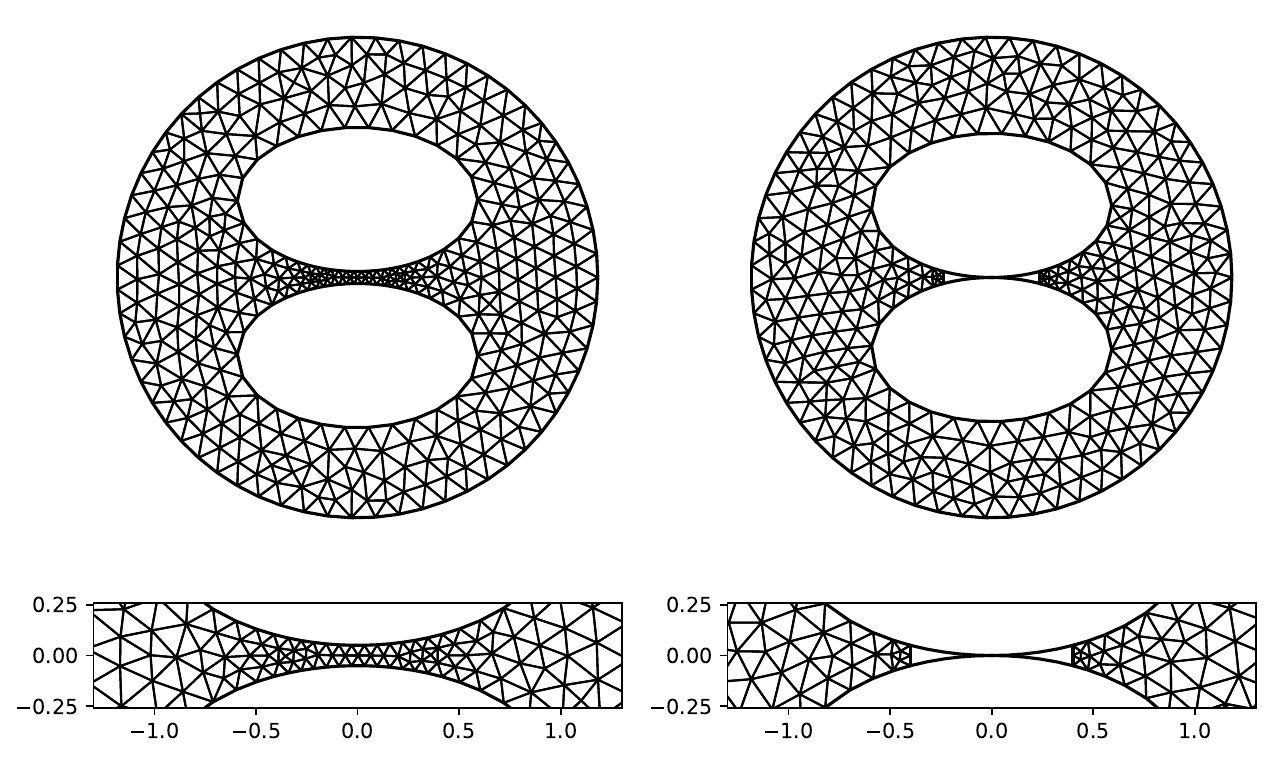}
  \caption{Graded meshes for $\alpha=1$ and $\kappa=2$. Left: Case 1, $\varepsilon=0.1$. Right: Case 2, $\varepsilon=10^{-5}$.}
  \label{fig:sample_mesh}
\end{figure}

\subsection{Finite element spaces} 
In this subsection, we introduce the definitions of finite element spaces for the two different cases. 

In Case 1, we define the finite element space on $\Omega_h$ by
\begin{equation}
  \label{eq:spaceSh1}
  \begin{aligned}
    \hat S_h := \big\{ \hat v \in H^1(\Omega_h) : &\;\hat v|_{\hat K} \in \mcP_p(\hat K) \;\;\mbox{for all } \hat K \in \mcT_h;  \\
    &\;\hat v = \mbox{constant on }\Gamma_{j,h} \mbox{ for }j=1,2
    \big\},
  \end{aligned}
\end{equation}
where $\mcP_p(\hat K)$ denotes the set of all iso-parametric polynomials of degree $\leq p$ on $\hat K$. Namely, if $\hat K$ is a curved triangle/simplex, then $\hat v|_{\hat K}\in \mcP_p(\hat K)$ iff its pull back onto the flat reference triangle/simplex is a polynomial of degree $\leq p$.

In Case 2, we firstly define an interpolation operator on the subdomain ${\Omega_*}$ defined in \eqref{def-Omega-s}, for functions $\hat v$ that are constant on each boundary $\Gamma_{j,h}$ for $j=1,2$. 
The key to designing a high-order numerical scheme for this problem in this case, with high-order convergence and error bound independent of $\varepsilon$, is the construction and use of the auxiliary functions $\tilde{v}_l(x)$ introduced in Section \ref{sec:aux}. 
By utilizing the auxiliary functions defined in \eqref{tvl} with $l=q$, we can define an interpolation operator $\hat I_q^*$ such that the function
\begin{equation}\label{eq:Ih*}
  \hat I_q^* \hat v(x) := \hat v|_{\Gamma_{1,h}} \tilde v_q(x) + \hat v|_{\Gamma_{2,h}} (1-\tilde v_q(x)) = (\hat v|_{\Gamma_{1,h}} - \hat v|_{\Gamma_{2,h}}) \tilde{v}_q(x) + \hat v|_{\Gamma_{2,h}}, \;\; x \in \overline\Omega_*,
\end{equation}
satisfies the following estimate if $\varphi\in C^{l,\gamma}(\Gamma)$ with $l\ge q$ (see \eqref{regularity2} in Theorem \ref{thm_highorder}): 
\begin{align}\label{est:intp:infty}
  \|\nabla^q(u - I_q^* u)\|_{L^\infty(\Omega_*)} 
  \lesssim 1,
\end{align}
where $u$ is the solution to \eqref{PDE} and $I_q^* u := \hat I_q^* (u \circ \Phi_h)$.

We denote $\Gamma_*:= \partial \Omega_* \cap \partial \Omega_*^c$ and define the finite element space on $\Omega_{*,h}^c$ as follows: 
\begin{equation}\label{eq:spaceS*hc}
\begin{aligned}
  \hat S_{*,h}^c := \big\{\hat v \in H^1(\Omega_{*,h}^c) : &\;\hat v|_{\hat K} \in \mcP_p(\hat K) \;\;\mbox{for all } \hat K \in \mcT_h; \\
  &\;\hat v = \mbox{constants on }\Gamma_{j,h} \mbox{ for }j=1,2; \\
  &\;\hat v = I_q^* \hat v \mbox{ at finite element nodes on $\Gamma_*$} 
  \big\}.
\end{aligned}
\end{equation}

The following proposition on the total degrees of freedom was proved in \cite[Theorem 2.6]{LBLH}.
\begin{proposition}
  \label{prop:dof}
  When $\alpha < 1 + \frac{1}{n}$, the total number of degrees of freedom in either finite element space $\hat S_h$ or $\hat S_{*,h}^c$ satisfies $N = O(h^{-n})$.
\end{proposition}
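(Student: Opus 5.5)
The plan is to bound the number of degrees of freedom by the number of elements times a constant depending only on $p$ (and $n$). Then I estimate the element count by integrating $\hbar(x)^{-n}$ over the triangulated region. For a shape-regular mesh with local size $\hbar$, each element $K$ has volume $\sim \hbar^n$ there. Hence
\[
\#\mcT_h \sim \int_{\Omega_h \text{ or } \Omega_{*,h}^c} \hbar(x)^{-n}\,\dx .
\]
Shape regularity makes $\hbar$ essentially constant on each element, since $|x'|^\alpha h$ varies slowly on a scale $|x'|^\alpha h \ll |x'|$. The constraints in $\hat S_h$ and $\hat S_{*,h}^c$ (constants on $\Gamma_{j,h}$, nodal matching with $I_q^*$ on $\Gamma_*$) only reduce the count. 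So $N \lesssim \#\mcT_h$, and it suffices to show the integral is $O(h^{-n})$.

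I split the integral by region.
\begin{itemize}
\item \textbf{Region $\Omega_0$.} Here $\hbar=h$, giving a contribution $O(h^{-n})$.
\item \textbf{Region $\Omega_1$ outside $\Omega_*$.} Here $\hbar=|x'|^\alpha h$. Integrating first in $x_n$ over an interval of length $\delta(x')\lesssim \varepsilon+|x'|^2$ by \eqref{djx}, the contribution is
\[
h^{-n}\int_{r_0\le |x'|\le 1} (\varepsilon+|x'|^2)\,|x'|^{-n\alpha}\,dx' .
\]
Here $r_0=\varepsilon^{1/2}$ in Case 1 and $r_0=(\kappa h)^{1/(2-\alpha)}$ in Case 2.
\end{itemize}

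For the $|x'|^2$ part, polar coordinates in $\R^{n-1}$ give $\int r^{2-n\alpha+n-2}\,dr=\int r^{n(1-\alpha)}\,dr$. This is bounded uniformly in $r_0$ provided $n(1-\alpha)>-1$, i.e.\ $\alpha<1+\frac1n$. This is exactly where the hypothesis enters.

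For the $\varepsilon$ part, $\varepsilon\int_{r_0}^1 r^{n-2-n\alpha}\,dr$ is $O(1)$ if $n-1-n\alpha>0$. Otherwise it is $\lesssim \varepsilon r_0^{\,n-1-n\alpha}$, or $\varepsilon|\log r_0|$ in the borderline case. In Case 1, $r_0=\varepsilon^{1/2}$ gives $\varepsilon^{(n+1-n\alpha)/2}\lesssim 1$, again by $\alpha<1+1/n$. In Case 2, $\varepsilon<r_0^2$ reduces it to the same bound $r_0^{\,n+1-n\alpha}\lesssim 1$.

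In Case 1 there is also the region $\Omega_*$, where $\hbar=\varepsilon^{\alpha/2}h$. Its volume is $\lesssim \varepsilon^{(n-1)/2}\cdot\varepsilon$. The contribution is therefore $h^{-n}\varepsilon^{(n+1-n\alpha)/2}\lesssim h^{-n}$.

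The main delicate point is justifying $\#\mcT_h\sim\int\hbar^{-n}$ in the thin layer, where $\delta(x')$ may be smaller than $|x'|^\alpha h$. In that case elements cannot have size $\hbar$ in the $x_n$ direction. My first step there would be to use the minimum of $\kappa$ layers across the gap. The choice of $r_0$ (the Case 1/Case 2 thresholds) guarantees $\delta(x')\gtrsim \kappa\hbar(x)$ on the triangulated part. Indeed, for $|x'|\ge r_0$ one has $|x'|^2\ge |x'|^\alpha r_0^{2-\alpha}\ge \kappa|x'|^\alpha h$ in Case 2, and similarly in Case 1 using $\varepsilon\ge(\kappa h)^{1/(1-\alpha/2)}$. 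So the elements have isotropic size $\hbar$, and the volume counting is valid up to constants depending on $\kappa$.
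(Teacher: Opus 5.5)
Your argument is correct. You bound $N$ by a $p$-dependent multiple of $\#\mcT_h\lesssim\int\hbar^{-n}\,\dx$ and split into $\Omega_0$, the graded part (where $\alpha<1+\frac1n$ keeps both $\int r^{n(1-\alpha)}\,dr$ and $\varepsilon r_0^{\,n-1-n\alpha}$ bounded) and, in Case 1, the core $\Omega_*$; this is the standard volume-counting argument behind the result, which the paper does not prove here but cites from \cite[Theorem 2.6]{LBLH}, and your check that the Case 1/Case 2 thresholds force $\delta(x')\gtrsim\kappa\hbar(x)$ on the triangulated region is what justifies treating the thin-gap elements as isotropic of size $\hbar$.
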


\subsection{Finite element methods}
Denote by $\hat I_h$ the standard Lagrange interpolation operator on the triangulated region $\Omega_h$ or $\Omega_{*,h}^c$. Next, we present the finite element schemes for both cases.

In Case 1, the proposed method for \eqref{PDE} is as follows: Find $\hat u_h \in \hat S_h$ such that $\hat u_h = \hat I_h (\varphi \circ \Phi_h)$ on $\Gamma_h$ and
\begin{align} \label{eq:FEM1}
  \int_{\Omega_h} \hat \nabla \hat u_h \cdot \hat \nabla \hat v_h \rd \hat x = 0 \quad \forall\, \hat v_h \in \big\{\hat v \in \hat S_h : \hat v|_{\Gamma_h} = 0 \big\},
\end{align}
where $\hat\nabla v:= \nabla_{\hat x} v(\hat x)$ and $\hat x = \Phi_h^{-1}(x)$.

In Case 2, the proposed method for \eqref{PDE} is as follows: Find $\hat u_h \in \hat S_{*,h}^c$ such that $\hat u_h = \hat I_h (\varphi \circ \Phi_h)$ on $\Gamma_h$ and
\begin{align}\label{eq:FEM2}
  \int_{\Omega_{*,h}^c} \hat \nabla \hat u_h \cdot \hat \nabla \hat v_h \rd \hat x + \int_{\Omega_*} \nabla \hat I_q^* \hat u_h \cdot \nabla \hat I_q^* \hat v_h \dx = 0 \;\; \forall\, \hat v_h \in \big\{\hat v \in \hat S_{*,h}^c : \hat v|_{\Gamma_h} = 0 \big\},
\end{align}
where the interpolation operator $\hat I_q^*$ is defined in \eqref{eq:Ih*}. 

\subsection{A unified formulation}
The numerical schemes \eqref{eq:FEM1} and \eqref{eq:FEM2} are applicable in practical computations. To facilitate further error analysis, we rewrite them as an equivalent unified formulation on the original domain $\Omega$.

First, we rewrite the finite element spaces defined in \eqref{eq:spaceSh1} and \eqref{eq:spaceS*hc} as follows:
\begin{alignat*}{2}
  S_h &= \{v \in H^1(\Omega) : v\circ\Phi_h \in \hat S_h\} &\quad&\mbox{in Case 1}; \\
  S_{*,h}^c &= \{v \in H^1(\Omega_*^c) : v\circ\Phi_h \in \hat S_{*,h}^c\} &\quad&\mbox{in Case 2}.
\end{alignat*}
Now we define a finite element space on the total domain $\Omega$ in Case 2. 
For functions that are constant on each boundary $\Gamma_j$ for $j=1,2$, we can extend the interpolation operator $\hat I_q^*$ defined in \eqref{eq:Ih*} by setting  
\begin{align*}
  I_q^* v(x) := \hat I_q^*(v \circ \Phi_h) = v|_{\Gamma_1} \tilde v_q(x) + v|_{\Gamma_2}(1-\tilde v_q(x)) \quad \mbox{for }x \in \overline\Omega_*.
\end{align*}
Then we define
\begin{equation}\label{eq:spaceS*h}
  S_{*,h} := \set{ v \in H^1(\Omega_*) : v = \mbox{constants on }\Gamma_j \cap \partial {\Omega}_* \;\;\mbox{and}\;\; v(x) = I_q^* v(x) \;\;\mbox{for } x \in \Omega_* }.
\end{equation}
The finite element space on $\Omega$ in Case 2 is given by
\begin{equation}\label{eq:spaceSh2}
  \begin{aligned}
    S_h := \big\{ v \in L^2(\Omega) &: v = \mbox{constants on }\Gamma_j \mbox{ for }j=1,2; \;\;\mbox{and}\\ 
    &\;\;\; v|_{\Omega_{*}^c} \in S_{*,h}^c, \; v|_{\Omega_*} \in S_{*,h}\big\}.
  \end{aligned}
\end{equation}
We remark that $S_h \subset H^1(\Omega_*)$ and $S_h \subset H^1(\Omega_*^c)$, but $S_h \not \subset H^{1}(\Omega)$. 

Next, we reformulate the numerical schemes \eqref{eq:FEM1} and \eqref{eq:FEM2} in a unified form. Denote the Jacobian matrix by $F(x) = D\Phi_h \circ \Phi_h^{-1}(x)$. In Case 1, we let $u_h = \hat u_h \circ \Phi_h^{-1}$ be the finite element solution pulled back to $\Omega$, and let $A_h(x) = F(x) F(x)^\text{T} \det F(x)$. 
In Case 2, we denote
\begin{align*}
  u_h(x) = 
  \begin{cases}
    \hat u_h \circ \Phi_h^{-1}(x), & x \in \Omega_*^c, \\
    I_q^* u_h(x), & x \in \Omega_*,
  \end{cases} \quad \mbox{and} \quad
  A_h(x) = 
  \begin{cases}
    F(x) F(x)^\text{T} \det F(x), & x \in \Omega_*^c, \\
    I, & x \in \Omega_*.
  \end{cases}
\end{align*} 
By \eqref{eq:spaceSh2}, we have $u_h \in S_h$. Moreover, we define the following subsets of $S_h$: 
\begin{align}
  S_h^\varphi = \big\{v \in S_h : v|_{\partial D} = \hat I_h (\varphi \circ \Phi_h) \circ \Phi_h^{-1}\big\} \quad\mbox{and}\quad S_h^0 = \big\{v \in S_h : v|_{\partial D} = 0\big\}. \label{def:S_h}
\end{align}
Then the proposed methods in \eqref{eq:FEM1} and \eqref{eq:FEM2} can be rewritten as the following unified form:
\begin{align}
  \mbox{Find } u_h \in S_h^\varphi \quad \mbox{s.t.}\quad \int_{\Omega_*^c \cup \Omega_*} (A_h \nabla u_h \cdot \nabla v_h) \rd x = 0 \quad \forall v_h \in S_h^0.\label{FEM}
\end{align}

\begin{remark}\label{rmk:jump}
In Case 2 and two dimensions, since $\Gamma_*$ consists of two line segments and the triangulation of $\Omega_*^c$ is fully fitted with $\Gamma_*$, it follows that $v_h$ is continuous on $\Gamma_*$ and therefore $v_h \in H^1(\Omega)$. 
However, in Case 2 and three dimensions, since $\Gamma_*$ is the lateral surface of a cylinder and the triangulation of $\Omega_*^c$ is unfitted with $\Gamma_*$, it follows that the jump $\jump{v_h}:= v_h|_{\Omega_*^c} - v_h|_{\Omega_*}$ of $v_h$ on $\Gamma_*$ does not vanish and therefore $v_h$ is generally not in $H^1(\Omega)$. 
\end{remark}

Finally, for any $v_h \in S_h^0$, multiplying \eqref{PDE} by $v_h$ and integration by parts gives
\begin{align}
  \int_{\Omega_*^c \cup \Omega_*} \nabla u \cdot \nabla v_h \rd x 
  = \int_{\Gamma_*} \partial_\nu u \cdot \jump{v_h} \rd s, \label{eq:vp}
\end{align}
where $\nu$ is the unit normal pointing to $\Omega_*$.
Then, the difference between \eqref{eq:vp} and \eqref{FEM} gives the following error equation: 
\begin{equation}\label{eq:ortho}
  \int_{\Omega_*^c \cup \Omega_*} A_h \nabla (u_h-u) \cdot \nabla v_h \rd x = \int_{\Omega_*^c \cup \Omega_*} (I - A_h) \nabla u \cdot \nabla v_h \rd x - \int_{\Gamma_*} \partial_\nu u \cdot \jump{v_h} \rd s.
\end{equation}

\subsection{FEM for linear elasticity problem} 


In addition to the conductivity problem, we will also consider the linear elasticity problem. In this case, the displacement field 
$$u=(u_1,u_2,\cdots,u_n)^\text{T}: D \to \mathbb{R}^n$$ 
is described by the Dirichlet problem for Lam{\'e} system 
\begin{equation}\label{eq:model0}
  \begin{cases}
    \nabla \cdot \left((\chi_\Omega \msC^0 + \chi_{D_1\cup D_2} \msC^1)e(u) \right) = 0 & \text{in }D, \\
    u = \varphi & \text{on }\Gamma,
  \end{cases}
\end{equation}
where $\chi_G$ is the characteristic function on domain $G \subset D$,
$e(u) = \frac{1}{2} (\nabla u + (\nabla u)^\text{T})$ is the strain tensor, 
and the elasticity tensors in background and inclusions are given by
\[
  \mathsf{C}^0_{ijkl} = \lambda \delta_{ij} \delta_{kl} + \mu(\delta_{ik}\delta_{jl} + \delta_{il} \delta_{jk})
  \quad \text{and} \quad
  \mathsf{C}^1_{ijkl} = \lambda_1 \delta_{ij} \delta_{kl} + \mu_1(\delta_{ik}\delta_{jl} + \delta_{il} \delta_{jk}),
\]
where $i,j,k,l = 1, 2, \cdots, n$ and $\delta_{ij}$ is the Kronecker symbol.

In high-contrast composite media, the strong concentration of $\nabla u$ typically occurs when $\vep$ is sufficiently small. 
To quantitatively analyze the influence of $\vep$ on this concentration phenomenon, we assume the Lam{\'e} constants in $D_1 \cup D_2$ degenerate to infinity and consider this extreme case. 
Let $u \in H^1(D)$ be the limiting solution of \eqref{eq:model0} as $\min\{\mu_1, n\lambda_1 + 2\mu_1\} \to \infty$, 
which yields 
\begin{equation}\label{eq:euD1D2}
  e(u) = 0 \quad \text{in } D_1 \cup D_2.
\end{equation}
We introduce a linear space of rigid displacement in $\bbR^n$, $\Psi = \left\{\psi \in C^1(\bbR^n) : e(\psi) = 0 \right\}$ with a basis $\{\psi_l : l = 1, 2, \cdots, {n(n+1)}/{2}\}$, which is 
\begin{align*}
  \Psi = \Span \left\{ e_i, \;x_j e_k - x_k e_j : 1 \leq i \leq n, \;1 \leq j < k \leq n \right\}, 
\end{align*}
where $\{e_i : 1 \leq i \leq n\}$ denotes the standard basis in $\bbR^n$. 
Then \eqref{eq:euD1D2} implies that 
\[
  u = \sum\nolimits_{l=1}^{n(n+1)/2} u_{l,j}\psi_l \quad \text{in }D_j \;\; \text{for }j=1,2. 
\]
By continuity conditions on $\Gamma_j$, the Lam{\'e} system \eqref{eq:model0} becomes the problem in $\Omega$:
\begin{equation}\label{eq:model1}
  \begin{cases}
    \mathcal{L}_{\lambda,\mu} u := \nabla \cdot (\msC^0 e(u)) = 0 & \text{in }\Omega, \\
    u = \sum_{l=1}^{n(n+1)/2} u_{l,j}\psi_l & \text{on }\Gamma_j \text{ for } j=1,2, \\
    \int_{\Gamma_j} \frac{\partial u}{\partial \nu} \cdot \psi_l = 0 & \text{for }l = 1, \cdots, n(n+1)/2 \text{ and }j = 1,2, \\
    u = \varphi & \text{on }\partial D,
  \end{cases}
\end{equation}
where $u_{l,j}$ are free constants to be determined by the third equation above and 
\[
  \frac{\partial u}{\partial \nu} := (\msC^0 e(u)) \vec{n} = \lambda (\nabla \cdot u) \vec{n} + 2\mu e(u) \vec{n} \quad \text{on }\Gamma_j,
\]
where $\vec{n}$ denotes the unit outward normal of $D_j,\, j=1, 2$.
An analogue of Theorem \ref{thm_highorder} for the linear elasticity problem is presented in \cite[Theorems 1.1 and 1.3]{DLTZ}. 

The finite element spaces for linear elasticity problem are similar to those for perfect conductivity problem (see \eqref{eq:spaceSh1} and \eqref{eq:spaceSh2}), except that each instance of the condition ``$v = $ constants on $\Gamma_j$'' is replaced by ``$v = \sum_{l=1}^{n(n+1)/2} v_{l,j} \psi_l$ on $\Gamma_j$, where $v_{l,j}$ are free constants of $v$''. In this case, the interpolation in the narrow region is defined by
\begin{align}
  I_q^* v := \sum_{j=1}^2 \sum_{l=1}^{n(n+1)/2} v_{l,j} \Lambda_{l,j}^{(q)}(x) \;\; \text{with}\;\; \Lambda_{l,j}^{(q)}(x) 
  := \psi_l(x) \sum_{m=1}^q \mathbf{v}_{l,j}^{(m)}(x), \;\; x \in \overline{\Omega}_*, \label{eq:Iq-elas}
\end{align} 
where the functions $\mathbf{v}_{l,j}^{(m)}(x)$ are derived iteratively and defined in detail in \cite{DLTZ}, in which $\mathbf{v}_{l,1}^{(m)}$ is denoted by $\mathbf{v}_{l}^{m}$, and $\mathbf{v}_{l,2}^{(m)}(x):=\psi_l(x)-\mathbf{v}_{l}^{m}(x)$. 


We next uniformly define the Jacobian determinant by $J(x) = \det F(x)$ in Case 1 and by
\begin{align*}
  J(x) = 
  \begin{cases}
    \det F(x), & x \in \Omega_*^c, \\
    1, & x \in \Omega_*,
  \end{cases} \quad \mbox{in Case 2. }
\end{align*}
Moreover, for matrix $\Theta$, we define $A_h \Theta = \frac{1}{2} J(x)^{-1} (F F^\text{T} \Theta + F \Theta^\text{T} F)$ in Case 1 and
\begin{align*}
  A_h \Theta = 
  \begin{cases}
    \frac{1}{2} J(x)^{-1} (F F^\text{T} \Theta + F \Theta^\text{T} F), & x\in \Omega_*^c, \\
    \frac{1}{2} (\Theta + \Theta^\text{T}), & x \in \Omega_*,
  \end{cases} \quad \mbox{in Case 2. }
\end{align*}
Then FEM for the linear elasticity problem \eqref{eq:model1} is written as a unified form: 
\begin{align}
  &\mbox{Find $u_h \in S_h^\varphi$ \;\;s.t. } \notag\\
  \int_{\Omega_*^c \cup \Omega_*} &\big( \lambda J(x) (\nabla\cdot u_h)(\nabla\cdot v_h) + 2\mu (A_h \nabla u_h, \nabla v_h) \big) \dx = 0 \;\; \forall v_h \in S_h^0, \label{FEM:elas}
\end{align}
where $u_h(x) = I_q^* u_h(x)$ in $\Omega_*$.

\subsection{Main result}
At the end of this section, we present the main theoretical result of this paper, which concerns error estimates for finite element schemes \eqref{FEM} and \eqref{FEM:elas}. 

\begin{theorem}\label{thm:error}
Under the conditions of Proposition \ref{thm:v1est}, we further assume that the finite elements of degree $p$ and the order $q$ defined in the interpolation \eqref{eq:Ih*} satisfy that $p\leq l-1$ and $[2p(1-\frac{1}{n})+3-n]/4<q\le l$. Moreover, let $\alpha \in (\alpha_{\rm min}, 1+\frac{1}{n})$ with 
\begin{align}\label{alphamin}
\alpha_{\rm min} = \max\set{1+ \tfrac{n-3}{2p}, 2- \tfrac{4q+n-3}{2p}}.
\end{align}
Then the finite element solution $u_h$ given by \eqref{FEM} or \eqref{FEM:elas} has the following error bound in approximating the solution of \eqref{PDE} or \eqref{eq:model1}{\rm:}
  \begin{align}\label{est:error}
    \|u-u_h\|_{L^2(\Omega)} + \|\nabla(u-u_h)\|_{L^2(\Omega_* \cup \Omega_*^c)} \leq C h^p,
  \end{align}
  where the constant $C > 0$ is independent of $\vep$. 
\end{theorem}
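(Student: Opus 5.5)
The proof will be a Strang-type argument based on the error equation \eqref{eq:ortho}. Write $u-u_h=(u-I_hu)+(I_hu-u_h)$, where $I_hu\in S_h^\varphi$ is a suitable interpolant. In Case 1 it is the isoparametric Lagrange interpolant $\hat I_h(u\circ\Phi_h)\circ\Phi_h^{-1}$. In Case 2 it is the same interpolant on $\Omega_*^c$, with the nodes on $\Gamma_*$ fixed by $I_q^*$, and $I_q^*u$ in $\Omega_*$. Set $e_h=I_hu-u_h\in S_h^0$, test \eqref{eq:ortho} with $v_h=e_h$, and use uniform coercivity of $A_h$, which follows from \eqref{est:iso}. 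This gives
\[
\|\nabla e_h\|_{L^2(\Omega_*\cup\Omega_*^c)}^2\lesssim \|\nabla(u-I_hu)\|\,\|\nabla e_h\| + \|(I-A_h)\nabla u\|\,\|\nabla e_h\| + \Big|\int_{\Gamma_*}\partial_\nu u\,\jump{e_h}\,\rd s\Big| .
\]
We also need a broken Poincar\'e inequality on $S_h^0$, which holds because the jumps are small. With it, the $L^2$ part of \eqref{est:error} reduces to the gradient bound together with the jump control below. So the work is to bound three terms: the interpolation error, the geometric error, and the consistency (jump) error.

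\emph{Interpolation error in $\Omega_*^c$.} We estimate elementwise,
\[
\|\nabla(u-I_hu)\|_{L^2(K)}\lesssim \hbar_K^{p}\|\nabla^{p+1}u\|_{L^2(K)} .
\]
Then we insert $\hbar=|x'|^\alpha h$ (or $\varepsilon^{\alpha/2}h$) and the pointwise bound \eqref{regularity} with $l=p+1$, which is allowed since $p\le l-1$. In $n=2$ this gives
\[
\int |x_1|^{2p\alpha}h^{2p}(\varepsilon+|x_1|^2)^{-(p+1)}\,\delta(x_1)\,\rd x_1 ,
\]
the factor $\delta\sim\varepsilon+|x_1|^2$ being the width of the narrow region. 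This is $O(h^{2p})$ uniformly in $\varepsilon$ provided $2p\alpha-2p>-1$. In $n=3$ the analogous computation, with the extra $|\log\varepsilon|^{-1}$ and the radial weight $|x'|\,\rd|x'|$, requires $2p(\alpha-1)>0$ up to a log. This is the origin of the constraint $\alpha>1+\frac{n-3}{2p}$. The region $\Omega_*$ in Case 1 is handled the same way with $\hbar=\varepsilon^{\alpha/2}h$. In Case 2, in $\Omega_*$, the choice of $q$ matters through \eqref{regularity2} and \eqref{est:intp:infty}. There $u-I_q^*u$ has bounded $q$-th derivatives, and by \eqref{est:v1-tv1} it is even of size $\delta^{q-s}$ in $W^{s,\infty}$. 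Hence $\|\nabla(u-I_q^*u)\|_{L^2(\Omega_*)}^2\lesssim \int_{|x'|\le(\kappa h)^{1/(2-\alpha)}}\delta^{2q-1}\,\rd x'$, which is $\lesssim h^{(4q-2+n-1)/(2-\alpha)}$. This is at most $h^{2p}$ exactly when $\alpha>2-\frac{4q+n-3}{2p}$, the second entry of $\alpha_{\min}$. The restriction on $q$ ensures this interval is compatible with $\alpha<1+\frac1n$, which in turn keeps $N=O(h^{-n})$ via Proposition~\ref{prop:dof}. The mismatch of the constants on $\Gamma_j$ between $u$ and $I_hu$ is harmless, because both are determined by nodal values on $\Gamma_{j,h}$.

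\emph{Geometric and consistency errors.} The term $(I-A_h)\nabla u$ is supported where $F\ne I$. By \eqref{est:iso} it is $O(h^p)$ times $\|\nabla u\|_{L^2}$, and $\|\nabla u\|_{L^2}$ is bounded uniformly in $\varepsilon$ by the energy bound. Some care is needed near the narrow region, where $\Phi_h$ acts on the curved boundaries $\Gamma_j$; the local isoparametric estimates should scale with $\hbar$ and still give $O(h^p)$. The main obstacle is the jump term on $\Gamma_*$ in Case 2, $n=3$. There $e_h$ on the $\Omega_*^c$ side is a piecewise polynomial matching $I_q^*e_h$ only at nodes, so $\jump{e_h}$ is an interpolation error of the smooth-in-$x'$ function $I_q^*e_h$ on an unfitted surface. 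My plan is to bound $\|\jump{e_h}\|_{L^2(\Gamma_*)}\lesssim \hbar^{p+1}\|\nabla^{p+1}(e_h|_{\Omega_*})\|$. Since $e_h$ is a constant difference times $\tilde v_q$, \eqref{est:v1} controls this via $|c^h_1-c^h_2|\,\delta^{-(p+2)/2}$ at $|x'|\sim(\kappa h)^{1/(2-\alpha)}$. I would then relate $|c^h_1-c^h_2|$ to $\|\nabla e_h\|$ through the capacity lower bound $\int|\nabla\tilde v_q|^2\gtrsim|\log\varepsilon|$ (or $\varepsilon^{-1/2}$ in $n=2$). Combined with $|\partial_\nu u|\lesssim$ \eqref{regularity}, a Young inequality then absorbs this into the left-hand side. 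Checking that the resulting powers of $h$ again reduce to the same $\alpha_{\min}$ condition is where I expect the delicate bookkeeping.

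Finally, the elasticity case \eqref{FEM:elas} follows verbatim. Coercivity comes from Korn's inequality on the broken space, whose uniform constant needs justification, and the regularity input is the analogue of Theorem~\ref{thm_highorder} from \cite{DLTZ} together with the auxiliary functions $\Lambda^{(q)}_{l,j}$.
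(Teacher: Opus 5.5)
Your plan matches the paper's proof in its main components. It uses the same modified interpolant $I_h$ and the same error equation \eqref{eq:ortho} tested with $I_hu-u_h$. Its region-by-region interpolation bounds yield exactly the two entries of $\alpha_{\rm min}$. The jump on $\Gamma_*$ is treated, as in the paper, as the interpolation error of $(c_1^h-c_2^h)\tilde v_q$, controlled by $|\nabla^{p+1}\tilde v_q|\lesssim\delta^{-(p+2)/2}$.

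\emph{The step that fails as written.} In Case 2, consider elements $K\subset\Omega_*^c$ with nodes on $\Gamma_*$. There $I_hu$ takes the nodal values $I_q^*u$ rather than $u$, so the bound $\|\nabla(u-I_hu)\|_{L^2(K)}\lesssim\hbar_K^p\|\nabla^{p+1}u\|_{L^2(K)}$ does not hold on these elements. The paper splits $u-I_hu=(u-I_Ku)+(I_Ku-I_hu)$, with $I_K$ the elementwise Lagrange interpolant. It bounds the second piece by an inverse estimate, $\hbar^{-1}\|u-I_q^*u\|_{L^\infty(\Gamma_*)}$, using $\hbar\sim h^{2/(2-\alpha)}$ and $|u-I_q^*u|\lesssim\delta^q$, over a volume $\lesssim h^{(n+1)/(2-\alpha)}$. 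This again gives $h^{(4q+n-3)/(2-\alpha)}$, so your condition $\alpha>2-\frac{4q+n-3}{2p}$ covers it. The term still has to be included.

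\emph{The capacity bound.} As stated, it is overstated where you use it. $e_h$ equals $(c_1^h-c_2^h)\tilde v_q+c_2^h$ only on $\Omega_*$, and there $\int_{\Omega_*}|\nabla\tilde v_q|^2\sim\log\big(1+(\kappa h)^{2/(2-\alpha)}/\varepsilon\big)$. In Case 2 this is only $\gtrsim1$, not $\gtrsim|\log\varepsilon|$. That weaker bound is enough. The paper gets it more simply by integrating $\partial_{x_n}e_h$ along vertical segments over $|x'|\le1$; these segments never cross the vertical cylinder $\Gamma_*$. This gives $|c_1^h-c_2^h|\lesssim\|\nabla e_h\|_{L^1}\lesssim\|\nabla e_h\|_{L^2}$.

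\emph{The postponed bookkeeping.} With this bound it closes immediately. Let $h_*\sim h^{2/(2-\alpha)}$ be the mesh size on $\Gamma_*$. Then $\|\nabla u\|_{L^2(\Gamma_*)}\lesssim h^{-\frac{1/2}{2-\alpha}}$ and $\|\jump{e_h}\|_{L^2(\Gamma_*)}\lesssim|\Gamma_*|^{1/2}h_*^{p+1}h^{-\frac{p+2}{2-\alpha}}\|\nabla e_h\|_{L^2}\lesssim h^{\frac{p+3/2}{2-\alpha}}\|\nabla e_h\|_{L^2}$. Hence the jump term is $\lesssim h^{\frac{p+1}{2-\alpha}}\|\nabla e_h\|_{L^2}\le h^p\|\nabla e_h\|_{L^2}$ as soon as $\alpha\ge1-\frac1p$. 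That is weaker than $\alpha>\alpha_{\rm min}$, so the jump imposes no new constraint, contrary to what you expected.

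\emph{The $L^2$ bound.} You do not need a broken Poincar\'e inequality justified by small jumps. Integrate vertically from $\Gamma_2$, where $u-u_h$ is one constant on both sides of $\Gamma_*$ (since $\tilde v_q=0$ there), and bound that constant by Lemma \ref{lem:fc}. This gives $\|u-u_h\|_{L^2}\lesssim h^p$ separately on $\Omega_*$ and on $\Omega_1\setminus\overline{\Omega_*}$. A standard Poincar\'e inequality handles $\Omega_0$.
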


\begin{remark}\label{rmk:pql}
	Let us take a careful look at the relations between the constants $l$, $p$, $q$, and $\alpha$. 

	First, recalling that $\varphi \in C^{l,\gamma}(\partial D)$ with $l\ge 2$ and boundaries $\Gamma$, $\Gamma_j,\,j=1,2,$ are sufficiently smooth (e.g., $C^l$-smooth), then the solution to \eqref{PDE} satisfies the regularity $u\in H^1(\Omega) \cap C^{l}(\overline{\Omega})$ and asymptotic estimate presented in Theorem \ref{thm_highorder}. We also note that it suffices to consider the problems with $\varphi 
  \in C^0(\partial D)$ by using the maximum modulus estimate and interior estimate.

	Second, the order of polynomials in finite element space $S_h$ is chosen as $p \leq l-1$ due to the regularity of solutions. The condition $q\le l$ is required by \eqref{est:intp:infty}. The lower bound of the order $q$ of interpolation operator in \eqref{eq:Ih*} is determined by $p$, whereas the mesh parameter $\alpha$ depends on $q$ and $p$. 
  Indeed, the mesh condition $\alpha_{\rm min} < \alpha < 1+ \frac{1}{n}$ should be satisfied to guarantee the optimality of convergence rate with respect to both meshsize and degrees of freedom, i.e., the finite element solution converges with an $H^1$-error bound of $O(h^p) \eqsim O(N^{-p/n})$.
  This mesh condition yields that $q$ should be selected such that $\alpha_{\rm min} < 1+\frac{1}{n}$, leading to the lower bound $[2p(1-\frac{1}{n})+3-n]/4$ on $q$, which is, $q > (p+1)/4$ when $n=2$ and $q > p/3$ when $n=3$.
  
  Table {\ref{tab:pqal}} provides the parameter selection criteria of $q$ and $\alpha$ for $p \leq 5$.
  \begin{table}[htbp]
    \centering
    \caption{Some choices of $q$ and the ranges of $\alpha$ for $p \leq 5$.}
    \label{tab:pqal}
    \begin{tabular}{|c|c|cc|}
      \hline
      \multirow{2}{*}{\begin{minipage}{1cm}\centering$p$\end{minipage}} & \multirow{2}{*}{\begin{minipage}{1cm}\centering$q$\end{minipage}} & \multicolumn{2}{c|}{$\alpha$}                                            \\ \cline{3-4} 
                          &                      & \multicolumn{1}{c|}{\begin{minipage}{4cm}\centering$n=2$\end{minipage}}                     & \begin{minipage}{4cm}\centering$n=3$\end{minipage}                   \\ \hline
      $1$ & \multirow{2}{*}{$1$} & \multicolumn{1}{c|}{$1/2 < \alpha < 3/2$} & \multirow{4}{*}{$1 < \alpha < 4/3$} \\ \cline{1-1} \cline{3-3}
      $2$                  &                      & \multicolumn{1}{c|}{$5/4 < \alpha < 3/2$}   &                         \\ \cline{1-3}
      $3$                  & \multirow{4}{*}{$2$} & \multicolumn{1}{c|}{$5/6 < \alpha < 3/2$}   &                         \\ \cline{1-1} \cline{3-3}
      $4$                  &                      & \multicolumn{1}{c|}{$9/8 < \alpha < 3/2$}   &                         \\ \cline{1-1} \cline{3-4} 
      $5$                  &                      & \multicolumn{1}{c|}{$13/10 < \alpha < 3/2$} & $6/5 < \alpha < 4/3$ \\ 
      \hline
    \end{tabular}%
  \end{table}
\end{remark}

\section{Proof of Theorem \ref{thm:error}}\label{sec:errorest}

In this section, we first define an interpolation operator $I_h : H_c^1(\Omega) \cap C(\overline\Omega) \to S_h$
and establish an interpolation error estimate for $u-I_h u$, where $u$ denotes the solution to problem \eqref{PDE}. We then provide a rigorous proof of the main result stated in Theorem~\ref{thm:error}.

\subsection{Interpolation operator}\label{sec:Ih}
In Case 1, the interpolation operator is defined in the standard way, namely, $I_h v := \hat I_h(v \circ \Phi_h) \circ \Phi_h^{-1}$, and obviously, $I_h v \in S_h$ for all $v \in H_c^1(\Omega) \cap C(\overline\Omega)$. 
In Case 2, we first modify the standard operator $\hat I_h$ to an interpolation operator on $\overline{\Omega_{*,h}^c}$ satisfying
\begin{alignat*}{2}
  {\rm (i)}\;   & \hat I_h v|_{\hat K} \in \mcP_p(\hat K) & \quad &\mbox{for all } \hat K \in\mcT_h; \\
  {\rm (ii)}\;  & \hat I_h v(x) = v(x) & &\mbox{if $x \in \overline{\Omega_{*,h}^c} \setminus \Gamma_*$ is a finite element node of $\mcT_h$}; \\
  {\rm (iii)}\; & \hat I_h v(x) = I_q^* v(x) & &\mbox{if $x \in \Gamma_*$ is a finite element node of $\mcT_h$},
\end{alignat*} 
where $I_q^*$ is defined in \eqref{eq:Ih*}.
Then, the interpolation operator 
in this case is defined by
\begin{equation}\label{eq:intp2}
  I_h v(x) :=
  \begin{cases}
    \hat I_h(v \circ \Phi_h) \circ \Phi_h^{-1}(x) & \mbox{for } x \in \overline{\Omega_{*}^c}, \\
    I_q^* v(x) & \mbox{for } x \in \Omega_*.
  \end{cases}
\end{equation}
Obviously, for any $v \in H_c^1(\Omega) \cap C(\overline\Omega)$, we have $I_h v |_{\Omega_{*}^c} \in S_{*,h}^c$ and $I_h v |_{\Omega_*} \in S_{*,h}$, then $I_h v \in S_h$. 

\subsection{The interpolation estimate}

\begin{lemma}\label{lem:intp}
Under the conditions of Theorem \ref{thm:error}, the solution $u$ to problem \eqref{PDE} satisfies the following estimate: 
	\begin{align}
		\|\nabla (u - I_h u)\|_{L^2(\Omega_*^c\cup\Omega_*)} \lesssim h^p. \label{est:intp}
	\end{align}
\end{lemma}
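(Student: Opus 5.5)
The plan is to split $\|\nabla(u-I_hu)\|^2_{L^2(\Omega_*^c\cup\Omega_*)}$ into three parts:
\begin{itemize}
\item the uniformly meshed region $\Omega_0$;
\item the graded region, which is $\Omega_1$ in Case 1 and $\Omega_1\cap\Omega_*^c$ in Case 2;
\item in Case 2 only, the non-triangulated region $\Omega_*$ together with the layer of elements touching $\Gamma_*$, where $\hat I_h$ was modified.
\end{itemize}
On $\Omega_0$ the function $u$ is smooth uniformly in $\varepsilon$ by Theorem \ref{thm_highorder} and standard elliptic regularity. The usual isoparametric interpolation estimate, using \eqref{est:iso} to transfer between $\hat K$ and $K=\Phi_h(\hat K)$, then gives $O(h^p)$ directly. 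The constancy of $u$ on $\Gamma_j$ is preserved by nodal interpolation, so $I_hu\in S_h$ and no extra terms arise.

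On the graded region I would use the elementwise bound
\begin{align*}
\|\nabla(u-I_hu)\|_{L^2(K)}\lesssim \hbar_K^p\,|K|^{1/2}\,\|\nabla^{p+1}u\|_{L^\infty(K)},
\end{align*}
valid because $p+1\le l$. The key geometric fact is that elements are shape-regular and fit in the gap. Their size is $\hbar\sim |x'|^\alpha h$ (or $\varepsilon^{\alpha/2}h$ in $\Omega_*$ in Case 1), and this is $\lesssim \delta(x')\sim \varepsilon+|x'|^2$ exactly under the case splitting $\varepsilon\gtrless(\kappa h)^{1/(1-\alpha/2)}$ and the definition of $\Omega_*$. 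Consequently $\varepsilon+|x'|^2$ is comparable across each element, and the pointwise bound \eqref{regularity} may be evaluated at any point of $K$. Summing over elements turns the sum into an integral. Writing $t=|x'|$ and integrating over a column of height $\delta\sim\varepsilon+t^2$:
\begin{itemize}
\item For $n=2$,
\begin{align*}
h^{2p}\int t^{2\alpha p}(\varepsilon+t^2)^{-(p+1)}(\varepsilon+t^2)\,dt\lesssim h^{2p}\int_0^1 t^{2p(\alpha-1)}\,dt,
\end{align*}
which is finite iff $\alpha>1-\frac1{2p}$.
\item For $n=3$, the extra factor $t\,dt$ and the extra power from \eqref{regularity} give
\begin{align*}
h^{2p}|\log\varepsilon|^{-2}\int t^{2p(\alpha-1)-1}\,dt,
\end{align*}
which is bounded for $\alpha>1$.
\end{itemize}
These are precisely the first entry of $\alpha_{\min}$ in \eqref{alphamin}. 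In Case 1 the inner part $|x'|\le\varepsilon^{1/2}$ is handled the same way, with constant mesh size $\varepsilon^{\alpha/2}h$.

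In Case 2, on $\Omega_*$ we have $I_hu=I_q^*u$. By \eqref{decom}, $u-I_q^*u=(c_1-c_2)(v_1-\tilde v_q)+(v_b-c_2)$. Then \eqref{est:v1-tv1} with $s=1$ and \eqref{W2infty-vb} give $|\nabla(u-I_q^*u)|\lesssim |c_1-c_2|\,\delta^{q-1}+\delta^t$. Integrating over $\Omega_*$, where $|x'|\le r:=(\kappa h)^{1/(2-\alpha)}$ and $\delta\lesssim r^2$, and using \eqref{c1c2est}, the result is $\lesssim r^{4q+n-3}$ up to the $\varepsilon$- or log-factors, which only help. This is $\lesssim h^{2p}$ provided $(4q+n-3)/(2-\alpha)\ge 2p$, i.e.\ $\alpha\ge 2-\frac{4q+n-3}{2p}$, the second entry of $\alpha_{\min}$.

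The main obstacle I expect is the layer of elements $K$ meeting $\Gamma_*$ in Case 2. There $\hat I_h$ uses the values $I_q^*u$ instead of $u$ at the nodes on $\Gamma_*$. I would write $I_hu-u=(\tilde I_hu-u)+(I_hu-\tilde I_hu)$, where $\tilde I_h$ is the standard nodal interpolant. The first term is covered by the graded-region estimate. The second is a polynomial whose nodal values are $(u-I_q^*u)(x_i)$ on $\Gamma_*$, and \eqref{est:v1-tv1} with $s=0$ bounds these by $|c_1-c_2|\,\delta^{q}+\delta^t\lesssim r^{2q}$. An inverse estimate then gives
\begin{align*}
\|\nabla(I_hu-\tilde I_hu)\|_{L^2(K)}\lesssim \hbar_K^{-1}\,|K|^{1/2}\,r^{2q},
\end{align*}
with $\hbar_K\sim r^\alpha h\sim r^2/\kappa$. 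There are $O(r^{n-2}\cdot r^2/\hbar)$ such elements in the layer, i.e.\ $O(r^{n-2})$ up to the constant $\kappa$. Summing yields a bound of the same order $r^{4q+n-3}$ as the $\Omega_*$ term, so it is absorbed under the same condition on $\alpha$. In three dimensions the unfitted cylinder $\Gamma_*$ requires care to ensure that the nodes "on $\Gamma_*$" lie within $O(\hbar)$ of it so that $\delta$ is comparable there, but this only affects constants.
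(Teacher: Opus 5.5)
Your proposal is correct and follows essentially the same route as the paper. It uses the same split into $\Omega_0$ and the graded region, where $\hbar\sim|x'|^\alpha h$ is tested against \eqref{regularity} to obtain $\alpha>1+\frac{n-3}{2p}$. In Case~2 it likewise writes $u-I_hu=(u-I_Ku)+(I_Ku-I_hu)$ on the elements touching $\Gamma_*$, applies an inverse estimate, and uses $|\nabla^s(u-I_q^*u)|\lesssim\delta^{q-s}$, which leads to the exponent $(4q+n-3)/(2-\alpha)$.

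One small slip: the layer along $\Gamma_*$ contains $O(r^{2-n})$ elements, not $O(r^{n-2})$. This does not affect the conclusion. The layer has volume $O(r^{n+2})$, so its contribution is $\lesssim \hbar^{-2}r^{4q}r^{n+2}\lesssim r^{4q+n-2}\le r^{4q+n-3}$. The paper avoids counting altogether by bounding this volume by that of $\Omega_*'$.
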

\begin{proof}
We recall the definitions of $\Omega_0 := \Omega\setminus\overline{\Omega_1}$ and $\Omega_*$ in Section \ref{sec:mesh}, and rewrite the definition of the latter equivalently as 
\begin{align*}
\Omega_* = \Big\{x \in \Omega_1 : |x'| \leq R_* := \max\big\{\vep^{1/2}, (\kappa h)^{\frac{1}{2-\alpha}}\big\}\Big\}.
\end{align*}
For any domain $G \subset \mathbb{R}^n$, we denote $G' := \mbox{interior of } \bigcup_{K\in \mcT_h, K\cap \overline{G} \not= \emptyset} K$. Thus $\Omega'_*$ is slightly larger than $\Omega_*$. 
  
	{\it Step 1.} When $K \in \mcT_h$ and $K \cap \Omega_0 \neq \emptyset$, by the standard interpolation estimates, 
  \begin{equation}\label{est:intp0}
    \begin{aligned}
      \|\nabla (u - I_h u)\|_{L^2(\Omega'_0)} & \lesssim \sum_{K \cap \Omega_0 \neq \emptyset} \|\nabla (u - I_h u)\|_{L^2(K)} \lesssim h^{p} \| u \|_{H^{p+1}(\Omega'_0)} \lesssim h^p.
    \end{aligned}
  \end{equation}

  {\it Step 2.} When $K \in \mcT_h$, $K \cap \Omega_1 \neq \emptyset$, and $K \cap \overline\Omega_* = \emptyset$, we use Theorem \ref{thm_highorder} and $\hbar(x) = |x'|^\alpha h$ in $\Omega_{1}\backslash\Omega_*$, which imply the following estimates: 
  \begin{align*}
    \| \nabla (u - I_h u) \|_{L^2(\Omega'_1\setminus\Omega'_*)}^2 & \leq \sum_{K \cap \Omega_1 \neq \emptyset,\,K \cap \overline\Omega_* = \emptyset} \int_K |\nabla (u - I_h u)|^2 \rd x \\
    & \lesssim \sum_{K \cap \Omega_1 \neq \emptyset,\,K \cap \overline\Omega_* = \emptyset} \int_K (|x'|^\alpha h)^{2p} \| u \|_{W^{p+1}_{\infty}(K)}^2 \rd x \\
    \mbox{(Theorem \ref{thm_highorder} is used)}\; & \lesssim h^{2p} \sum_{K \cap \Omega_1 \neq \emptyset,\,K \cap \overline\Omega_* = \emptyset} \int_K |x'|^{2 \alpha p} (\vep + |x'|^2)^{-(p+n-1)} \rd x \\
    \mbox{(since $|x'|\ge R_* \geq \varepsilon^{1/2}$)}\; & \lesssim h^{2p} \int_{R_* \le |x'| \lesssim 1} \rd x' \int_{\phi_2(x')}^{\phi_1(x')} |x'|^{2(\alpha p - p - n + 1)} \rd x_n \\
    \mbox{(since $\phi_1-\phi_2 \eqsim \varepsilon + |x'|^2$)}\; & \lesssim h^{2p} \int_{R_* \le |x'| \lesssim 1} |x'|^{2(\alpha p - p - n + 2)} \rd x' \\
    & \lesssim h^{2p} \left(1 + R_*^{2(\alpha-1)p - n + 3}\right).
  \end{align*}
  Since {$\alpha > 1 + {(n-3)}/{2p}$}, we have $2(\alpha-1)p-n+3 > 0$ and therefore 
  \begin{equation}\label{est:intp1}
    \begin{aligned}
      \| \nabla (u - I_h u) \|_{L^2(\Omega'_1\setminus\Omega'_*)} \lesssim h^p.
    \end{aligned}
  \end{equation}

  {\it Step 3.} We estimate the interpolation errors in the narrow region $\Omega'_*$ for two cases separately. 
	In Case 1 where $\vep \geq (\kappa h)^{\frac{1}{1-\alpha/2}}$ (and $\kappa$ is a constant), the mesh size in $\Omega_*'$ is $\vep^{\alpha/2}h$. Noting $|x'| \lesssim \vep^{1/2}+ \vep^{\alpha/2}h$ for $x \in \Omega_*'$ and $\vep^{\alpha/2}h \le \kappa^{-1} \vep \lesssim \vep^{1/2}$, we have
  \begin{equation}\label{est:intp*1}
    \begin{aligned}
      \| \nabla (u - I_h u) \|_{L^2(\Omega'_*)}^2 & \lesssim (\vep^{\alpha/2}h)^{2p} \| u \|_{H^{p+1}(\Omega'_*)}^2 \\
      \mbox{(Theorem \ref{thm_highorder} is used)}\; & \lesssim h^{2p} \vep^{\alpha p} \int_{|x'| \lesssim \vep^{1/2} + \vep^{\alpha/2}h} \rd x' \int_{\phi_2(x')}^{\phi_1(x')} \vep^{-(p+n-1)} \rd x_n \\
      \mbox{(since $\phi_1-\phi_2 \eqsim \varepsilon + |x'|^2 \lesssim \varepsilon$)}\; & \lesssim h^{2p} \vep^{(\alpha-1) p + 2 - n} \int_{|x'|\lesssim \vep^{1/2}} \rd x' \\
      &\lesssim h^{2p} \vep^{(\alpha-1) p - (n-3)/2} \\  
      &\lesssim h^{2p},
    \end{aligned}
  \end{equation}
  where we have used $\alpha > 1 + {(n-3)}/{2p}$ again in the last inequality. 
	

  In Case 2 where $\vep < (\kappa h)^{\frac{1}{1-\alpha/2}}$, for the elements satisfying $K \in \mcT_h$ and $K \cap \overline\Omega_* \neq \emptyset$, we have
  \begin{align*}
    \sum_{K \cap\overline\Omega_* \neq \emptyset} &\|\nabla (u-I_h u)\|_{L^2(K)}^2 \\
    &\lesssim \sum_{K \cap\overline\Omega_* \neq \emptyset}\|\nabla (u- I_K u)\|_{L^2(K)}^2 + \sum_{K \cap\overline\Omega_* \neq \emptyset}\|\nabla (I_K u - I_h u)\|_{L^2(K)}^2, 
  \end{align*}
  where $I_K$ denotes the Lagrange interpolation operator on $K$. 
  Similar to the proof of \eqref{est:intp1}, the first term in right-hand-side of the above inequality is bounded by $O(h^{2p})$. For the second term, we note $I_K u - I_h u$ is a finite element function on $K$, whose degrees of freedom are equal to the interpolation $u-I_q^* u$ at the finite element nodes on $\Gamma_*$, and vanish at all other nodes. 
  Moreover, the mesh size $\hbar(x')$ near $\Gamma_*$ satisfies $|x'|^\alpha h \sim (\kappa h)^{\frac{\alpha}{2-\alpha}}h \sim h^{\frac{2}{2-\alpha}}$, and therefore,
  \begin{align*}
    \sum_{K \cap\overline\Omega_* \neq \emptyset} &\|\nabla (I_K u - I_h u)\|_{L^2(K)}^2 \\
    &\lesssim \Big( h^{-\frac{2}{2-\alpha}} \|u - I_q^* u\|_{L^\infty(\Gamma_*)}\Big)^2
    \int_{|x'|\sim h^{\frac{2}{2-\alpha}}} \diff x'\int_{\phi_2(x')}^{\phi_1(x')} \diff x_n.
  \end{align*}
  Noting $h^{\frac{2}{2-\alpha}} \lesssim h^{\frac{1}{2-\alpha}}$, we get 
  \begin{equation}\label{est:intp*2}
    \begin{aligned}
      &\;\| \nabla (u - I_h u) \|_{L^2(\Omega'_*)}^2  \\
      \lesssim &\;\sum_{K \cap\overline\Omega_* \neq \emptyset} \| \nabla (u - I_h u) \|_{L^2(K)}^2 + \| \nabla (u - I_h u) \|_{L^2(\Omega_*)}^2 \\
      \lesssim &\;h^{2p} + \Big( h^{-\frac{4}{2-\alpha}} \|u - I_q^* u\|_{L^\infty(\Gamma_*)}^2 + \| \nabla (u - I_q^* u) \|_{L^\infty(\Omega'_*)}^2  \Big) \int_{|x'| \lesssim h^{\frac{1}{2-\alpha}}} \rd x' \int_{\phi_2}^{\phi_1} \rd x_n. 
    \end{aligned}
  \end{equation}
	A simple calculation yields
	\begin{align}
		\int_{|x'| \lesssim  h^{\frac{1}{2-\alpha}}} \rd x' \int_{\phi_2(x')}^{\phi_1(x')} \rd x_n 
		\lesssim 
		\int_{|x'| \lesssim  h^{\frac{1}{2-\alpha}}} (\vep + |x'|^2) \rd x'
		\lesssim 
		h^{\frac{n+1}{2-\alpha}}. \label{est:vol}
	\end{align}
  where the last inequality follows from $\vep < (\kappa h)^{\frac{1}{1-\alpha/2}}$ in Case 2 and $|x'|\lesssim h^{\frac{1}{2-\alpha}}$ in the integral. It remains to estimate the factor $\| {\nabla^s} (u - I_q^* u) \|_{L^\infty(\Omega'_*)}$ in \eqref{est:intp*2} {with $s=0,1$.} 

  By the decomposition \eqref{decom} and the definition \eqref{eq:Ih*}, it holds
  \begin{align*}
    u - I_q^* u = (c_1-c_2)(v_1 - \tilde v_q) + (v_b-c_2),
  \end{align*}
  where $c_j,\,j=1,2,$ are free constants of $u$ on $\Gamma_j$. Using the second result of Proposition \ref{thm:v1est} and \eqref{c1c2est}--\eqref{W2infty-vb}, for $1 \le q \le l$, $s=0,1$, and $x \in \Omega_{1/4}$, we have 
  \begin{align}
    |{\nabla^s} (u - I_q^* u)(x) | &\leq  |c_1-c_2| |{\nabla^s} (v_1 - \tilde v_q)(x)| + |{\nabla^s} (v_b-c_2)| 
    \lesssim \delta(x')^{{q-s}}. \label{est:res}
  \end{align}
  Therefore, from \eqref{est:intp*2}, \eqref{est:vol}, and \eqref{est:res}, 
  we arrive at
	\begin{align}
		\| \nabla (u - I_h u) \|_{L^2(\Omega'_*)}^2 &\lesssim  h^{2p} + h^{\frac{n+1}{2-\alpha}} \Big( {h^{-\frac{4}{2-\alpha}}\big(\vep + (\kappa h)^{\frac{2}{2-\alpha}}\big)^{{2q}}} + \big(\vep + (\kappa h)^{\frac{2}{2-\alpha}}\big)^{{2(q-1)}} \Big) \notag \\
    &\lesssim h^{2p} + (\kappa h)^{\frac{{4q+n-3}}{2-\alpha}} \label{est:intp*3} \\
    &\lesssim h^{2p}, \notag
	\end{align}
  where we have used $\alpha > \alpha_{\rm min} \geq 2-\frac{{4q+n-3}}{2p}$ 
  in the last inequality.
  
  Combining \eqref{est:intp0}, \eqref{est:intp1}, \eqref{est:intp*1}, and 
  \eqref{est:intp*3}, we conclude the proof of this lemma.
\end{proof}
\begin{remark}
  The interpolation error estimate \eqref{est:intp} remains valid for linear elasticity problem. Since the proof follows along similar lines, we omit the details here.
\end{remark}

\subsection{Finite element error estimate}
Next, we focus on the perfect conductivity problem. We first present an estimate for free constants which is useful in the error estimates, and then give a proof of Theorem \ref{thm:error} by using the interpolation error estimate in Lemma \ref{lem:intp}. 

\begin{lemma}\label{lem:fc}
For $v \in H_c^1(\Omega) = \left\{ v \in H^{1}(\Omega) : v \text{ is constant on } \Gamma_j \text{ for } j=1,2 \right\}$, the following estimates hold:
\begin{align}
    \abs{v|_{\Gamma_j}} &\lesssim \|\nabla v\|_{L^1(\Omega_0)} + \|v\|_{L^1(\Gamma)} . \label{est:gammaj}
\end{align}
\end{lemma}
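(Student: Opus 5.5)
The plan is a trace-type argument along curves in the fixed region $\Omega_0 = \Omega\setminus\overline{\Omega_1}$, which stays away from the narrow gap. Since $\Omega_0$ is independent of $\vep$ up to smooth perturbations of the inclusions, all constants will be uniform in $\vep$.

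\emph{Transport to the outer boundary.} Fix $j\in\{1,2\}$ and write $c_j := v|_{\Gamma_j}$. Choose a portion $\Sigma_j\subset\Gamma_j\cap\partial\Omega_0$ with surface measure bounded below, and a family of disjoint smooth curves lying in $\Omega_0$. These curves will be parametrized as $\gamma_y:[0,1]\to\overline{\Omega_0}$, indexed by $y$ in an $(n-1)$-dimensional parameter set $Y$ of measure $\sim 1$. Each $\gamma_y$ starts at a point of $\Sigma_j$ and ends at a point of $\Gamma$. The curves should have uniformly bounded length, and the map $(y,t)\mapsto\gamma_y(t)$ should be a bi-Lipschitz diffeomorphism onto a tubular region $T_j\subset\Omega_0$, with Jacobian bounded above and below. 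Such a tube exists because $\Omega_0$ is connected and contains a neighborhood of $\Gamma$ and of the part of $\Gamma_j$ with $|x'|>1$. Since $\Omega_0$ is independent of $\vep$, this construction can be fixed once and for all.

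\emph{Integration along the curves.} For smooth $v$ (then by density in $H^1_c$, all traces being continuous), the fundamental theorem of calculus gives, for each $y\in Y$,
\[
|c_j| \le |v(\gamma_y(1))| + \int_0^1 |\nabla v(\gamma_y(t))|\,|\gamma_y'(t)|\,\rd t .
\]
Averaging over $y\in Y$ and changing variables through the bi-Lipschitz tube map, the second term is bounded by $\|\nabla v\|_{L^1(T_j)}\le\|\nabla v\|_{L^1(\Omega_0)}$. The first term becomes $\int_Y|v(\gamma_y(1))|\,\rd y$. The endpoint map $y\mapsto\gamma_y(1)$ is a bi-Lipschitz parametrization of a patch of $\Gamma$, so this term is $\lesssim\|v\|_{L^1(\Gamma)}$. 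Dividing by $|Y|\sim1$ yields \eqref{est:gammaj}.

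\emph{Main obstacle.} The only delicate point is the geometric step: constructing the tube with $\vep$-uniform bi-Lipschitz constants. To do this I would first fix it for the limiting touching configuration. I would then note that small smooth perturbations of $D_1,D_2$ away from the origin, i.e. changing $\vep$ in the local graphs $\phi_j$, can be absorbed by a smooth near-identity diffeomorphism. Alternatively, one can place the curves entirely in the region $|x'|\ge 1/2$, where the geometry does not depend on $\vep$ at all.

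Density is routine: traces from $H^1$ into $L^1$ of the boundary are continuous, and the constant value $c_j$ equals the trace.
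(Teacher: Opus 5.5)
Your proof is correct, but it takes a different route from the paper. The paper's proof is two cited inequalities. Since $v$ is constant on $\check\Gamma_j=\Gamma_j\cap\partial\Omega_0$, one has $\abs{v|_{\Gamma_j}}\lesssim\|v\|_{L^1(\check\Gamma_j)}\lesssim\|v\|_{W^{1,1}(\Omega_0)}$ by the $W^{1,1}$ trace inequality. The Friedrichs-type inequality $\|v\|_{W^{1,1}(\Omega_0)}\lesssim\|\nabla v\|_{L^1(\Omega_0)}+\|v\|_{L^1(\Gamma)}$ (from Maz'ya) then finishes the proof.

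Your tube-of-curves argument is in effect a hand-made proof of these two inequalities combined, localized to one tube joining a patch of $\Gamma_j$ to a patch of $\Gamma$. It uses only the fundamental theorem of calculus and a change of variables, and it makes the $\varepsilon$-uniformity of the constant explicit. The paper leaves that uniformity implicit: the trace and Friedrichs constants live on the $\varepsilon$-dependent domain $\Omega_0$. In exchange, the paper's route is shorter and gives the stronger intermediate bound on the full norm $\|v\|_{W^{1,1}(\Omega_0)}$.

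Two small points of wording. First, neither $\Omega_0$ nor the region $|x'|\ge 1/2$ is literally independent of $\varepsilon$, since the inclusions shift by $O(\varepsilon)$. Use your first fix instead, or simply take short straight segments normal to $\Gamma_j$ from a patch far from the gap; their bi-Lipschitz constants are obviously uniform. Second, the density step is cleanest in this order. State the averaged inequality
$\int_Y|v(\gamma_y(0))|\,\rd y\lesssim\int_Y|v(\gamma_y(1))|\,\rd y+\|\nabla v\|_{L^1(T_j)}$
for all $v\in C^\infty(\overline{\Omega_0})$. Extend it to $W^{1,1}(\Omega_0)$ by continuity of traces. Only then specialize to a constant trace on $\Sigma_j$. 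This avoids any need for density inside $H^1_c(\Omega)$.
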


\begin{proof}
  Let $\check\Gamma_j = \Gamma_j \cap \Omega_0$ with $\Omega_0=\Omega\backslash\overline\Omega_1$. 
  Since $v|_{\check\Gamma_j}$ is constant, it follows that 
  \[
  \bv{v|_{\check\Gamma_j}} \lesssim \|v\|_{L^1(\check\Gamma_j)} 
  \lesssim \|v\|_{W^{1,1}(\Omega_0)} ,  
  \]
  where the last inequality is the trace inequality. Then 
  \eqref{est:gammaj} follows from 
  \begin{align}
    \|v\|_{W^{1,1}(\Omega_0)}
  \le \|\nabla v\|_{L^1(\Omega_0)} + \|v\|_{L^1(\Gamma)},  \label{Friedrichs}
  \end{align}
  which is the Friedrichs-type inequality, see, e.g., \cite[eq. (6.11.2) in Section 6.11]{Mazya11}.
\end{proof}

Next, we present the error estimate for the numerical method in \eqref{FEM}.

\begin{proof}[Proof of Theorem \ref{thm:error} for the perfect conductivity problem]
  Let $\rho = u - I_h u$ and $\theta_h = u_h - I_h u$. 
  For any $v_h \in S_h^0$, by error equation \eqref{eq:ortho}, we start with
  \begin{equation}
    \begin{aligned}
      &\;\quad\int_{\Omega_*^c \cup \Omega_*} A_h \nabla\theta_h \cdot \nabla v_h \rd x \\
      &= \int_{\Omega_*^c \cup \Omega_*} (I-A_h)\nabla u \cdot\nabla v_h \rd x + \int_{\Omega_*^c \cup \Omega_*} A_h \nabla\rho \cdot \nabla v_h \rd x - \int_{\Gamma_*} \partial_\nu u \cdot \jump{v_h} \rd s. 
    \end{aligned}\label{est1}
  \end{equation}
  For simplicity, we denote $\|\cdot\|_{L^t} = \|\cdot\|_{L^t(\Omega_*^c \cup \Omega_*)}$ for $t \geq 1$. 
  Using \eqref{est:iso} and noting that 
  \begin{align*}
    A_h - I = (F-I)(F^{\rm T} \det F) + (F^{\rm T} - I)\det F + (\det F - 1)I,
  \end{align*}
  it is easy to verify that $\|I - A_h\|_{L^\infty(\Omega_*^c)} \lesssim h^p$. Let $v_h = \theta_h$ in \eqref{est1}, we have
  \begin{equation}
    \begin{aligned}
      \|\nabla \theta_h\|_{L^2}^2 &\lesssim h^p \|\nabla u\|_{L^2} \|\nabla \theta_h\|_{L^2} + \|\nabla \rho\|_{L^2} \|\nabla \theta_h\|_{L^2} + \|\nabla u\|_{L^2(\Gamma_*)} \|[{\theta_h}]\|_{L^2(\Gamma_*)}. 
    \end{aligned}\label{est2}
  \end{equation}
  Since $\|\nabla \rho\|_{L^2} \lesssim h^p$ (as a result of Lemma \ref{lem:intp}) and $\|\nabla u\|_{L^2} \lesssim 1$, it follows that  
  \begin{align}
    \|\nabla \theta_h\|_{L^2}^2 \lesssim h^{2p} + \|\nabla u\|_{L^2(\Gamma_*)}\|[{\theta_h}]\|_{L^2(\Gamma_*)}. \label{est3.0}
  \end{align}
  It remains to estimate the last term in \eqref{est3.0}. 
  According to Remark \ref{rmk:jump}, we only need to consider Case 2 in three dimensions, that is, when $\vep < (\kappa h)^{\frac{2}{2-\alpha}}$ and $n=3$. In all other cases, we have $[{\theta_h}]|_{\Gamma_*} = 0$, and hence, this term vanishes. 
  Next, we denote $\theta_h^+ := \theta_h|_{\Omega_*^c}$ and $\theta_h^- := \theta_h|_{\Omega_*}$. Let $\theta_h^* := I_q^* \theta_h = \hat I_q^*(\theta_h \circ \Phi_h)$ be the vertical interpolation defined in \eqref{eq:Ih*}. 
  Recalling the definition of interpolation operator $I_h$ defined in Section \ref{sec:Ih}, it is clear that $\theta_h^- = I_q^* u_h - I_q^* u = \theta_h^*$ on $\Gamma_*$. 
  Furthermore, noting $\theta_h^+$ is a finite element function in $\Omega_*^c$ and $\theta_h^+ = \theta_h^*$ at all finite element nodes on $\Gamma_*$, we have $\theta_h^+|_{\Gamma_*} = \hat I_h (\theta_h^* \circ \Phi_h) \circ \Phi_h^{-1} = I_h \theta_h^*$.
  Therefore, $\jump{\theta_h}=\theta_h^* - I_h \theta_h^*$ on the interface $\Gamma_*$, and \eqref{est3.0} becomes
  \begin{equation}
    \begin{aligned}
      \|\nabla \theta_h\|_{L^2}^2 
      &\lesssim h^{2p}  + \|\nabla u\|_{L^2(\Gamma_*)} \|\theta_h^* - I_h \theta_h^*\|_{L^2(\Gamma_*)} . 
    \end{aligned}\label{est3}
  \end{equation}
  First, by Theorem \ref{thm_highorder} with $l=1$, and $|x'|\sim (\kappa h)^{\frac{1}{2-\alpha}}$ on $\Gamma_*$, we have 
  \begin{align}\label{est4}
    \|\nabla u\|_{L^2(\Gamma_*)} \lesssim \|\nabla u\|_{L^\infty(\Gamma_*)} |\Gamma_*|^{\frac{1}{2}} \lesssim \big(\vep + (\kappa h)^{\frac{2}{2-\alpha}}\big)^{-1}(\kappa h)^{\frac{3/2}{2-\alpha}} \lesssim h^{-\frac{1/2}{2-\alpha}}.
  \end{align}
  Denote the mesh size on $\Gamma_*$ by $h_* \sim (\kappa h)^{\frac{\alpha}{2-\alpha}}h \sim h^{\frac{2}{2-\alpha}}$. The interpolation estimate yields
  \begin{equation}\label{est5}
    \begin{aligned}
      \|\theta_h^* - I_h \theta_h^*\|_{L^2(\Gamma_*)} 
      &\lesssim |\Gamma_*|^{\frac{1}{2}}\|\theta_h^* - I_h \theta_h^*\|_{L^\infty(\Gamma_*)} \\
      &\lesssim (\kappa h)^{\frac{3/2}{2-\alpha}} h_*^{p+1} \abs{\theta_h^*}_{W^{p+1,\infty}(\Gamma_*)} \\
      &\lesssim h^{\frac{2p+7/2}{2-\alpha}}\|\nabla^{p+1} \theta_h^* \|_{L^\infty(\Omega_*'\backslash\Omega_*)}.
    \end{aligned}
  \end{equation}
  Next, we estimate $\|\nabla^{p+1} \theta_h^* \|_{L^\infty(\Omega_*'\backslash\Omega_*)}$. 
  Since $\theta_h^* = I_q^* \theta_h= (\theta_h|_{\Gamma_1}) \tilde{v}_q + (\theta_h|_{\Gamma_2}) (1-\tilde{v}_q)$, with $\theta_h|_{\Gamma_1}$ and $\theta_h|_{\Gamma_2}$ being two constants, it follows that 
  \begin{align*}
    \|\nabla^{p+1} \theta_h^* \|_{L^\infty(\Omega_*'\backslash\Omega_*)} = \abs{\theta_h|_{\Gamma_1} - \theta_h|_{\Gamma_2}} \| \nabla^{p+1} \tilde{v}_q \|_{L^\infty(\Omega_*'\backslash\Omega_*)}.
  \end{align*}
  By the Newton--Leibnitz rule, we have
  \begin{align*}
      \left| \theta_h|_{\Gamma_1} - \theta_h|_{\Gamma_2} \right| = \bigg|\int_{\phi_2(x')}^{\phi_1(x')} \frac{\partial\theta_h}{\partial x_n} \diff x_n \bigg| 
  \end{align*}
  and therefore, by integrating this relation in the region $|x'|\le 1$ and using that $\abs{\theta_h|_{\Gamma_1} - \theta_h|_{\Gamma_2}}$ is a constant, we obtain
  \begin{align}
      \left| \theta_h|_{\Gamma_1} - \theta_h|_{\Gamma_2} \right| \lesssim \|\nabla\theta_h\|_{L^1} \lesssim \|\nabla\theta_h\|_{L^2}.
  \end{align}
  On the other hand, for $x=(x',x_n) \in {\Omega_*'\backslash\Omega_*}$ we have $|x'|\sim h^{\frac{1}{2-\alpha}}$ and therefore, according to the first result \eqref{est:v1} in Proposition \ref{thm:v1est}, 
  \begin{align*}
    \big|\nabla^{p+1} \tilde{v}_q(x) \big| \lesssim \delta(x')^{-\frac{p+2}{2}} 
    \lesssim h^{-\frac{p+2}{2-\alpha}} \;\; \mbox{for } x \in {\Omega_*'\backslash\Omega_*}.
  \end{align*}
  Hence, we obtain
  \begin{align*}
    \|\nabla^{p+1} \theta_h^* \|_{L^\infty(\Omega_*'\backslash\Omega_*)} \lesssim h^{-\frac{p+2}{2-\alpha}} \|\nabla\theta_h\|_{L^2},
  \end{align*}
  which together with \eqref{est4}--\eqref{est5} gives
  \begin{align}\label{est6}
    \|\nabla u\|_{L^2(\Gamma_*)} \|\theta_h^* - I_h \theta_h^*\|_{L^2(\Gamma_*)} \lesssim h^{\frac{p+1}{2-\alpha}} \|\nabla \theta_h\|_{L^2}.
  \end{align}
  Since $\alpha > \alpha_{\rm min} \geq 1-\frac{1}{p}$ implies $\frac{p+1}{2-\alpha}\ge p$, it follows from \eqref{est3} and \eqref{est6} that 
  \begin{align}\label{est:errh1}
    \|\nabla(u-u_h)\|_{L^2} \lesssim h^p.
  \end{align}
  This proves optimal-order convergence of the error in the $H^1$ semi-norm. In order to improve this result to the full $H^1$ norm, it remains to estimate $\|u-u_h\|_{L^2}$. 
  
  First, for $x = (x',x_n) \in \Omega_*$ and $v \in H^1(\Omega_*)$, the application of the Newton--Leibnitz rule leads to 
  \begin{align}\label{eq:nl}
    v(x', x_n) = v(x', \phi_2(x')) + \int_{\phi_2(x')}^{x_n} \frac{\partial v(x',z)}{\partial z} \diff z.
  \end{align}
  Since $u(x', \phi_2(x'))-u_h(x', \phi_2(x'))$ is constant on $\Gamma_2 \cap \partial \Omega_*$, it follows from Lemma \ref{lem:fc} that 
  \begin{align*}
    \big| u(x', \phi_2(x'))-u_h(x', \phi_2(x')) \big| \lesssim \|\nabla(u-u_h)\|_{L^2(\Omega_0)} + \|\varphi-I_h\varphi\|_{L^1(\Gamma)} \lesssim h^p.
  \end{align*}
  Therefore, choosing $v = u-u_h$ in \eqref{eq:nl} yields the following result:
  \begin{align*}
    &\quad\;\|u-u_h\|_{L^2(\Omega_*)}^2 \\
    &\lesssim \|u-u_h\|_{L^\infty(\Gamma_2\cap\partial\Omega_*)}^2\int_{\Omega_*} \diff x + \int_{|x'| < R_*} \diff x' \int_{\phi_2(x')}^{\phi_1(x')}  \Big|\int_{\phi_2(x')}^{x_n} \frac{\partial v(x',z)}{\partial z} \diff z\Big|^2 \diff x_n  \\
    &\lesssim h^{2p} \int_{\Omega_*} \diff x + \int_{|x'| < R_*} \diff x' \int_{\phi_2(x')}^{\phi_1(x')} \bigg(\int_{\phi_2(x')}^{x_n} \bigg|\frac{\partial (u-u_h)}{\partial z}\bigg|^2 \diff z \int_{\phi_2(x')}^{x_n} \diff z\bigg) \diff x_n \\
    &\lesssim h^{2p} + \|\nabla(u-u_h)\|_{L^2(\Omega_*)}^2 \\
    &\lesssim h^{2p},
  \end{align*}
In the subdomain $\Omega_1\backslash\overline\Omega_*$, the same argument shows that (since $u_h\in H^1(\Omega_*)$ and $u_h\in H^1(\Omega_*^c)$, but $u_h\notin H^1(\Omega_1)$, we need to establish the estimates in $\Omega_*$ and $\Omega_1\backslash\overline\Omega_*$, separately)
\begin{align*}
\|u-u_h\|_{L^2(\Omega_1\backslash\overline\Omega_*)}^2 \lesssim h^{2p}.
\end{align*}
Furthermore, the estimate of $\|u-u_h\|_{L^2(\Omega_0)}$ follows by using $u-u_h = (u-I_h u) + (I_h u -u_h)$ and the Poincar{\'e} inequality (because $I_h u -u_h=0$ on $\partial D\subset\partial\Omega_0$). 
This proves the error bound in the $L^2$ norm, i.e.,
\begin{align*}
\|u-u_h\|_{L^2} \lesssim h^{p}.
\end{align*}
This, together with the $H^1$ semi-norm bound in \eqref{est:errh1}, completes the proof of Theorem \ref{thm:error}.
\end{proof}

At the end of this section, we present the main proof of finite element error estimate for the linear elasticity problem.
\begin{proof}[Proof of Theorem \ref{thm:error} {for linear elasticity problem}] 
  We begin with the error equation, which is derived by the difference between FEM \eqref{FEM:elas} and the variational formulation of \eqref{eq:model1}, and tested with functions $v_h \in L^2(\Omega) \cap H^1(\Omega_*) \cap H^1(\Omega_*^c)$ satisfying $v_h|_{\partial D}=0$:
  \begin{equation}\label{eq:error}
    \begin{aligned}
      &\;\int_{\Omega_*^c \cup \Omega_*} \big( \lambda J(x) (\nabla\cdot \theta_h)(\nabla\cdot v_h) + 2\mu (A_h \nabla \theta_h, \nabla v_h) \big) \rd x \\
      = &\;\int_{\Omega_*^c \cup \Omega_*} \big( \lambda (1-J(x)) (\nabla \cdot u)(\nabla\cdot v_h) + 2\mu (e(u) - A_h \nabla u, \nabla v_h) \big) \rd x \\
      &\;\quad +\int_{\Omega_*^c \cup \Omega_*} \big( \lambda J(x) (\nabla \cdot \rho)(\nabla\cdot v_h) + 2\mu (A_h \nabla \rho, \nabla v_h) \big) \rd x \\
      &\;\quad -\int_{\Gamma_*} \big(\lambda (\nabla \cdot u)\vec{n} + 2\mu e(u) \vec{n} \big) \cdot \jump{v_h} \rd s,
    \end{aligned}
  \end{equation}
  where $\theta_h=u_h - I_h u \in S_h^0$ and $\rho=u-I_h u$. By \eqref{est:iso}, it is easy to verify that 
  \begin{align}\label{est:JA}
    \|1-J(x)\|_{L^\infty(\Omega)} \lesssim h^p 
    \quad\mbox{and}\quad
    |A_h \nabla v - e(v)| \lesssim h^p |\nabla v| \;\; \text{in }\Omega.
  \end{align}
  Letting $v_h=\theta_h$ in \eqref{eq:error} gives
  \begin{equation}\label{est:err2}
    \begin{aligned}
      \|\nabla \cdot \theta_h\|_{L^2}^2 + \|\nabla \theta_h\|_{L^2}^2 &\lesssim h^{2p} \big(\|\nabla \cdot u\|_{L^2}^2 + \|\nabla u\|_{L^2}^2 \big) + \|\nabla \cdot \rho\|_{L^2}^2 + \|\nabla \rho\|_{L^2}^2  \\
      & \;\quad + \big( \|\nabla\cdot u\|_{L^2(\Gamma_*)} + \|e(u)\|_{L^2(\Gamma_*)} \big) \big\|\theta_h^* - I_h \theta_h^* \big\|_{L^2(\Gamma_*)},
    \end{aligned}
  \end{equation}
  It remains to analyze the last term in \eqref{est:err2}. It is quite similar to that for the perfect conductivity problem, we only need to consider the Case 2 in three dimensions. The estimates \eqref{est4} and \eqref{est5} still hold for this case, with the only exception being the analysis of $\|\nabla^{p+1}\theta_h^*\|_{L^\infty(\Omega'_* \backslash \Omega_*)}$. We begin our analysis from this point. First, we recall that 
  \begin{align*}
    \theta_h^* = I_q^* \theta_h
    = \sum_{j=1}^2 \sum_{l=1}^{6} (\theta_h)_{l,j} \Lambda_{l,j}^{(q)}(x) = \sum_{j=1}^2 \sum_{l=1}^{6} (\theta_h)_{l,j} \psi_l(x) \sum_{m=1}^q \mathbf{v}_{l,j}^{(m)}(x),
  \end{align*}
  where $(\theta_h)_{l,j}$ are free constants of $\theta_h$ on $\Gamma_j$ and $\Lambda_{l,j}^{(q)}$ are the auxiliary functions defined in \eqref{eq:Iq-elas}. 
  Similar to the proof of Lemma \ref{lem:fc}, noting $\theta_h = u_h - I_h u = 0$ on $\Gamma=\partial D$, we have
  \[
    |(\theta_h)_{l,j}| \lesssim \|\nabla\theta_h\|_{L^1} \lesssim \|\nabla\theta_h\|_{L^2}.
  \]
  By using the estimates for $\mathbf{v}_{l,j}^{(m)}$ in \cite[Sections 2 and 4]{DLTZ}, we have
  \begin{equation}\label{est:th*-1}
    |\nabla^t \mathbf{v}_{l,j}^{(m)}(x)| \lesssim \delta(x')^{-\frac{t+1}{2}} \lesssim h^{-\frac{t+1}{2-\alpha}} \quad\text{for all }t \geq 1 \text{ and } x=(x',x_n) \in \Omega_*'\backslash\Omega_*.
  \end{equation}
  Moreover, noting $|\nabla^t \psi_l(x',\phi_j(x'))| \lesssim |\nabla^t \phi_j(x')| \lesssim 1+|x'|^{2+\gamma-t} \lesssim 1+h^{\frac{2+\gamma-t}{2-\alpha}}$, we get
  \begin{align}
    \| \nabla^{p+1} \theta_h^* \|_{L^\infty(\Omega'_* \backslash \Omega_*)} \lesssim (h^{-\frac{p+2}{2-\alpha}}+h^{-\frac{p-\gamma}{2-\alpha}}) \| \nabla\theta_h \|_{L^2} \lesssim  h^{-\frac{p+2}{2-\alpha}} \| \nabla\theta_h \|_{L^2},
  \end{align}
  which gives
  \begin{equation*}
    \begin{aligned}
      \big( \|\nabla\cdot u\|_{L^2(\Gamma_*)} + \|e(u)\|_{L^2(\Gamma_*)} \big) \big\|\theta_h^* - I_h \theta_h^* \big\|_{L^2(\Gamma_*)} 
      \lesssim h^{\frac{p+1}{2-\alpha}} \| \nabla \theta_h \|_{L^2} 
      \lesssim h^p \| \nabla \theta_h \|_{L^2},
    \end{aligned}
  \end{equation*}
  Then, by \eqref{est:err2}, we arrive at
  \begin{align}
    \|\nabla \cdot (u-u_h) \|_{L^2} + \|\nabla (u-u_h)\|_{L^2} \lesssim h^p. \label{est:grad-err}
  \end{align}
  The $L^2$-estimate is obtained similarly to the perfect conductivity problem; the details are omitted for brevity, and this completes the proof of Theorem \ref{thm:error} for the linear elasticity problem.
\end{proof}

\section{Numerical tests}\label{sec:tests}
In this section, we provide some numerical tests to validate the theoretical results. All the codes are written by Firedrake (an open-source finite element package \cite{hkm+23,bmh+16}). 
%
The finite element solutions $\hat u_{h_i}$ are solved under a series of graded meshes with decreasing sizes $h \in \{h_i : h_i > h_{i+1}\}$. The relative $H^j$-errors are defined by
\[
\mbox{Relative $H^j$-error} = \frac{\|\hat u_{h_i} - \hat u_{h_{i+1}}\|_{H^j(\Omega)}}{\| \hat u_{h_{i+1}} \|_{H^j{(\Omega)}}}, \quad j=0,1.
\]

\subsection{Perfect conductivity}
We first give some numerical tests for the perfect conductivity problem in both two and three dimensions. 
The right-hand-side function is given by the linear potential:
\begin{align*}
  \varphi(x) = 
  \begin{cases}
    x_2 - x_1,       & n=2, \\
    x_3 - x_2 - x_1, & n=3.
  \end{cases}
\end{align*}

\begin{example}[Convergence rates] 
	Let $D=\mathcal{B}_3$ denote a disk with radius $r=3$ and centered at origin $(0,0)$, and let the inclusions $D_1$ and $D_2$ be two disks with radius $r=1$ and centered at $(0,r+\vep/2)$ and $(0,-r-\vep/2)$, respectively. 
	The mesh sizes are $h \in \{2^{-2}, 2^{-3}, \cdots, 2^{-6}\}$. 
	We take $\varepsilon=0.1$ and $\varepsilon=10^{-5}$, respectively. The refinement parameters are chosen as $\kappa=1$, with $\alpha$ taking various values that satisfy the condition stated in Theorem \ref{thm:error}. 
  As discussed in previous section, the graded meshes are constructed according to Case 1 when $\varepsilon=0.1$, and to Case 2 when $\varepsilon=10^{-5}$. 
  Let the finite element orders be $p=1,\cdots,5$. 
	The order $q$ of interpolation in the narrow region is chosen as discussed in Remark \ref{rmk:pql}, i.e.,
	we set $q=1$ if $p \leq 2$ and $q=2$ if $3 \leq p \leq 5$.

  The log-log plots of the relative $H^1$-errors of finite element solutions are presented in Figure \ref{fig:errors_p5}. 
  It is shown that the $H^1$-errors behave as $O(h^p)$ when $\varepsilon=0.1$ and $\alpha=1$, which coincides with the optimal rate. 
  For the case of $\varepsilon=10^{-5}$ and $\alpha = 1.35$ satisfying that $\alpha \in (2-\frac{7}{2p}, \frac{3}{2})$, 
  the convergence rates are still optimal in this case. 
  These results show that the proposed high-order FEM is uniformly optimally convergent for arbitrary small $\varepsilon$, and it can be also observed that the convergence errors are independent of $\varepsilon$.

	\begin{figure}
		\centering
		\begin{minipage}[c]{0.48\textwidth}
			\includegraphics[width=\textwidth]{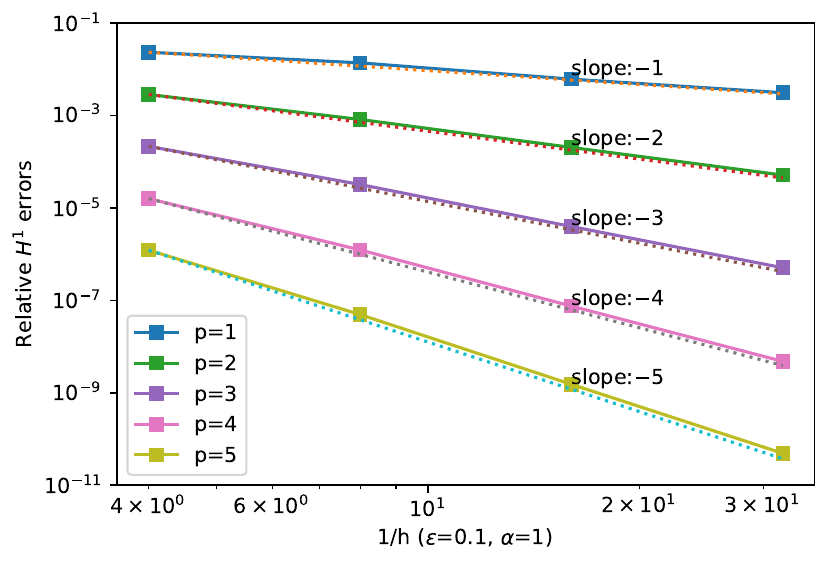}
		\end{minipage}
		\begin{minipage}[c]{0.48\textwidth}
			\includegraphics[width=\textwidth]{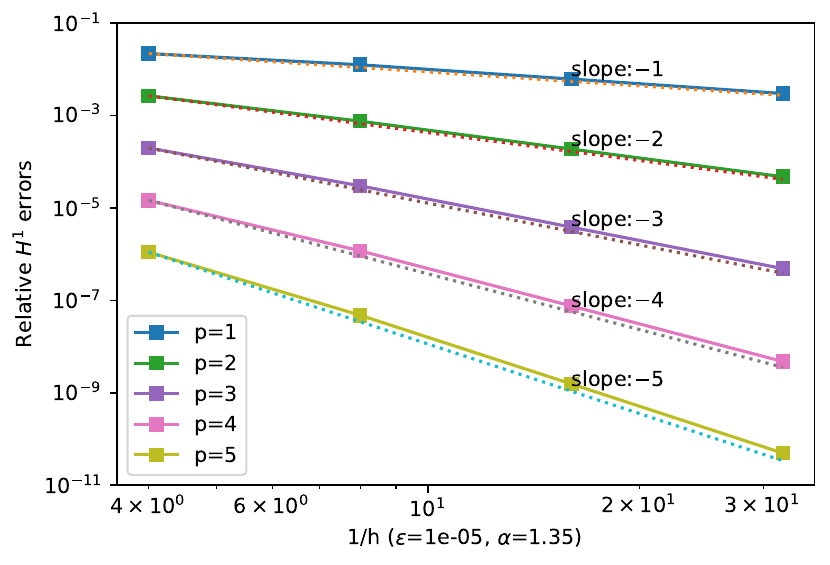}
		\end{minipage}
		\caption{Left: $\vep=0.1$ and $\alpha =1$; Right: $\vep=10^{-5}$ and $\alpha=1.35$.}
		\label{fig:errors_p5}
	\end{figure}

  In Figure \ref{fig:compare}, we plot the errors for the case that only linear interpolation is used in $\Omega_*$, i.e., $q=1$ in \eqref{eq:Ih*}. 
  It can be observed that for high-order elements $(p > 3)$, the choice $q = 1$ fails to guarantee optimal convergence rates, even when mesh parameter $\alpha$ reaches its theoretical supremum $1+\frac{1}{n}=1.5$.

	\begin{figure}
		\centering
		\begin{minipage}[c]{0.48\textwidth}
			\includegraphics[width=\textwidth]{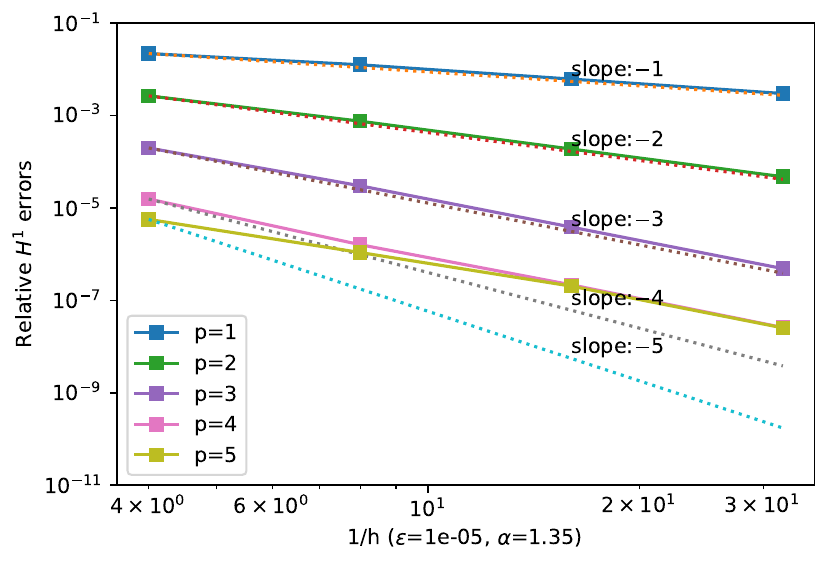}
		\end{minipage}
		\begin{minipage}[c]{0.48\textwidth}
			\includegraphics[width=\textwidth]{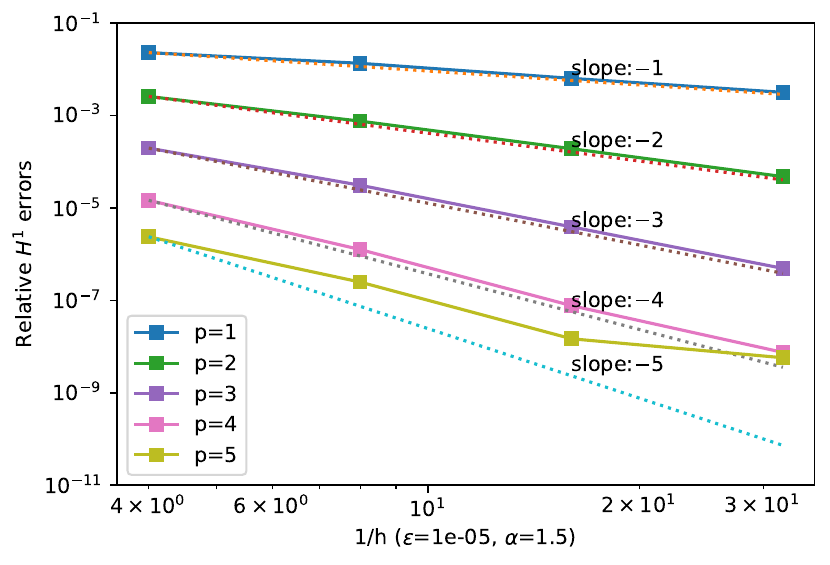}
		\end{minipage}
		\caption{Fixed $q=1$. Left: $\vep=10^{-5}$ and $\alpha=1.3$; Right: $\vep=10^{-5}$ and $\alpha=1.5$.}
		\label{fig:compare}
	\end{figure}

  \begin{table}[htbp]
    \centering
    \caption{Condition numbers for different orders $p$ and mesh sizes $h$}
    \label{tab:condn}
    \begin{tabular}{c c c c c}
    \toprule
    & $h=2^{-2}$ & $h=2^{-3}$ & $h=2^{-4}$ & $h=2^{-5}$ \\
    \midrule
    $p=1$ & $4.0844\times 10^{4}$ & $1.6733\times 10^{5}$ & $7.5867\times 10^{5}$ & $3.8597\times 10^{6}$ \\
    $p=2$ & $1.8387\times 10^{5}$ & $7.9454\times 10^{5}$ & $4.1513\times 10^{6}$ & $2.7225\times 10^{7}$ \\
    $p=3$ & $4.4647\times 10^{5}$ & $2.0650\times 10^{6}$ & $1.2322\times 10^{7}$ & $9.4483\times 10^{7}$ \\
    $p=4$ & $8.5183\times 10^{5}$ & $4.2318\times 10^{6}$ & $2.7989\times 10^{7}$ & $2.3620\times 10^{8}$ \\
    \bottomrule
    \end{tabular}
  \end{table}
  \begin{figure}[thbp]
    \centering
    \includegraphics[height=4cm]{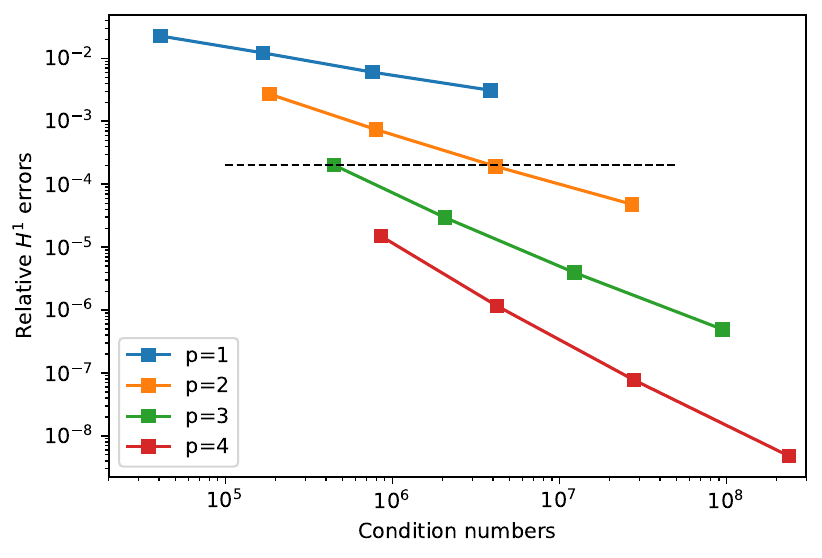}
    \caption{$H^1$ errors versus condition numbers: for each fixed $p$, the points (from left to right) correspond to successive mesh refinements, i.e., $h=2^{-2},h^{-3},h^{-4},h^{-5}$.}
    \label{fig:condn}
  \end{figure}
  Finally, we report the condition numbers for this example in Table \ref{tab:condn}, and plot the relative $H^1$ errors versus the condition numbers in Figure \ref{fig:condn}. It is observed that the condition number increases as $h$ decreases and $p$ increases. Moreover, to achieve the same level of accuracy, higher-order methods yield significantly smaller condition numbers than lower-order methods. The design of efficient preconditioners for the proposed FEM, which combines high-order elements, graded meshes, and auxiliary basis functions, remains an open and challenging problem that we plan to investigate in future work.

\end{example}

\begin{example}
  In this example, we consider an asymmetric configuration in which the two inclusions are a circle and an ellipse, respectively. Figure \ref{fig:asym} displays the convergence rates and the numerical solution. The results demonstrate that optimal convergence rates are achieved even for $\varepsilon=10^{-5}$, indicating that the proposed high-order FEM remains effective for asymmetric geometries.
  \begin{figure}[thbp]
    \begin{minipage}[c]{0.48\textwidth}
      \includegraphics[width=\textwidth]{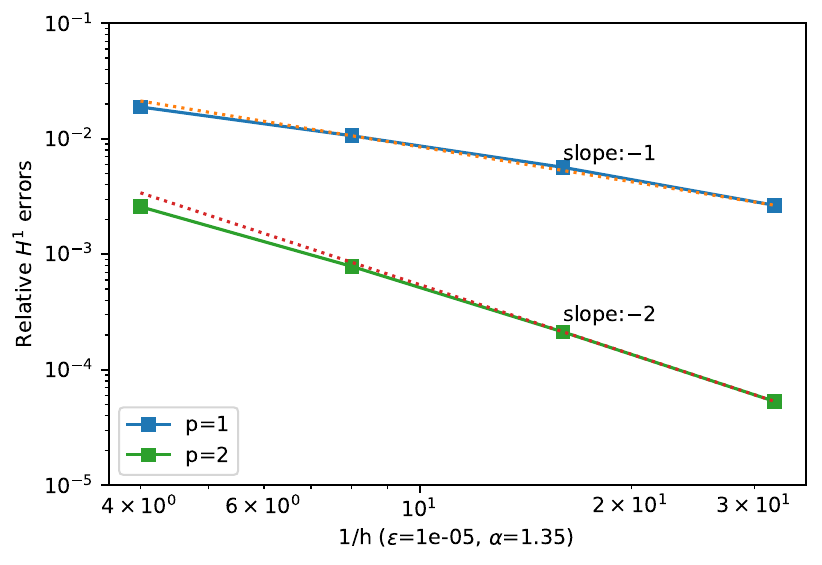}  
    \end{minipage}
    \begin{minipage}[c]{0.48\textwidth}
      \includegraphics[width=\textwidth]{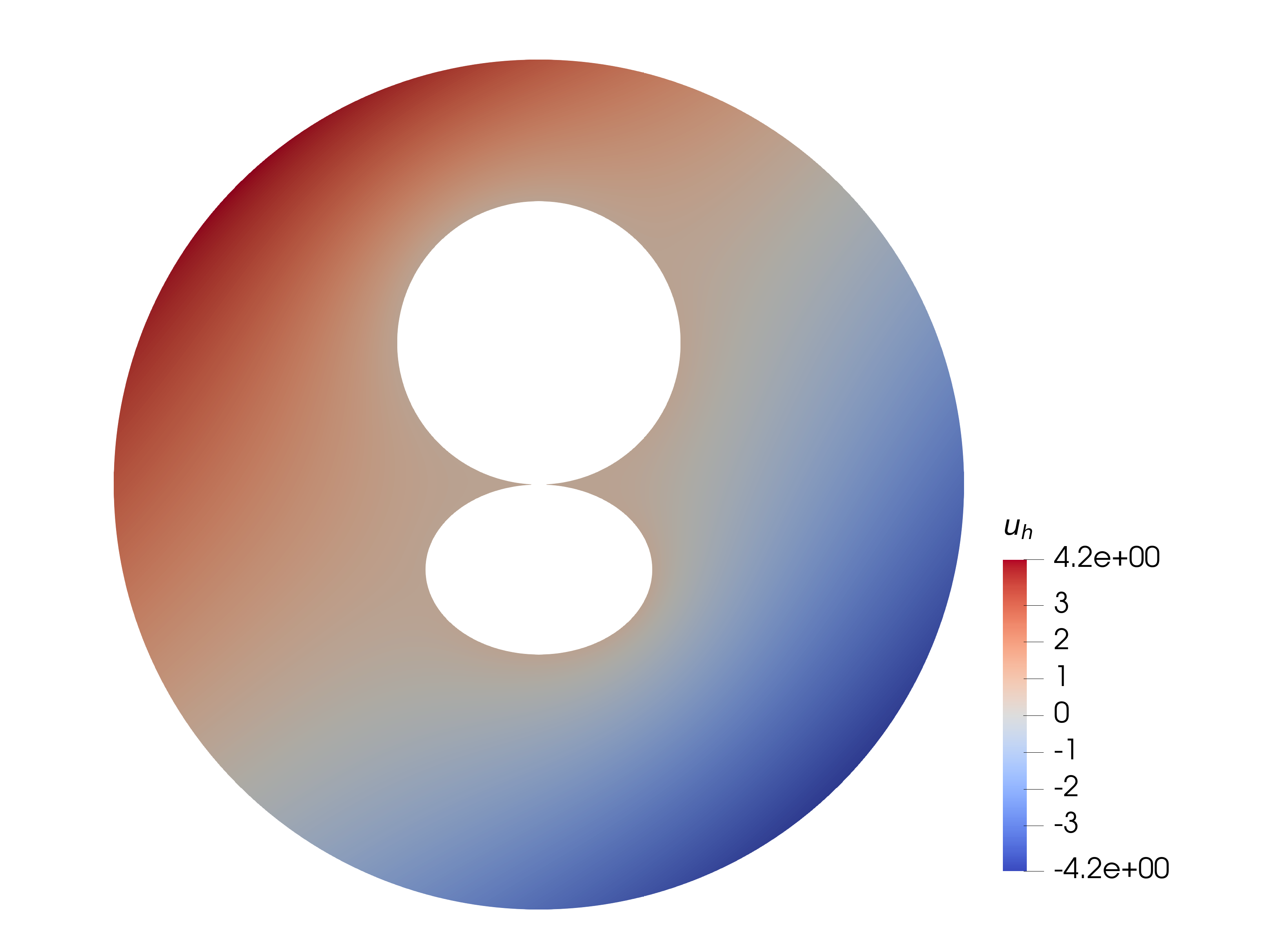}
    \end{minipage}
		\caption{Left: convergence rates; Right: numerical solution when $p=2$ and $h=2^{-6}$.}
		\label{fig:asym}
	\end{figure}
\end{example}

\begin{example}[Blow-up of gradient of solutions] 
  Let $D=\mathcal{B}_{2.4}$ be a disk, and let the inclusions in $D$ be two ellipses with long and short axes $(1,0.8)$. 
  We take $\varepsilon=0.1$ and $\varepsilon=10^{-5}$, respectively. 
  In Figure \ref{fig:exm2D:grad}, we plot the gradients of the finite element solutions, 
  which clearly shows the blow-up behaviour for small $\varepsilon$ near the close-to-touching point. 
  \begin{figure}[t]
    \centering
    \includegraphics[width=\textwidth]{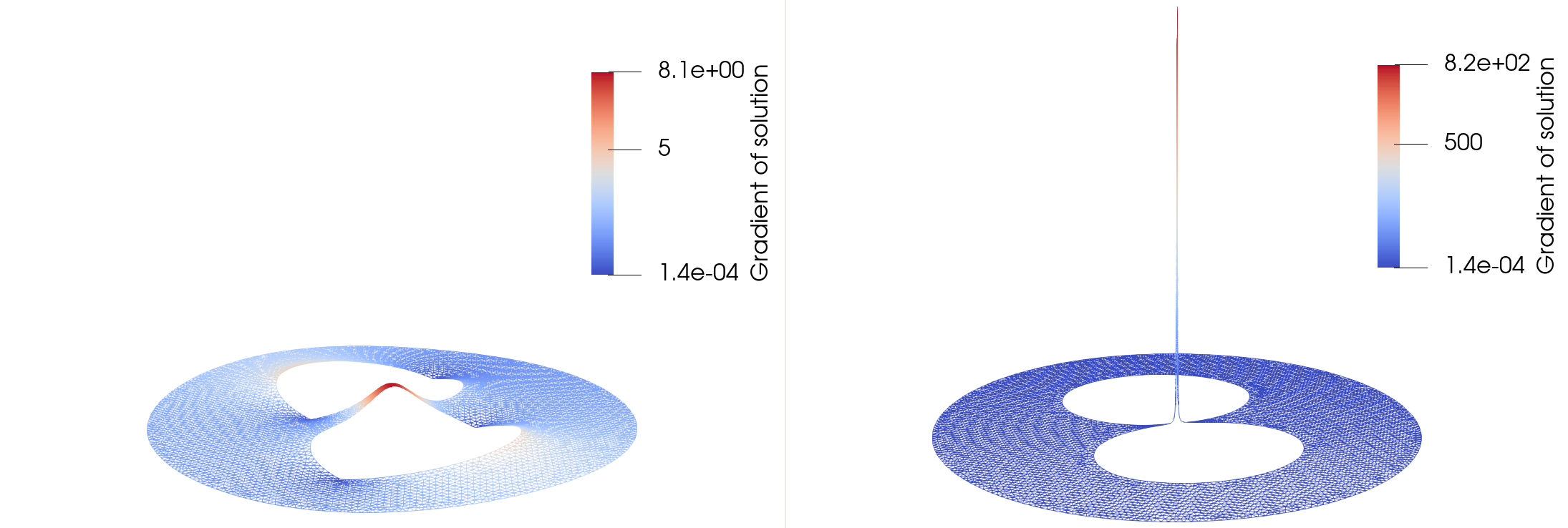}
    \caption{The gradients of solutions $|\nabla \hat u_h|$ for $\varepsilon=0.1$ (left graph) and $\varepsilon=10^{-5}$ (right graph) with $p=3$ and $h=1/16$.}
    \label{fig:exm2D:grad}
  \end{figure}
\end{example}

\begin{example}[Ellipsoidal inclusions in 3D] 
  Let the domain $D=\mathcal{B}_{2.1}$ be a ball, and let the inclusions be two ellipsoidal inclusions with axes $(1,1,0.7)$. The mesh parameters are $\kappa=1$ and $\alpha=1.33$. 
  Figure \ref{fig:exm3D:mesh} shows the graded mesh for $h=0.2$ and $\varepsilon=10^{-5}$.
  The relative $H^1$-errors are plotted in Figure \ref{fig:exm3D}, which indicates the optimal rate in $H^1$-norm is $O(N^{-p/3})$ for $p=1,2$. 

  \begin{figure}[t]
    \centering
    \begin{minipage}[c]{0.4\textwidth}
      \includegraphics[width=\textwidth]{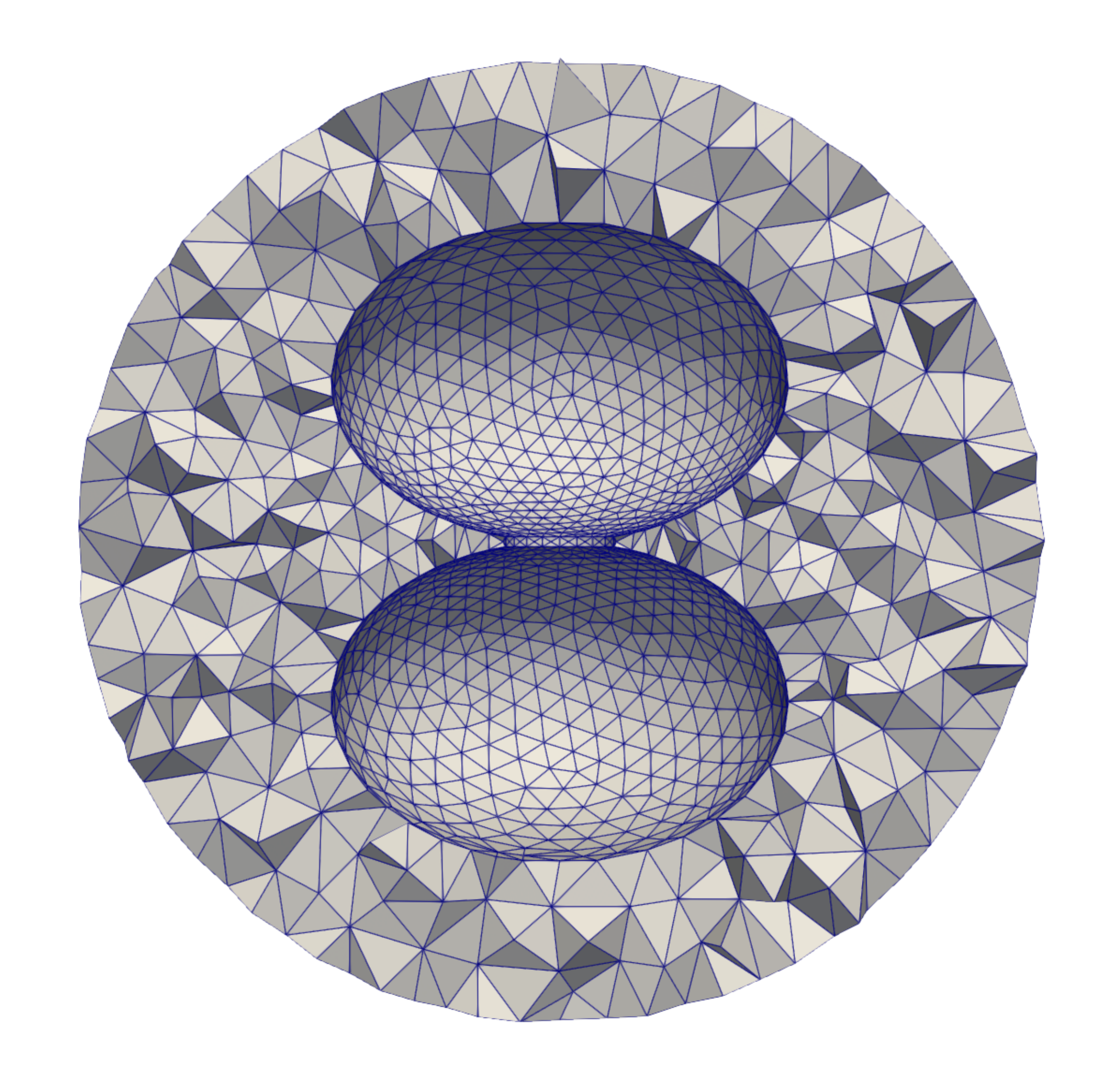}
    \end{minipage}\hspace{1cm}
    \begin{minipage}[c]{0.4\textwidth}
      \includegraphics[width=\textwidth]{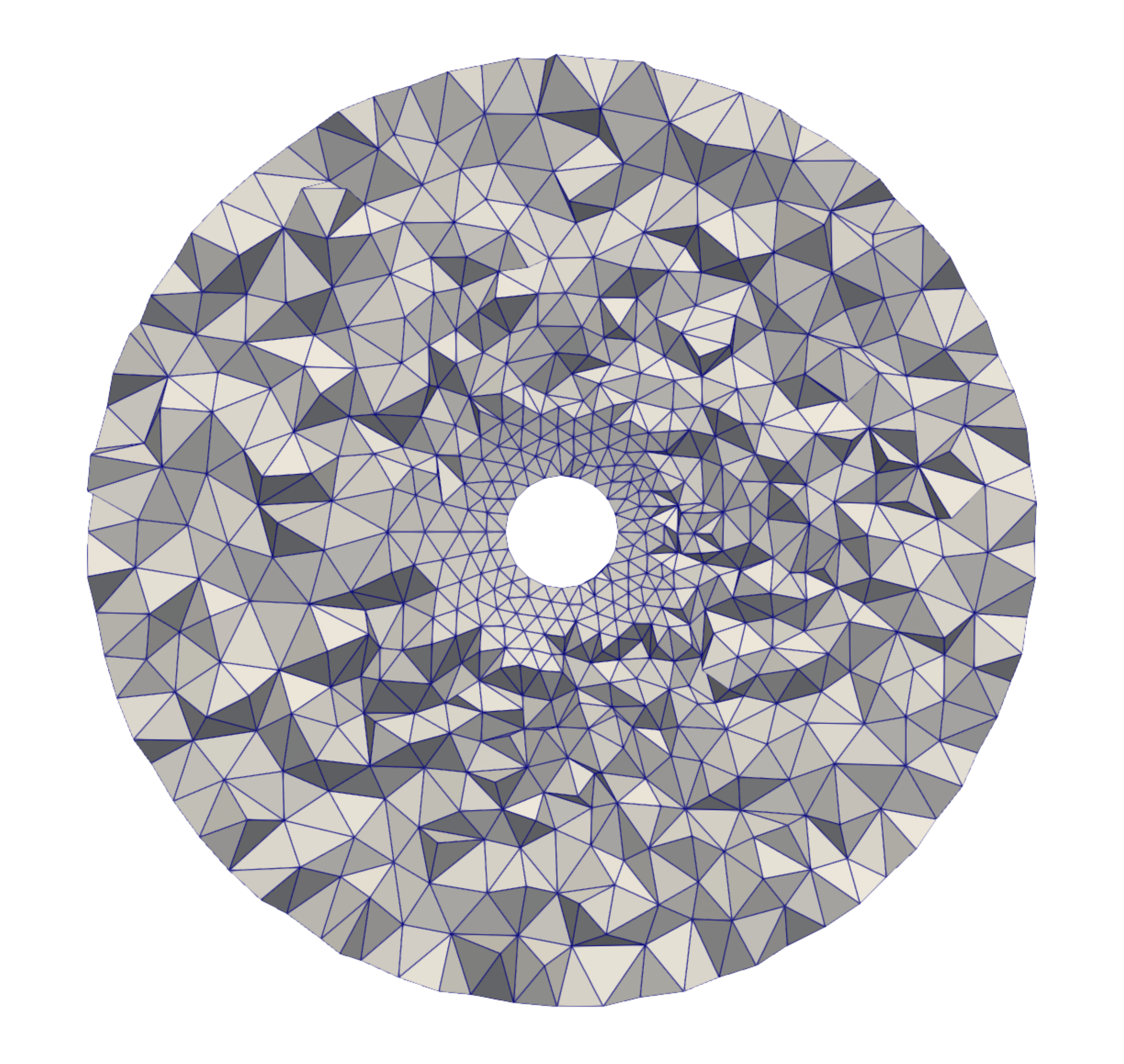}
    \end{minipage}
    \caption{The slices of the graded mesh near $x=0$ (left) and $z=0$ (right).}
    \label{fig:exm3D:mesh}
  \end{figure}
  \begin{figure}[t]
    \centering
    \includegraphics[width=0.8\textwidth]{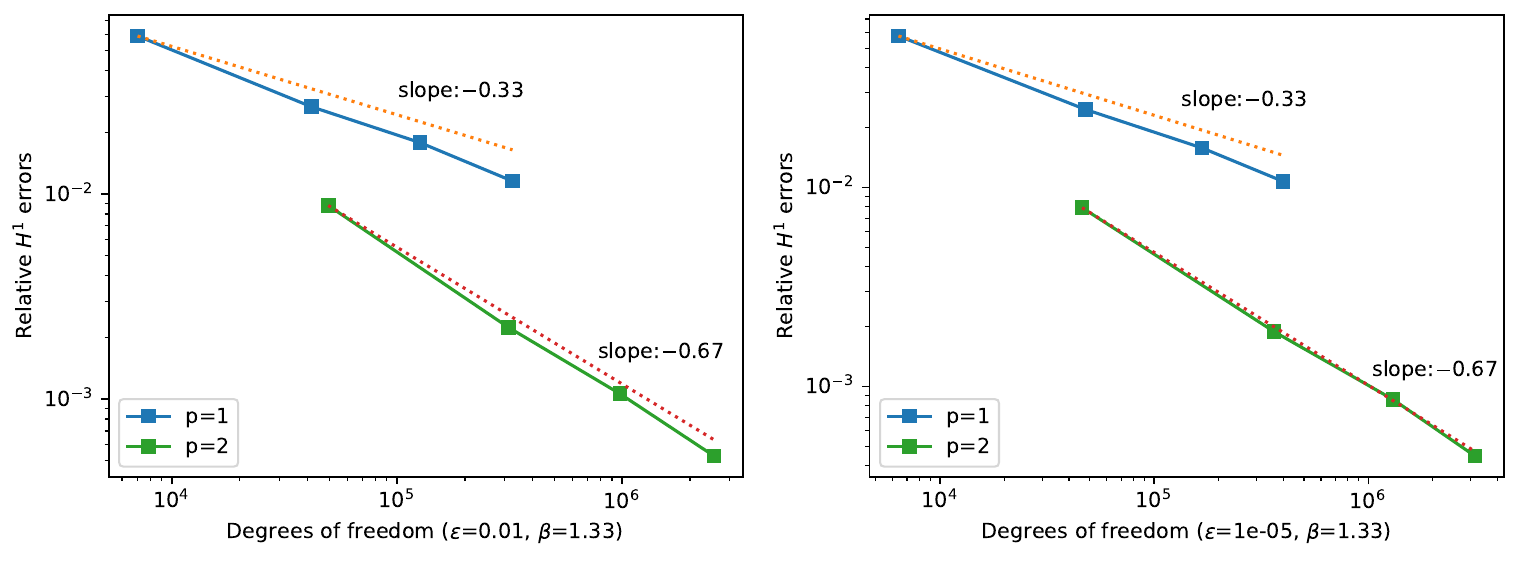}
    \caption{The relative $H^1$-errors for $\varepsilon=0.01,10^{-5}$.}
    \label{fig:exm3D}
  \end{figure}
\end{example}

\subsection{Linear elasticity}
Next, we give some numerical tests for the linear elasticity problem. 

\begin{example}[Elliptical inclusions in 2D]
  Let the domain $D=\mathcal{B}_{2.4}$ be a disk. The inclusions are both ellipses with axes $(1,0.8)$. 
  We choose $\varepsilon=0.1$ and $\varepsilon=10^{-5}$. Let $\kappa=1$ and $p=1,2,3$. 
  The vector-valued right-hand-side function is given by
  \[
    \varphi(x) = (x_2 - x_1, x_2 + x_1)^\text{T}.
  \]
  Figure \ref{fig:high-order} presents the relative $H^1$-errors for different values of $\alpha$. 
  For $\alpha=1.5$, the proposed method converges at optimal rate $O(h^p)$ for $p \leq 2$, and again, the errors are independent of $\vep$.  
  While for $p = 3$, the convergent rate is slightly worse than $O(h^3)$ when $\vep$ is small. 
  For $\alpha=1$, only the case of $p=1$ reaches the optimal convergence rate. This indicates that the choice of $\alpha$ does affect the convergence result for proposed high-order FEM. 
  These findings are in agreement with our theoretical analysis in Theorem \ref{thm:error}.
  \begin{figure}[t]
    \centering
    \begin{minipage}[c]{0.9\textwidth}
      \centering
		  \includegraphics[width=\textwidth]{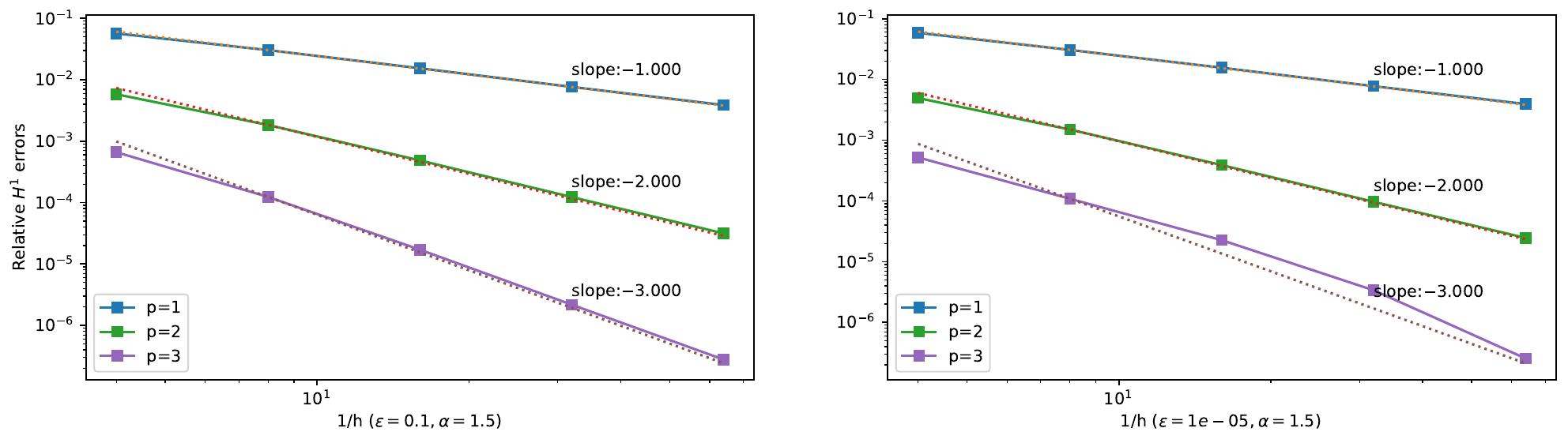}
    \end{minipage}\,
    \begin{minipage}[c]{0.9\textwidth}
      \centering
		  \includegraphics[width=\textwidth]{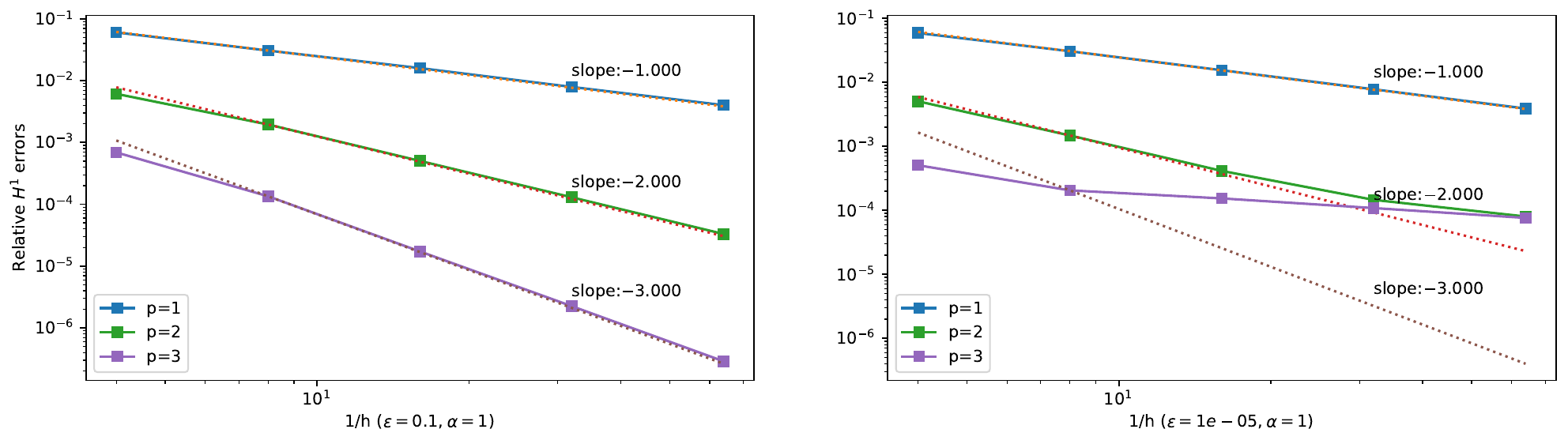}
    \end{minipage}
    \caption{The relative $H^1$-errors for $\varepsilon=0.1,10^{-5}$ and $\alpha=1,1.5$, respectively.}
    \label{fig:high-order}
  \end{figure}
\end{example}

\section{Proof of Proposition \ref{thm:v1est}}\label{sec:highorderest}
In this section, we present the proof of Proposition \ref{thm:v1est}, which directly implies Theorem \ref{thm_highorder}. The latter is used in designing a high-order convergent FEM for solving \eqref{PDE}. 

\begin{proof}[Proof of Proposition \ref{thm:v1est}]
First, we give some results for the auxiliary function $\tilde v_l = \sum_{k=1}^{l} \bar v_k$ and estimate the term $\nabla^l \tilde v_q(x)$ in the first result \eqref{est:v1} of Proposition \ref{thm:v1est}.
For the function $\bar v_1$ defined in \eqref{def-bar-v1}, direct calculation yields the following estimates for $1\le j\le l-1$ and $x \in \Omega_{1/2}$: 
\begin{equation}\label{nablagjdsv1}
  |\bar{ v}_{1}|\lesssim 1,\quad |\nabla_{x'}^j \bar{ v}_{1}|\lesssim \delta(x')^{-j/2},\quad |\nabla_{x'}^{j-1}\partial_{x_n} \bar{ v}_{1}|\lesssim \delta(x')^{-(j+1)/2},\quad \partial_{x_n}^2 \bar{ v}_{1}=0.
\end{equation}
Using \eqref{defgreenfunction3d} and some direct calculations, we have
\begin{align}\label{estgreem}
  \begin{split}
    &|G(y,x_n)|\lesssim \delta(x'), \quad |\nabla_{x'}G(y,x_n)|\lesssim \delta(x')^{1/2},\quad|\partial_{x_n}G(y,x_n)|\lesssim 1, \quad\text{and}\\
    &|\nabla_{x'}^jG(y,x_n)|\lesssim \delta(x')^{(2-j)/2},\quad 1_{y\neq x_n}\cdot\partial_{x_n}^jG(y,x_n)=0\quad\mbox{for}~2\le j\le l+1.
  \end{split}
\end{align}
By mathematical induction, the relationships \eqref{eq:vk}, \eqref{scfest}, and the estimates \eqref{nablagjdsv1}--\eqref{estgreem}, we can prove the following estimates (details are omitted): for $k=1,2,\dots,l$,
\begin{align}
  |{\bar v}_k(x',x_n)| \lesssim \delta(x')^{k-1}, \quad |f_{k}(x',x_n)|=|\Delta_{x'}\bar{v}_k|\lesssim \delta(x')^{k-2} \quad \text{in}~\Omega_{1/2},\label{scfkdgj} 
\end{align}
and for $j\ge 0$,
\begin{align}\label{scxydest2}
  &\begin{aligned}
    &|\nabla_{x'}^j\partial_{x_n}^s{\bar v}_k(x',x_n)|\lesssim \delta(x')^{\frac{2k-2s-j-2}{2}}  & & \text{for}~0\le s\le 2k-1, \\
    &\partial_{x_n}^{s}{\bar v}_k(x',x_n)=0  & & \text{for}~s\ge 2k, \\
    &|\nabla^s f_{k}(x',x_n)|\lesssim \delta(x')^{k-2-s}  & & \text{for}~1 \leq s \leq k-1
  \end{aligned}\quad \text{in}~\Omega_{1/2}.
\end{align}
Using \eqref{scxydest2}, we can derive that
\begin{equation}\label{zx2dv1l1}
  \begin{aligned}
    |\nabla^l{\bar v}_{k}(x)| &\lesssim \delta(x')^{-\frac{l+1}{2}} \text{ for }k \leq l/2; \;\; |\nabla^l{\bar v}_{k}(x)| \lesssim \delta(x')^{k-l-1} \text{ for }k \geq {(l+1)}/{2}.
  \end{aligned}
\end{equation}
Since ${\tilde v}_q=\sum_{k=1}^q {\bar v}_k$, as defined in \eqref{tvl}, the first result of \eqref{zx2dv1l1} implies that 
\begin{align}\label{est:tvl}
|\nabla^l {\tilde v}_q(x',x_n)|\lesssim \delta(x')^{-\frac{l+1}{2}},\quad (x',x_n)\in \Omega_{1/4}\quad\mbox{for all}\,\,\, l,q\ge 1
\end{align}

Next, we consider the difference $w_l = v_1-\tilde v_l$. From \eqref{equ-v1} and \eqref{eq:tvl} we see that $w_l$ satisfies the following equation: 
\begin{equation}\label{equ_w}
  \Delta w_{l}=-\Delta \tilde{v}_{l}=f_{l} \;\;\mbox{in }\Omega_{1/2}; \quad w_l = 0\;\;\text{on }\Gamma^\pm_{1/2}.
\end{equation}
Then, we estimate the term $\nabla^l v_1(x)$ in \eqref{est:v1} by using the following lemma, which is the second result \eqref{est:v1-tv1} of Proposition \ref{thm:v1est}. Using the Sobolev embedding theorem, the estimate for $w_l$ in the following lemma holds. Due to its technical nature, the proof of this lemma is presented in SM1 in the supplemental material for interest readers. 
\begin{lemma}\label{propnew}
Under the conditions of Proposition \ref{thm:v1est}, the solution of \eqref{equ_w} satisfies the following estimate for $0 \leq s \leq l$: 
  \begin{align*}
    |\nabla^{s}{w}_{l}(x) | \lesssim \delta(x')^{l-s},\quad x=(x',x_n) \in \Omega_{1/4}.
  \end{align*}
\end{lemma}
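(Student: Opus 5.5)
The plan is to prove the bound for $w_l=v_1-\tilde v_l$ by the standard ``narrow-region'' technique of \cite{LLBY,Li20,LLY}. We first obtain a local $L^2$ energy estimate on thin boxes adapted to $\delta(x')$. We then rescale to unit size and apply $W^{s,p}$/Schauder estimates, which turns the energy bound into the pointwise bound $\delta^{l-s}$ for every $0\le s\le l$. The input is \eqref{equ_w}: $\Delta w_l=f_l$ in $\Omega_{1/2}$ and $w_l=0$ on $\Gamma^\pm_{1/2}$. The source satisfies $|f_l|\lesssim\delta^{l-2}$ and $|\nabla^s f_l|\lesssim\delta^{l-2-s}$ by \eqref{scfkdgj}--\eqref{scxydest2}. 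Away from the narrow region, i.e.\ on $\partial\Omega_{1/2}\cap\{|x'|=1/2\}$, $w_l$ is bounded with all derivatives by standard elliptic regularity together with \eqref{est:v1} for $\tilde v_l$.

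\emph{Step 1 (global energy bound).} We will show $\int_{\Omega_{1/2}}|\nabla w_l|^2\lesssim1$. To do this we cut off with a smooth $\eta(x')$ equal to $1$ on $|x'|\le 1/4$, test \eqref{equ_w} by $\eta^2 w_l$, and use the Poincar\'e inequality in the $x_n$-direction, $\int|w_l|^2dx_n\lesssim\delta^2\int|\partial_n w_l|^2dx_n$, which is available since $w_l$ vanishes on both $\Gamma^\pm$. This absorbs the term $\int f_l\,\eta^2 w_l$; it requires $\int\delta^2 |f_l|^2\lesssim\int\delta^{2l-2}<\infty$, which holds.

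\emph{Step 2 (iterated local energy on boxes).} For $|z'|\le1/4$ we set $\Omega_t(z')=\{x\in\Omega:|x'-z'|<t\}$. Combining Caccioppoli with the vertical Poincar\'e inequality gives
\[
F(t)\le\Big(\tfrac{C\delta(z')}{s-t}\Big)^2F(s)+C(s-t)^{-2}\!\!\int_{\Omega_s}\!\delta^2|f_l|^2\dx ,\qquad F(t):=\int_{\Omega_t(z')}|\nabla w_l|^2,
\]
for $t<s$. We iterate this with step $s-t\sim C\delta(z')$, roughly $\delta(z')^{-1/2}$ times, starting from Step 1. This is the standard argument; the point is that $\delta(x')\sim\delta(z')$ for $|x'-z'|\lesssim\sqrt{\delta(z')}$. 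The result is $\int_{\Omega_{\delta(z')}(z')}|\nabla w_l|^2\lesssim\delta(z')^{n}\cdot\delta(z')^{2(l-1)}$, i.e.\ an averaged gradient of size $\delta^{l-1}$.

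\emph{Step 3 (rescaling and higher derivatives).} We change variables $x'-z'=\delta(z')y'$, $x_n=\delta(z')y_n$, which maps $\Omega_{\delta(z')}(z')$ onto a domain of unit size whose top and bottom boundaries are uniformly $C^{l,\gamma}$. The rescaled $W(y)=w_l(x)$ solves $\Delta W=\delta^2 f_l$ with zero Dirichlet data on the top and bottom. We then apply boundary $W^{k,p}$ estimates for $k\le l$ with a Sobolev embedding, or Schauder estimates, which need $\|\delta^2 f_l\|_{C^{l-2,\gamma}}\lesssim\delta^{l}$ in $y$; this scaling is exactly what \eqref{scxydest2} gives. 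Together with Step 2 and the Poincar\'e bound $\|W\|_{L^2}\lesssim\delta^{l}\cdot|{\rm box}|^{1/2}$, this yields $|\nabla_y^sW|\lesssim\delta^{l}$. Scaling back gives $|\nabla^s w_l|\lesssim\delta^{l-s}$.

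The main obstacle is Step 3 at the top orders. Estimates \eqref{scxydest2} control $\nabla^s f_l$ only for $s\le l-1$, which is just barely enough for $C^{l-2,\gamma}$ (or $W^{l-2,p}$) control of $\Delta W$. Moreover, the boundary regularity of $\phi_j$ is limited by $\varphi\in C^{l,\gamma}$ and the smoothness of $\Gamma_j$. If the Hölder seminorm falls short, I would instead run the argument for $w_{l+1}$, whose source has one extra order of smallness, and use $w_l=w_{l+1}+\bar v_{l+1}$ together with $|\nabla^s\bar v_{l+1}|\lesssim\delta^{l-s}$, which follows from \eqref{scxydest2}.
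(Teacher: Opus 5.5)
Your proposal is correct and follows essentially the same route as the paper. It has a global energy bound, then a Caccioppoli--Poincar\'e iteration on boxes of width $\sim\delta(z')$ giving $\int_{R(z')}|\nabla w_l|^2\lesssim\delta(z')^{2l+n-2}$ (Lemma~\ref{jbnl}), then rescaling to a unit box with boundary $W^{k,p}$ estimates under zero Dirichlet data on top and bottom (Lemma~\ref{w1infty}); there the bounds on $\nabla^m f_l$ for $m\le l-1$ are exactly what is needed.

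One slip needs fixing. In your Step~2 display the source term should be $C\int_{\Omega_s(z')}\delta^2|f_l|^2$, which is what absorbing $\int f_l\eta^2 w_l$ via the vertical Poincar\'e inequality gives, or equivalently the paper's $C(s-t)^2\int_{\Omega_s(z')}|f_l|^2$. With the factor $(s-t)^{-2}$ as written and $s-t\sim\delta(z')$, you lose $\delta(z')^2$ and only get $\delta(z')^{2l+n-4}$, not the $\delta(z')^{2l+n-2}$ you state.
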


Now, choosing $s=l$ in Lemma \ref{propnew} immediately yields $\|\nabla^{l}({v}_{1}-\tilde{v}_l)\|_{L^\infty(\Omega_{1/4})}\leq C$, indicating that $\tilde v_l$ captures all the singularities of $v_1$ up to the $l$-th order derivatives.
This, together with \eqref{est:tvl}, yields
  \begin{align*}
  |\nabla^l{v}_1(x',x_n)|\lesssim |\nabla^l {\tilde v}_l(x',x_n)|+C\lesssim \delta(x')^{-\frac{l+1}{2}},\quad (x',x_n)\in \Omega_{1/4}.
  \end{align*}
The proof of Proposition \ref{thm:v1est} is completed.
\end{proof}

%
%

\begin{remark}
It follows from \eqref{geo2} that, for $m\ge 2$, 
\begin{equation}\label{zbgj}
    \phi_{2}(x')\lesssim (\varepsilon+|x'|^2)\lesssim\delta(x'),\,\,|\partial_{x_i}\delta(x')|\lesssim|x'|\lesssim\delta(x')^{1/2},\,\,|\nabla_{x'}^m\phi_{2}(x')|,|\nabla_{x'}^m \delta(x')|\lesssim 1.
\end{equation}
Recalling that
  $$\bar{v}_{1}(x',x_n):=\frac{x_n-\phi_{2}(x')}{\delta(x')} \quad \text{for}~x\in\Omega_{1/2}.$$
A direct calculation yields, for $1\le i\le n-1$,
\begin{equation}\label{bd1}
    \partial_{x_i}\bar{v}_{1}=\frac{-\partial_{x_i}\phi_{2}(x')\delta(x')-\partial_{x_i}\delta(x')(x_n-\phi_{2}(x'))}{\delta(x')^2}.
\end{equation}
Thus, by \eqref{zbgj}, we obtain
\begin{equation*}
   |\bar{ v}_{1}|\lesssim 1,\quad   |\partial_{x_i}\bar{v}_{1}|\lesssim \delta(x')^{-1/2},
\end{equation*}
We now prove by induction that, for $j\ge1$,
\begin{equation}\label{gn1}
  \partial_{x_i}^j \bar{v}_{1}(x',x_n)=\frac{P_1^j(x')x_n+P_2^j(x')}{\delta(x')^{j+1}}, 
\end{equation}
where
\begin{equation}\label{dtgs1-1}
    P_\alpha^{j}(x')=\partial_{x_i}P_\alpha^{j-1}(x')\delta(x')-j\partial_{x_i}\delta(x')P_\alpha^{j-1}(x'), \quad \alpha=1,2.
\end{equation}
In particular,
\begin{equation*}
    P_\alpha^{2}(x')=\partial_{x_i}P_\alpha^{1}(x')\delta(x')-j\partial_{x_i}\delta(x')P_\alpha^{1}(x'),\quad \alpha=1,2,
\end{equation*}
where
\begin{equation}
    P_1^{1}=-\partial_{x_i}\delta(x'), \quad  P_2^{1}=\partial_{x_i}\delta(x')\phi_{2}(x')-\partial_{x_i}\phi_{2}(x')\delta(x').
\end{equation}
By \eqref{zbgj} again, we have
\begin{equation*}
   |\partial_{x_i}^lP_1^{2}|\lesssim \delta(x')^{1-l/2},\quad  |\partial_{x_i}^lP_2^{2}|\lesssim \delta(x')^{2-l/2},\quad l=0,1.
\end{equation*}
Combining this with the recurrence relation \eqref{dtgs1-1}, we obtain
\begin{equation*}
    |\partial_{x_i}^lP_1^j(x')|\lesssim \delta(x')^{(j-l)/2}, \quad |\partial_{x_i}^lP_2^j(x')|\lesssim\delta(x')^{(j-l)/2+1},\quad l=0,1.
\end{equation*}
Substituting this into \eqref{gn1} yields
\begin{equation}
    |\partial_{x_i}^j \bar{v}_{1}(x',x_n)|\lesssim\delta(x')^{j/2}.
\end{equation}
The estimates for the remaining derivatives in \eqref{nablagjdsv1} can be derived in a similar manner.
\end{remark}

\begin{remark}
We emphasize that our method can be directly extended to higher dimensions $(n\ge 4)$, giving the bound $ |\nabla^l u|\leq (\varepsilon+|x'|^2)^{-\frac{l+1}{2}}$. We omit the detailed derivation, as it can be readily reconstructed by interested readers.
\end{remark}

\appendix
\section{Proof of Lemma \ref{propnew}}\label{app:w_l}

Let $w_{l}$ be the solution to \eqref{equ_w}. In this section, we prove Lemma \ref{propnew} by estimating the high-order derivatives of $w_l = v_1-\tilde v_l$ in the following narrow region:
\begin{equation}\label{defraz}
    R(a,z'):= \Big\{(x',x_{n})\in\Omega_{\frac{1}{4}}: \phi_{2}(x')<x_{n}<\phi_{1}(x'),~|x'-z'|<a\delta(z') \Big\},\,\,
\mbox{for}~|z'|<\frac14,
\end{equation}
where $a>0$ is a constant. For the simplicity of notation, we write $R(z'):=R(1,z')$.

\begin{lemma}\label{w1infty}
Under the conditions of Proposition \ref{thm:v1est}, the following estimate holds for $z=(z',z_n)\in \Omega_{1/4}$ and $1\le s\le l$: 
	\begin{align}
		&\;\quad \|\nabla^{s}w_{l}\|_{L^{\infty}(R(\frac12,z'))} \notag \\
		&\lesssim \frac{1}{\delta(z')^{s}}\Big(\frac{1}{\delta(z')^{(n-2)/2}}\|\nabla w_{l}\|_{L^{2}(R(z'))}+\sum_{m=0}^{s-1} \delta(z')^{2+m} \|\nabla^{m} f_{l}\|_{L^{\infty}(R(z'))}\!\Big).\label{estmain}
	\end{align}
\end{lemma}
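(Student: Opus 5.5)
The plan is to rescale the thin region $R(z')$ to a domain of unit size and then apply standard local elliptic estimates for the Poisson problem with homogeneous Dirichlet data on the top and bottom boundaries.

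\emph{Step 1: change of variables.} Fix $z'$ with $|z'|<\frac14$ and set $d:=\delta(z')$. For $x\in R(z')$ write
\[
x'-z' = d\,y', \qquad x_n = d\,y_n .
\]
Because of \eqref{djx} and \eqref{zbgj}, for $|x'-z'|<d$ we have $|\delta(x')-d|\lesssim |x'-z'|\,|x'| + |x'-z'|^2 \lesssim d\cdot d^{1/2}+d^2\lesssim d$. More precisely, $\delta(x')\sim d$ on $R(z')$. The rescaled region
\[
Q := \{y : |y'|<1,\; d^{-1}\phi_2(z'+dy') < y_n < d^{-1}\phi_1(z'+dy')\}
\]
therefore has height $\sim 1$ and width $\sim 1$. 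Its top and bottom boundaries are graphs $\psi_j(y') := d^{-1}\phi_j(z'+dy')$. Since $\nabla^m_{y'}\psi_j = d^{m-1}\nabla^m\phi_j$, these satisfy $|\nabla_{y'}\psi_j| \lesssim |x'|\lesssim d^{1/2}$ and $|\nabla^m_{y'}\psi_j|\lesssim d^{m-1}\lesssim 1$ for $m\ge2$. So $Q$ is a uniformly Lipschitz, in fact uniformly $C^{l,\gamma}$, domain, with constants independent of $\varepsilon$ and $z'$. Define $W(y):=w_l(x)$ and $F(y):=d^2 f_l(x)$. By \eqref{equ_w}, $\Delta_y W = F$ in $Q$ and $W=0$ on the top and bottom boundaries.

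\emph{Step 2: local higher-order regularity.} On $Q$ I apply interior-plus-boundary Schauder (or $W^{s,p}$ with $p$ large followed by Sobolev embedding) estimates near the flat-like Dirichlet boundaries, on the smaller set $Q_{1/2}$ corresponding to $R(\frac12,z')$:
\[
\|\nabla^s_y W\|_{L^\infty(Q_{1/2})}\lesssim \|W\|_{L^2(Q)}+\sum_{m=0}^{s-1}\|\nabla^m_y F\|_{L^\infty(Q)} .
\]
Here $\|\nabla^{s-1}F\|_{L^\infty}$ should really be a $C^{s-2,\gamma}$ or $W^{s-1,\infty}$ norm; to stay at integer order I would use $W^{s+1,p}$ estimates with $p>n$. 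Since $W=0$ on the bottom boundary and $Q$ has height $\sim1$, the Poincaré inequality gives $\|W\|_{L^2(Q)}\lesssim\|\nabla_y W\|_{L^2(Q)}$. The constants are uniform because of the uniform control of $\psi_j$ from Step 1.

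\emph{Step 3: scale back.} The chain rule gives $\nabla^s_y W = d^s\nabla^s_x w_l$ and $\nabla^m_y F = d^{2+m}\nabla^m_x f_l$. For the gradient term,
\[
\|\nabla_y W\|_{L^2(Q)} = d\cdot d^{-n/2}\|\nabla w_l\|_{L^2(R(z'))} = d^{-(n-2)/2}\|\nabla w_l\|_{L^2(R(z'))}.
\]
Dividing by $d^s$ yields exactly \eqref{estmain}.

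\emph{Main obstacle.} The main difficulty is the uniformity of the elliptic constants on $Q$. The boundary estimate up to order $s\le l$ needs the graphs $\psi_j$ bounded in $C^{s,\gamma}$ uniformly. The key observation is that the rescaling only improves higher derivatives, since each carries a factor $d^{m-1}\le1$. One must also handle the lateral boundary $|y'|=1$, which is not a Dirichlet boundary. I would deal with it by using a cutoff in $y'$ only, which is why the estimate is taken on the half-size region $R(\frac12,z')$; the top and bottom boundaries are treated with genuine boundary estimates after flattening by $y_n\mapsto (y_n-\psi_2)/(\psi_1-\psi_2)$. This flattening gives an elliptic operator with uniformly $C^{s-1,\gamma}$ coefficients.
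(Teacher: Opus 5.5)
Your proposal is correct and follows the paper's proof essentially step for step. Both use the isotropic rescaling $x'-z'=\delta(z')y'$, $x_n=\delta(z')y_n$ onto a unit-size cylinder and boundary $W^{s+1,p}$ estimates ($p>n$) after flattening the Dirichlet top and bottom boundaries. Both then apply Sobolev embedding and the Poincar\'e inequality to reduce to $\|\nabla_y W\|_{L^2}$, and scale back via $\nabla_y^sW=d^s\nabla^s w_l$, $\nabla_y^mF=d^{2+m}\nabla^m f_l$ and $\|\nabla_yW\|_{L^2(Q)}=d^{-(n-2)/2}\|\nabla w_l\|_{L^2(R(z'))}$. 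Your explicit check that $\nabla^m_{y'}\psi_j=d^{m-1}\nabla^m\phi_j$ stays bounded, so the elliptic constants are uniform in $\varepsilon$ and $z'$, justifies a point the paper only asserts.
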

\begin{proof}
	Similarly as \cite[Step 2 in the proof of Proposition 1.7]{Li20}, we use the change of variables
	\begin{equation*}
		\begin{cases}
			y'=\frac{1}{\delta(z')}(x'-z'),\\
			y_{n}=\frac{x_{n}}{\delta(z')},
		\end{cases}
	\end{equation*} 
	to transform $R(z')$ into a cylinder $Q_{1}$ of unit size, where 
	$$Q_{r}:=\Big\{(y',y_{n})\in\mathbb{R}^{n} : \frac{1}{\delta(z')}\phi_{2}(z'+\delta(z')y')<y_{n}<\frac{1}{\delta(z')}\phi_{1}(z'+\delta(z')y'),\;|y'|<r\Big\},$$ 
  with top and bottom boundaries
	\begin{align*}
    \tilde{\Gamma}_{j}^{r} &= \Big\{(y',y_{n})\in\mathbb{R}^{n}: y_{n}=\tilde{\phi}_{j}(y'):=\frac{1}{\delta(z')}\phi_{j}(z'+\delta(z')y'),\;|y'|<r\Big\}, \quad j=1,2.
  \end{align*}
	For simplicity, we denote 
	$$W(y',y_{n})=w_{l}(z'+\delta(z')y',\delta(z')y_{n})\quad\mbox{and}\quad \tilde{V}_{l}(y',y_{n})=\tilde{v}_{l}(z'+\delta(z')y',\delta(z')y_{n}),$$
	then
	\begin{equation}\label{equ_W}
		\begin{cases}
			\Delta W=F_{l}:=-\Delta \tilde{V}_{l}&\mbox{in}~Q_1,\\
			W=0 &\mbox{on}~\tilde{\Gamma}_{1}^{1}\cup\tilde{\Gamma}_{2}^{1}.
		\end{cases}
	\end{equation}
	Since $\tilde{\phi}_{1}$ and $\tilde{\phi}_{2}$ are smooth, then for any $y_0\in\tilde{\Gamma}_{1}^{r}$ there is a local smooth diffeomorphism that straightens $B_{1/2}(y_0)\cup\tilde{\Gamma}_{1}^{r}$ to a flat boundary. Since $W=0$ on $\tilde{\Gamma}_{1}^{r}$, we have the vanishing boundary value after flatting. So that we can differentiate the equation in the horizon direction and then employ the $W^{2,p}$ estimates for elliptic equations with partially vanishing boundary value (see \cite[Theorem 9.13]{GT}) to obtain high order derivatives estimates: for $k\ge0$, it holds
	\begin{equation}\label{wkp1}
		\|W\|_{W^{k+2,p}(Q_{1/2})}\lesssim \| W\|_{L^{p}(Q_{1})}+\sum_{m=0}^{k}\|\nabla^{m}F_{l}\|_{L^{\infty}(Q_{1})}.
	\end{equation}
	By Sobolev embedding theorem $W^{s+1,p}(Q_{1/2})\hookrightarrow W^{s,\infty}({Q}_{1/2})$ and $W^{1,2}(Q_{1})\hookrightarrow L^p(Q_{1})$ for some $p>n$, $n=2,3$, together with \eqref{wkp1} and the Poincar\'e inequality, we obtain that, for $1 \leq s \leq l$,
	\begin{equation}\label{wkp3}
		\|\nabla^{s}W\|_{L^{\infty}(Q_{1/2})}\lesssim \|W\|_{W^{s+1,p}(Q_{1/2})}  \lesssim \|\nabla W\|_{L^{2}(Q_{1})}+\sum_{m=0}^{s-1}\|\nabla^{m}F_{l}\|_{L^{\infty}(Q_{1})}. 
	\end{equation}
	Rescaling back to the domain $R(z')$, it follows from \eqref{wkp3} that \eqref{estmain} holds. This completes the proof of Lemma \ref{w1infty}. 
\end{proof}


From \eqref{estmain}, the estimate of $\|\nabla^{s}w_{l}\|_{L^{\infty}(R(z'))}$ requires only estimates of the local $L^{2}$-norm $\|\nabla w_{l}\|_{L^{2}(R(z'))}$ and the $L^{\infty}$-norm $\|\nabla^{m}f_{l}\|_{L^{\infty}(R(z'))}$. 
First, we have the following local energy estimates by using the iteration technique.
\begin{lemma}\label{jbnl}
Under the conditions of Proposition \ref{thm:v1est}, the following estimate holds for sufficiently small $0<\varepsilon<1/4$: 
	\begin{align*}
		\int_{R(z')}|\nabla w_{l}|^2 \diff x \lesssim \delta(z')^{2l+n-2}\quad\mbox{for}\,\,\, (z',z_n)\in \Omega_{1/4}.
	\end{align*}
\end{lemma}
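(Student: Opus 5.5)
We will prove the local energy bound with the iteration technique of \cite{Li20,LLY}: a Caccioppoli-type inequality on nested narrow cylinders, iterated about $\delta(z')^{-1}$ times, combined with a global energy bound.

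\textbf{Step 1: global bound.} First we need $\int_{\Omega_{1/2}}|\nabla w_l|^2\lesssim 1$. Write $w_l=v_1-\tilde v_l$ and use that $v_1$ minimizes the Dirichlet energy. Then $\|\nabla v_1\|_{L^2(\Omega)}\lesssim\|\nabla \bar v_1\|_{L^2(\Omega_{1/2})}+1$. The right-hand side is bounded in $n=3$. In $n=2$ it is of size $\varepsilon^{-1/4}$, which is too large. In that case we instead test \eqref{equ_w} against $w_l\eta^2$, with $\eta$ a cutoff in $x'$ supported in $|x'|<1/2$. This gives
\[
\int|\nabla w_l|^2\eta^2\lesssim \int|f_l||w_l|\eta^2+\int w_l^2|\nabla\eta|^2 .
\]
The first term is controlled by the Poincar\'e inequality in $x_n$ (since $w_l=0$ on $\Gamma^\pm_{1/2}$) and by $|f_l|\lesssim\delta^{l-2}$ from \eqref{scfkdgj}. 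The second term is controlled by a bound on $w_l$ near $|x'|\sim 1/2$, which comes from the known $|\nabla(v_1-\bar v_1)|\lesssim1$ together with \eqref{scfkdgj}. Either way, we obtain $\int_{\Omega_{1/2}}|\nabla w_l|^2\lesssim1$.

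\textbf{Step 2: iteration inequality.} For $|z'|<1/4$ and $\delta(z')\le t<s\le c\sqrt{\delta(z')}$ (or $s\lesssim |z'|$ when $|z'|^2\gg\varepsilon$), set
\[
\Omega_t(z')=\{x\in\Omega_{1/2}:|x'-z'|<t\},\qquad F(t)=\int_{\Omega_t(z')}|\nabla w_l|^2 .
\]
Test \eqref{equ_w} with $w_l\eta^2$, where $\eta=1$ on $\Omega_t$, $\eta=0$ outside $\Omega_s$, and $|\nabla\eta|\lesssim (s-t)^{-1}$. Since $w_l$ vanishes on the top and bottom boundaries, the Poincar\'e inequality in $x_n$ on $\Omega_s$ gives $\int_{\Omega_s} w_l^2\lesssim \delta(z')^2\int_{\Omega_s}|\nabla w_l|^2$, because $\delta(x')\sim\delta(z')$ on this range. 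Together with $|f_l|\lesssim\delta(z')^{l-2}$ and $|\Omega_s|\lesssim\delta(z')s^{n-1}$, this yields
\[
F(t)\le \Big(\frac{C_1\delta(z')}{s-t}\Big)^2F(s)+C\,\delta(z')^{2(l-2)}\,\delta(z')^{2}\,\delta(z')\,s^{n-1}.
\]

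\textbf{Step 3: iterate.} Take $t_i=\delta(z')+2C_1 i\,\delta(z')$, so that the contraction factor is $1/4$ at each step. Run $k\sim c/\sqrt{\delta(z')}$ steps, staying inside the region where $\delta(x')\sim\delta(z')$, i.e.\ $|x'-z'|\lesssim\sqrt{\delta(z')}$. This gives
\[
F(t_0)\le 4^{-k}F(t_k)+C\sum_i 4^{-i}\delta(z')^{2l-1}\big((i+1)\delta(z')\big)^{n-1}.
\]
By Step 1, $4^{-k}F(t_k)\lesssim 4^{-c/\sqrt{\delta}}\lesssim\delta(z')^{N}$ for any $N$. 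The sum is $\lesssim\delta(z')^{2l+n-2}$. Since $R(z')\subset\Omega_{t_0}(z')$, the claim follows.

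The main obstacle is the source term. The crude bound $|f_l|\lesssim\delta^{l-2}$ gives exactly the power $\delta^{2l-4+3+(n-1)}=\delta^{2l+n-2}$, provided $\delta(x')\sim\delta(z')$ holds uniformly across all iteration layers, and provided the Poincar\'e constant is applied with the local thickness rather than the thickness at $z'$. Near $|x'|\sim\sqrt\varepsilon$, where $\delta$ transitions from $\varepsilon$ to $|x'|^2$, this comparability must be checked with \eqref{djx}. If the $n=2$ global bound in Step 1 turns out delicate, we will instead start the iteration from a bound $F(t_k)\lesssim\varepsilon^{-1/2}$, which the exponential factor $4^{-k}$ still absorbs.
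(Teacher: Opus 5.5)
Your proof follows the paper's route. You obtain an $\varepsilon$-independent bound on $\int|\nabla w_l|^2$ near the origin by testing \eqref{equ_w} with a cutoff times $w_l$. You then use a Caccioppoli inequality on the narrow cylinders, combined with the Poincar\'e inequality in $x_n$ (constant $\sim\delta(z')^2$). Finally you iterate with radii $t_i=\delta(z')+2C_1 i\,\delta(z')$ over $\sim\delta(z')^{-1/2}$ layers, so that $4^{-k}$ suppresses the global term. The paper bounds $w_l$ in $L^\infty$ directly, from the maximum principle $0\le v_1\le 1$ and \eqref{scfkdgj}. That handles the cutoff term, and together with $\int_{\Omega_{1/2}}\delta(x')^{-1}\,dx\lesssim 1$ also the source term. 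Your Poincar\'e treatment of the source term works equally well.

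Two statements in Step 1 are wrong and should be corrected, though the main line survives.

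\begin{enumerate}
\item $\|\nabla\bar v_1\|_{L^2(\Omega_{1/2})}$ is not bounded for $n=3$. Since $\partial_{x_n}\bar v_1=\delta(x')^{-1}$, its square is $\gtrsim\int_{|x'|<1/2}\delta(x')^{-1}\,dx'\sim|\log\varepsilon|$. So the energy-comparison route fails in three dimensions too, and the $n=3$ branch of Step 1 as written does not give a bound independent of $\varepsilon$. The fix is to use the cutoff-testing argument in both dimensions, as the paper does; nothing in it depends on $n$.
\item The fallback of starting the iteration from $F(t_k)\lesssim\varepsilon^{-1/2}$ is not uniform in $z'$. When $\delta(z')$ is not small (roughly $|z'|\gtrsim|\log\varepsilon|^{-1}$), the number of steps $k\sim\delta(z')^{-1/2}$ is too small for $4^{-k}$ to absorb $\varepsilon^{-1/2}$. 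An $\varepsilon$-independent starting bound is therefore genuinely required.
\end{enumerate}

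A minor remark on Step 2: writing the source term as $\delta(z')^{2l-1}s^{n-1}$ tacitly uses $s-t\lesssim\delta(z')$. This is harmless, since you only apply the inequality with $s-t=2C_1\delta(z')$.
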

\begin{proof}
By the maximum principle, we have $|v_{1}|\lesssim 1$. In view of the definition $\tilde v_l(x',x_n) := \sum_{k=1}^{l} {\bar v}_k(x',x_n) $ and the estimate in \eqref{scfkdgj}, it follows that $\tilde{v}_{l}$ is bounded and therefore $w_{l}=v_1-\tilde v_l$ is also bounded, i.e., $$\|w_{l}\|_{L^{\infty}(\Omega_{1/2})}\lesssim 1 . $$

Let $\xi$ be a smooth function satisfying the following four conditions: 
(i) $\xi(x')=1$ if $|x'|<1/4$, (ii) $\xi(x')=0$ if $|x'|>1/2$, (iii) $0\leqslant\xi(x')\leqslant1$ if $1/4\le |x'|\le 1/2$, (iv) $|\nabla_{x'}\xi(x')|\lesssim1$. 
Testing equation \eqref{equ_w} with $\xi^2 w_l$ leads to the following result: 
\begin{equation*}
     \int_{\Omega_{1/2}}|\nabla w_l|^2\xi^2\leq C\int_{\Omega_{1/2}}w_l(f_l\xi^2+|\nabla_{x'}\xi|^2)\leq C\|w_{l}\|_{L^{\infty}(\Omega_{1/2})}\int_{\Omega_{1/2}}\delta(x')^{-1}\leq C,
\end{equation*}
which implies that
\begin{equation}\label{nw_bdd}
    \int_{\Omega_{1/4}}|\nabla w_l|^2\leq C.
\end{equation}

We adapt the iteration technique developed in \cite{Li20}. For $|z'|\leq 1/4$ and $0<r<s<1/4$, let $\eta$ be a smooth cutoff function satisfying $\eta(x')=1$ if $|x'-z'|<r$, $\eta(x')=0$ if $|x'-z'|>s$, $0\leq\eta(x')\leq1$ if $t\leq|x'-z'|\leq\,s$, and $|\nabla_{x'}\eta(x')|\leq\frac{2}{s-r}$. Multiplying the equation in \eqref{equ_w} by $w\eta^{2}$ and integrating by parts leads  to the following Caccioppoli's type inequality
 	\begin{equation}\label{FsFt11}
		\int_{R(r,z')}|\nabla{w}_{l}|^{2}\leq\,\frac{C}{(s-t)^{2}}\int_{R(s,z')}|w_{l}|^{2}
 		+C(s-t)^{2}\int_{R(s,z')}\left|f_{l}\right|^{2},
 	\end{equation}
    where the narrow region $R(r,z')$ is defined in \eqref{defraz}.
 	 Since $w_l=0$ on $\Gamma_{1/2}^{\pm}$, by using Poincar\'e inequality, we derive
	\begin{align}\label{energy_w_square}
\int_{R(s,z')}|w_{l}|^{2}\lesssim\,\delta(z')^2\int_{R(s,z')}|\nabla{w}_{l}|^{2},\quad 0\le s\lesssim \delta(z')^{1/2}.
	\end{align}
 By using the estimate of $f_l$ in \eqref{scfkdgj}, we have 
 	\begin{align}\label{zydgj1sc}
 	\int_{R(s,z')}|f_{l}(x',x_n)|^2\lesssim s^{n-1}\delta(z')^{2l-3},\quad 0\le s\lesssim \delta(z')^{1/2}.
 \end{align}
 	Substituting \eqref{energy_w_square} and \eqref{zydgj1sc} into \eqref{FsFt11} and denoting $F(r):=\int_{R(r,z')}|\nabla{w}_{l}|^{2},$
 	we have
 	\begin{equation}\label{tildeF111}
 		F(r)\leq\,\left(\frac{C_{0}\delta(z')}{s-r}\right)^{2}F(s)+C(s-r)^{2}s^{n-1}\delta(z')^{2l-3},
 	\end{equation}
 	where $C_0$ is a fixed positive universal constant.
	
 	Let $k=\big({4C_{0}\delta(z')^{1/2}}\big)^{-1}$ and $r_{i}=\delta(z')+2C_{0}i\,\delta(z')$, $i=0,1,2,\cdots,k$. So, applying \eqref{tildeF111} with $s=r_{i+1}$ and $r=r_{i}$ in \eqref{tildeF111}, we have the following iteration formula: 
 	$$F(r_{i})\leq\,\frac{1}{4}F(r_{i+1})+C\delta(z')^{2l+n-2}(i+1)^{n-1}.$$
 	After $k$ iterations, using \eqref{nw_bdd},
 	\begin{eqnarray*}
		F(r_{0}) \leq \Big(\frac{1}{4}\Big)^{k}F(r_{k})+C\delta(z')^{2l+n-2}\sum_{i=1}^{k}\Big(\frac{1}{4}\Big)^{i-1}(i+1)^l\lesssim \delta(z')^{2l+n-2},
	\end{eqnarray*}
for sufficiently small $\varepsilon$. Thus, Lemma \ref{jbnl} is proved.
\end{proof} 

Using Lemma \ref{w1infty} and Lemma \ref{jbnl}, we reduce estimating higher derivatives of $w_l$ to iteratively improving estimates for $f_l$ and its derivatives, which has been proved in \eqref{scxydest2}. This leads to the result of Lemma \ref{propnew} with $1 \le s \le l$. The case $s=0$ follows directly from the mean value theorem. The proof of Lemma \ref{propnew} is completed. \qed

\section{An example of explicit auxiliary functions for high-order derivatives}\label{app:aux}

In narrow regions, the auxiliary functions from Section \ref{sec:aux} admit more explicit constructions. This yields clearer asymptotic formulas for $\nabla^{l} v_1$, where $v_1$ solves \eqref{equ-v1} and shows the optimality of the estimates obtained in Proposition \ref{thm:v1est}. 

Recalling that the vertical distance between $D_1$ and $D_2$ is $\delta(x')=\phi_1(x')-\phi_2(x').$ Define
\begin{equation}\label{kform}
	k(x):=\frac{x_n-\phi_2(x')}{\delta(x')}-\frac{1}{2}=\frac{x_n}{\delta(x')}-\frac{(\phi_1+\phi_2)(x')}{2\delta(x')},
\end{equation}
and 
\begin{equation*}
	\begin{aligned}
		k(x)^2-\frac14
		&=\frac{1}{\delta(x')^2}\Big(x_n^2-(\phi_1+\phi_2)(x')x_n+\frac14((\phi_1+\phi_2)(x')^2-
		\delta(x')^2)\Big) \\
        &:=\frac{1}{\delta(x')^2}(x_n^2+\mathfrak{h}_{1}(x')x_n+
		\mathfrak{h}_{0}(x') ).
	\end{aligned}
\end{equation*}
One can see that $k(x)=\frac{1}{2}$ on $\Gamma_{1/2}^+$ and $k(x)=-\frac{1}{2}$ on $\Gamma_{1/2}^-$, and so
\begin{equation*}
	k(x)+\frac12=1\,\text{on}\,~\Gamma_{1/2}^+,\quad k(x)+\frac12=0\,~\text{on}\,
	\Gamma_{1/2}^-, \quad \text{and}~~k(x)^2-\frac14=0\,~\text{on}\,\Gamma_{1/2}^+\cup\Gamma_{1/2}^-.
\end{equation*}

Under the assumptions of Proposition \ref{thm:v1est}, for sufficiently small $0 < \varepsilon < 1/2$ and any $l \geq 1$, there exists a polynomial $\tilde{v}_l(x',x_n):=\sum_{k=1}^{l}\bar{v}_k(x',x_n)$ of order $2l-1$ such that
\begin{align}\label{ys1xz}
	|\nabla^{l} \big(v_1(x',x_n)-\tilde{v}_l(x',x_n)\big)|\leq\,C\quad\mbox{in}~\Omega_{1/4},
\end{align}
where 
\[\bar{v}_{1}(x',x_n):=k(x)+\frac{1}{2}=\frac{x_n-\phi_2(x')}{\delta(x')}.\]
Instead of $\eqref{eq:vk}$, which was defined via the Green function $\eqref{defgreenfunction3d}$ earlier, $\bar{v}_k$ are now defined inductively by
\begin{align}\label{vkdgzxz}
	\bar{v}_k(x',x_n)=\sum_{i=0}^{2k-3}\frac{\mathcal{P}_{k,i}(x')}{\delta(x')^2} x_n^{i}(x_n^2+\mathfrak{h}_{1}(x')x_n+
	\mathfrak{h}_{0}(x') ),\quad k\geq2.
\end{align}
Here $\mathcal{P}_{2,1}(x')=-\frac{1}{6}\delta(x')^2\Delta_{x'}\frac{1}{\delta(x')},$ $\mathcal{P}_{2,0}(x')=\frac{1}{2}\delta(x')^2\Delta_{x'}\frac{\phi_2(x')}{\delta(x')}-\mathfrak{h}_{1}\mathcal{P}_{2,1}(x')$, and for $k\geq3$, $0\leq\,i\leq\,2k-3$,
\begin{align}\label{fz2?xz}
	\mathcal{P}_{k,i}(x')=\,\frac{-1}{(i+1)(i+2)}\delta(x')^2\Delta_{x'}\Big(\frac{\mathcal{P}_{k-1,i-2}(x')}{\delta(x')^2}&+\mathfrak{h}_{1}\frac{\mathcal{P}_{k-1,i-1}(x')}{\delta(x')^2}+\mathfrak{h}_{0}\frac{\mathcal{P}_{k-1,i}(x')}{\delta(x')^2}\Big)\nonumber\\
	&-\mathfrak{h}_{1}	\mathcal{P}_{k,i+1}(x')-\mathfrak{h}_{0}	\mathcal{P}_{k,i+2}(x').
\end{align}
We set $\mathcal{P}_{k,i}(x')\equiv0$ for $i\notin\{1,2,\cdots,2k-3\}$.

For some special domains, the auxiliary functions admit a much simpler expression. Suppose $\phi_1(x')$ and $\phi_2(x')$ are quadratic and symmetric about ${x_n = 0}$ for $|x'| \leq 2R$, with $\phi_1(x') = \frac{\varepsilon}{2} + \frac{1}{2}|x'|^2$ and $\phi_2(x') = -\frac{\varepsilon}{2} - \frac{1}{2}|x'|^2$. The vertical distance between $D_1$ and $D_2$ is then $\delta(x') = \varepsilon + |x'|^2$.

Under the assumptions of Proposition \ref{thm:v1est}, for sufficiently small $0 < \varepsilon < 1/2$ and any $l \geq 1$, there exists a polynomial $\tilde{v}_l(x',x_n):=\sum_{k=1}^{l}\bar{v}_k(x',x_n)$ of order $2l-1$ such that
  \begin{align}\label{ys1}
  |\nabla^{l} \big(v_1(x',x_n)-\tilde{v}_l(x',x_n)\big)|\leq\,C\quad\mbox{in}~\Omega_{1/4},
  \end{align}
  where 
  $\bar{v}_{1}(x',x_n):=\frac{x_n}{\delta(x')}+\frac{1}{2}$, instead of \eqref{eq:vk} defined by Green function \eqref{defgreenfunction3d} before, $\bar{v}_k$ are defined inductively by 
  \begin{align}\label{vkdgz}
  \bar{v}_k(x',x_n)=\sum_{i=1}^{k-1} \mathcal{P}_{k,i}(x')x_n^{2k-2i-1}\Big(x_n^2-\frac{1}{4}\delta(x')^2\Big),\quad k\geq2.
  \end{align}
  Here $\mathcal{P}_{2,1}(x')=\frac{1}{3}\Big(\frac{n-1}{\delta(x')^2}-\frac{4|x'|^2}{\delta(x')^{3}}\Big),$ and for $k\geq3$, $1\leq\,i\leq\,k-1$,
  \begin{align*}
  	\mathcal{P}_{k,i}(x')=& \frac{1}{a_{k,i}(a_{k,i}+1)}\Delta_{x'}\Big(\frac{1}{4}\mathcal{P}_{k-1,i-1}(x')\delta(x')^2\!-\mathcal{P}_{k-1,i}(x')\Big)+\frac{1}{4}\mathcal{P}_{k,i-1}(x')\delta(x')^2 ,
  \end{align*}
  with $a_{k,i}=2(k-i)$. We set $\mathcal{P}_{k,i}(x')\equiv0$ for $i\notin\{1,2,\cdots,k-1\}$. Namely, for instance, 
\begin{align*}
\bar{v}_{2}(x',x_n)&=\frac{x_{n}}{3\delta(x')}\Big((n-1)\delta(x')-4|x'|^2\Big)\Big(\frac{x_{n}^{2}}{\delta(x')^{2}}-\frac{1}{4}\Big),\\
\bar{v}_{3}(x',x_n)&=\bigg[\frac{(n^2-1)}{15}\frac{x_{n}^{3}}{\delta(x')}-\Big(2(n+1)\frac{|x'|^2x_{n}^{2}}{\delta(x')^{2}}+\frac{7n+3}{18}|x'|^2-\frac{n^2-1}{60}\delta(x')\Big)x_n\\
&\hspace{4cm} -\frac{4}{5}\Big(\frac{16x_{n}^{2}}{\delta(x')^{2}}+\frac{41}{9}\Big)\frac{x_{n}|x'|^4}{\delta(x')}\bigg]\bigg(\frac{x_{n}^{2}}{\delta(x')^{2}}-\frac{1}{4}\bigg).
\end{align*}

It is easy to check that for $l=1$, we have $\nabla v_{1}=\nabla\bar{v}_{1}+O(1)$, where
\begin{align*}
	|\bar{v}_1|\leq C,\quad|\partial_{x_i}\bar{v}_1|\leq C\delta(x')^{-\frac12},\,~i=1,2,\dots,n-1,\quad\partial_{x_n}\bar{v}_1=\delta(x')^{-1}.
\end{align*}
So that $|\nabla v_1|\leq\,C\delta(x')^{-1}$. By virtue of \eqref{ys1}, 
$$|\nabla v_1|\geq|\nabla \bar{v}_1|-C\geq|\partial_{x_n} \bar{v}_1|-C\ge(C\delta(x'))^{-1},$$ 
implying the optimality for $l=1$. In fact, all the estimates in Proposition \ref{thm:v1est} are optimal.

For $l=2$, we have $\nabla^{2}v_{1}=\nabla^{2}(\bar{v}_1+\bar{v}_2)+O(1)$, where 
$$
|\bar{v}_2|\leq C\delta(x'), \; 
|\nabla\tilde{v}_{2}|=|\nabla(\bar{v}_1+\bar{v}_2)|\leq  C\delta(x')^{-1}, \;
|\nabla^{2}\tilde{v}_{2}|=\Big|\nabla^2\big(\bar{v}_1+\bar{v}_2\big)\Big|\leq C\delta(x')^{-3/2}.
$$
So that $|\nabla^{2} v_1|\leq\,C\delta(x')^{-3/2}$.
This leading terms in $\nabla^{2}\tilde{v}_{2}$ are $\partial_{x_j x_n} \bar{v}_1 = -\frac{2x_j}{\delta(x')^2}$, $j=1,2,\dots,n-1.$ By \eqref{ys1}, 
$$|\nabla^{2} v_1|\geq|\nabla^2 \tilde{v}_{2}|-C\ge\frac{2|x'|}{\delta(x')^2}-C\ge\frac{1}{C|x'|^3}, \quad\text{if } |x'|\ge\sqrt{\varepsilon}.$$

For $l=3$,  we have $\nabla^{3}v_{1}=\nabla^{3}\tilde{v}_3+O(1)=\nabla^{3}(\bar{v}_1+\bar{v}_2+\bar{v}_3)+O(1)$, where
$$|\bar{v}_3|\leq C\delta(x')^{2},\quad|\nabla\tilde{v}_{3}|\leq  C\delta(x')^{-1},\quad|\nabla^2\tilde{v}_{3}|\leq  C\delta(x')^{-3/2}, $$
and 
\begin{align*}
\Big|\nabla^3 \tilde{v}_{3}\Big|&\leq|\nabla^3\bar{v}_1|+|\nabla^3\bar{v}_2| +|\nabla^3\bar{v}_3| \leq C\delta(x')^{-2}+C\delta(x')^{-2}+C\delta(x')^{-1}\leq C\delta(x')^{-2}.
\end{align*}
Leading $\delta(x')^{-2}$-order terms include $\partial_{x_ix_jx_n}\bar{v}_1=\frac{8x_ix_j}{\delta(x')^3}-\frac{2\delta_{ij}}{\delta(x')^2},$ $i,j=1,2,\dots,n-1,$ and $\partial^{3}_{x_n}\bar{v}_2(x',x_n)=\frac{1}{3\delta(x')^3}\Big((n-1)\delta(x')-4|x'|^2\Big)$. Thus, 
$$|\nabla^3\tilde{v}_{3}|\ge \frac{1}{C}\sum_{i=1}^{n-1}|
\partial_{x_ix_ix_n}\bar{v}_1|\ge\frac{8|x'|^2-2(n-1)\delta(x')}{C\delta(x')^3}\ge \frac{1}{C|x'|^4},$$ if $|x'|\ge2\sqrt{\varepsilon}$, implying the optimality of the upper bound.

By induction, auxiliary functions for $l \geq 4$ similarly isolate all singular terms in $\nabla^l v_1$ up to $O(1)$.

\bibliographystyle{plain}
\bibliography{references}

\end{document}